\title{The Genus-One Global Mirror Theorem for the Quintic Threefold}
\author{Shuai Guo}
\address{\newline School of Mathematics Science\newline Peking University
 \newline No 5. Yiheyuan Road\newline  Beijing 100871 China}
\email{guoshuai@math.pku.edu.cn}
\author{Dustin Ross}
\address{\newline Department of Mathematics\newline San Francisco State University\newline Thornton Hall 941\newline 1600 Holloway Avenue\newline San Francisco, California 94132 USA}
\email{rossd@sfsu.edu}
\newcommand{\bP}{\mathbb{P}}
\newcommand{\D}{\mathcal{D}}
\newcommand{\oD}{\overline{\mathcal{D}}}
\newcommand{\M}{\overline{\mathcal{M}}}
\newcommand{\cO}{\mathcal{O}}
\newcommand{\bC}{\mathbb{C}}
\newcommand{\B}{\mathcal{B}}
\renewcommand{\L}{\mathcal{L}}
\newcommand{\vir}{\mathrm{vir}}
\newcommand{\val}{\mathrm{val}}
\newcommand{\Aut}{\mathrm{Aut}}
\newcommand{\Contr}{\mathrm{Contr}}
\newcommand{\ev}{\mathrm{ev}}
\newcommand{\bt}{\mathbf{t}}
\renewcommand{\ev}{\mathrm{ev}}
\newcommand{\Res}{\mathrm{Res}}
\newcommand{\re}{\mathrm{e}}
\newcommand{\ri}{\mathrm{i}}
\newcommand{\diag}{\mathrm{diag}}
\newcommand{\loc}{\mathrm{loc}}
\newcommand{\I}{I}
\newcommand{\U}{\mathbb{U}}
\newcommand{\bq}{\mathbf{q}}
\newcommand{\bp}{\mathbf{p}}
\renewcommand{\cH}{\mathcal{H}}
\newcommand{\ft}{\mathfrak{t}}
\newcommand{\fq}{\mathfrak{q}}
\newcommand{\oF}{\overline{F}}
\newcommand{\oJ}{\overline{J}}
\newcommand{\oV}{\overline{V}}
\newtheorem{dummy}{}[section]
\newtheorem{lemma}[dummy]{Lemma}
\newtheorem{proposition}[dummy]{Proposition}
\newtheorem{theorem}[dummy]{Theorem}
\newtheorem{corollary}[dummy]{Corollary}
\newtheorem{conjecture}[dummy]{Conjecture}
\newtheorem*{mainresult}{Main Result}
\theoremstyle{definition}
\newtheorem{remark}[dummy]{Remark}
\begin{document}

\maketitle
\begin{abstract}
We prove the genus-one restriction of the all-genus Landau-Ginzburg/Calabi-Yau conjecture of Chiodo and Ruan, stated in terms of the geometric quantization of an explicit symplectomorphism determined by genus-zero invariants. This gives the first evidence supporting the higher-genus Landau-Ginzburg/Calabi-Yau correspondence for the quintic threefold, and exhibits the first instance of the ``genus zero controls higher genus'' principle, in the sense of Givental's quantization formalism, for non-semisimple cohomological field theories.
\end{abstract}

\section{Introduction}

Over the last twenty-five years, there have been a number of important developments that have advanced our understanding of Gromov-Witten theory. Among these results, the genus-zero mirror theorems have provided closed formulas for the genus-zero Gromov-Witten potentials of a large number of target geometries \cite{GiventalMirror,LLY,Bertram,CFK2}, and Teleman's classification theorem for semisimple cohomological field theories \cite{Teleman} has led to explicit formulas for all-genus partition functions in terms of Givental's quantization formula \cite{GiventalHamiltonians,GiventalSemisimple}. One of the most important remaining open problems is to understand the all-genus partition functions of non-semisimple cohomological field theories, for which the Gromov-Witten theory of the quintic threefold $X:=V(W=x_0^5+\dots+x_4^5)\subseteq\bP^4$ is the prototypical example.

The Landau-Ginzburg/Calabi-Yau correspondence, which first arose in the study of string theory \cite{GVW,VW,Martinec}, suggests an equivalence between the Gromov-Witten theory of a Calabi-Yau hypersurface and the Landau-Ginzburg model of the defining equation of the hypersurface. The latter model is now mathematically understood in terms of Fan-Jarvis-Ruan-Witten invariants. In the case of the quintic threefold, Chiodo and Ruan proved that the genus-zero Gromov-Witten theory if $X$ can be identified with the genus-zero Fan-Jarvis-Ruan-Witten theory of the polynomial $W$ after analytic continuation and an explicit linear symplectic transformation $\U$ \cite{CR}. 

Motivated by Givental's quantization formula, Chiodo and Ruan suggested that the geometric quantization $\widehat{\U}$, which is an explicit differential operator constructed from $\U$, should identify the higher-genus Gromov-Witten and Fan-Jarvis-Ruan-Witten partition functions after analytic continuation. The genus-zero restriction of their quantization conjecture follows from the fact that $\U$ identifies the genus-zero theories. The main result of this work is the genus-one verification of Chiodo and Ruan's all-genus Landau-Ginzburg/Calabi-Yau conjecture.

\begin{mainresult}[Theorem \ref{thm:ellipticquantization}]
The genus-one potential determined by the action of $\widehat\U$ on the Fan-Jarvis-Ruan-Witten partition function of $W=x_0^5+\dots+x_4^5$ is equal to the analytic continuation of the genus-one Gromov-Witten potential of the quintic threefold $X=V(W)$.
\end{mainresult}

The theorem is significant for several reasons. First of all, it provides the first evidence for the higher-genus Landau-Ginzburg/Calabi-Yau correspondence. It has been suggested that the Landau-Ginzburg model could be instrumental in computing higher-genus Gromov-Witten invariants of the quintic threefold, and this theorem provides validity to that approach. Secondly, the theorem gives evidence for a general ``genus zero controls higher genus'' principle, in the sense of Givental, in which a correspondence between all-genus partition functions is determined by a genus-zero correspondence through an explicit quantization procedure. While such a principle has been studied extensively and proved in many cases for semisimple cohomological field theories, for example \cite{Teleman, GiventalSemisimple, BCR, Zong, CoatesIritani, HLSW, IMR}, this is the first significant evidence for such a principle in the non-semisimple case.

\subsection{Plan of the Paper}

We begin in Section \ref{sec:recap} by recalling the basic definitions in Gromov-Witten and Fan-Jarvis-Ruan-Witten theory. We recall some previously known results, including the genus-zero mirror theorems, the genus-zero Landau-Ginzburg/Calabi-Yau correspondence, and the genus-one mirror theorems. In Section \ref{sec:quantization}, we discuss the Birkhoff factorization of the symplectomorphism $\U$ and recall Givental's quantization formulas in order to make Theorem \ref{thm:ellipticquantization} precise. We also apply the string and dilaton equations to reduce the main theorem to the one parameter `small state-space'. In Section \ref{sec:genuszero}, we provide a proof of the genus-zero restriction of the quantization conjecture, mostly in order to set up notation for the genus-one correspondence. The proof of the genus-one correspondence occupies Sections \ref{sec:genusone}, \ref{sec:twisted}, and \ref{sec:loop2}, where we carefully analyze the vertex- and loop-type graphs that appear in the quantization formula. 

\subsection{Acknowledgements}

The authors are grateful to Y. Ruan for his support and encouragement. The second author would like to thank E. Clader for many valuable conversations related to this work. The first author is partially supported by the NSFC grants 11431001 and 11501013. The second author has been supported by the NSF postdoctoral research fellowship DMS-1401873.

\section{Recapitulation of Global Mirror Symmetry for the Quintic Threefold}\label{sec:recap}

In this section, we review the basic setup of Gromov-Witten and Fan-Jarvis-Ruan-Witten invariants, and we recall previously known mirror theorems concerning the genus-zero and genus-one invariants.

\subsection{Review of Gromov-Witten Theory}

Let $X$ denote the Fermat quintic threefold:
\[
X:=V(x_0^5+\cdots +x_4^5)\subset \mathbb{P}^4,
\]
and let $\M_{g,n}(X,d)$ denote the moduli space of $n$-pointed, genus-$g$, degree-$d$ stable maps to $X$. Gromov-Witten (GW) invariants of $X$ encode virtual intersection numbers
\begin{equation}\label{eq:gwcorrelators}
\left\langle\alpha_1\psi^{k_1}\cdots\alpha_n\psi^{k_n} \right\rangle^{CY}_{g,n,d}:=\int_{\left[ \M_{g,n}(X,d)\right]^{\vir}}\prod_{i=1}^n\ev_i^*(\alpha_i)\psi_i^{k_i}
\end{equation}
where $\alpha_i\in H^*(X,\bC)$, $\ev_i:\M_{g,n}(X,d)\rightarrow X$, is the $i$th evaluation map, $\psi_i$ is the descendent cotangent-line class, and $[-]^\vir$ is the virtual fundamental class. The correlators defined in \eqref{eq:gwcorrelators} are multilinear and symmetric. For the purposes of this paper, we focus on the \emph{ambient sector} $H^{CY}\subset H^*(X,\bC)$ of the state-space, obtained by restricting the cohomological insertions to the image of the restriction map: 
\[
H^{CY}:=\mathrm{Im}\left(H^*(\bP^4,\bC)\rightarrow H^*(X,\bC)\right).
\] 
A natural basis for $H^{CY}$ is given by $\{\varphi_0,\dots,\varphi_3\}$, where $\varphi_m$ is the pullback of $c_1(\cO(1))^m$ under the inclusion $X\hookrightarrow\bP^4$. The genus-$g$ GW potential is defined by
\[
F_g^{CY}(\bt):=\sum_{n,d}\frac{1}{n!}\left\langle\bt(\psi)^n \right\rangle^{CY}_{g,n,d},
\]
where 
\[
\bt(z)=\sum_{k\geq 0 \atop 0\leq m\leq 3} t_{k}^m\varphi_mz^k.
\]
We view the set of variables $\bt=\{t_k^m\}$ as formal parameters,\footnote{Typically, one introduces an additional Novikov parameter to keep track of the degree $d$. However, the divisor equation implies that the Novikov parameter and $t_0^1$ are redundant, allowing us to omit the Novikov parameter in our discussion.} and we write $\bt(z)$ when we want to emphasize the role of $z$. The sum is taken over all indices for which the underlying moduli space is nonempty. The GW partition function is defined by
\[
\D^{CY}(\bt,\hbar)=\exp\left(\sum_{g\geq 0}\hbar^{g-1} F_g^{CY}(\bt) \right).
\]

Following Givental \cite{GiventalSymplectic}, we define an infinite-dimensional vector space
\[
\cH^{CY}:=H^{CY}((z^{-1}))
\]
with symplectic form
\[
\Omega^{CY}(f(z),g(z))=\Res_{z=0}(f(z),g(-z))^{CY},
\]
where $(-,-)^{CY}$ denotes the Poincar\'e pairing on $X$. Let $(\bq,\bp)$ be the Darboux coordinates on $\cH^{CY}$ with respect to the basis $\varphi_mz^k$, so that a general element of $\cH^{CY}$ can be written
\[
\sum_{k\geq 0 \atop 0\leq m\leq 3} q_k^m\varphi_mz^k+\sum_{k\geq 0 \atop 0\leq m\leq 3}p^{m,k}\varphi^m(-z)^{-k-1},
\]
where $\varphi^m$ is Poincar\'e dual to $\varphi_m$. Viewing $F_0(\bt)$ as a formal function on $\cH_+^{CY}:=H^{CY}[z]$ via the dilaton shift:
\[
\bt(z)=\bq(z)+\varphi_0z,
\]
the genus-zero GW invariants are encoded in a Lagrangian subspace $\L^{CY}$, defined as the graph of the differential of $F^{CY}_0$:
\[
\L^{CY}:=\left\{p^{m,k}=\frac{\partial F^{CY}_0(\bt)}{\partial q_k^m} \right\}\subset \cH^{CY}.
\]
A general point of $\L^{CY}$ has the form
\[
J^{CY}(\bt,-z):=-z\varphi_0+\bt(z)+\sum_{n,d,m}\frac{1}{n!}\left\langle\bt(\psi)^n\;\frac{\varphi_m}{-z-\psi}\right\rangle_{0,n,d}^{CY}\varphi^m
\]
Givental proved that $\L^{CY}$ is a cone centered at the origin, and that every tangent space $T$ is tangent to $\L^{CY}$ exactly along $zT$. In particular, $\L^{CY}$ (and hence, the totality of genus-zero GW invariants) is determined by the finite-dimensional slice
\[
J^{CY}(t,-z)=-z\varphi_0 +t+\sum_{n,d,m}\frac{1}{n!}\left\langle t^n\;\frac{\varphi_m}{-z-\psi}\right\rangle_{0,n,d}^{CY}\varphi^m
\]
where $t=\sum_{0\leq m\leq 3}t^m\varphi_m$. The properties of the cone imply that
\[
\L^{CY}=\left\{\sum_r c_r(t,z)S^{CY}(t,z)^*(\varphi_r)\;:\; c_r(t,z)\in\cH_+^{CY} \right\},
\]
where\footnote{The asterisk in the notation refers to the fact that $S^{CY}(t,z)^*$ is the adjoint of a fundamental solution of the Dubrovin connection.}
\[
S^{CY}(t,z)^*(\varphi_r)=\frac{\partial J^{CY}(t,-z)}{\partial t^r}=\varphi_r+\sum_{n,d,m}\frac{1}{n!}\left\langle\phi_r\; t^n\;\frac{\varphi_m}{-z-\psi}\right\rangle^{CY}_{0,n,d}\varphi^m.
\]

In the particular case of the quintic threefold $X$, even more is true. It follows from dimension arguments along with the string and dilaton equations that $\L^{CY}$ is, in fact, determined by the one-dimensional slice along the \emph{small state-space} $t=\tau\varphi_1$:
\[
J^{CY}(\tau\varphi_1,-z)=-z\varphi_0 +\tau\varphi_1+\sum_{n,d,m}\frac{1}{n!}\left\langle (\tau\varphi_1)^n\;\frac{\varphi_m}{-z-\psi}\right\rangle_{0,n+1,d}^{CY}\varphi^m.
\]
By a slight abuse of notation, we often drop $\varphi_1$ in the notation when we restrict to the small state-space: $J^{CY}(\tau,-z):=J^{CY}(\tau\varphi_1,-z)$.

\subsection{Review of Fan-Jarvis-Ruan-Witten Theory}

Let $\M_{g,\vec m}^{1/5}$ denote the moduli space of stable $5$-spin curves with $n$ orbifold marked points having multiplicities $\vec m = (m_1,\dots,m_n)$. More precisely, a point in $\M_{g,\vec m}^{1/5}$ parametrizes a tuple $(C,q_1,\dots,q_n,L,\kappa)$ where
\begin{itemize}
\item $(C,q_1,\dots,q_n)$ is a stable orbifold curve with $\mu_5$ orbifold structure at all marks and nodes;
\item $L$ is an orbifold line bundle on $C$ and the $\mu_5$-representation $L|_{q_i}$ is multiplication by $\re^{2\pi\ri m_i/5}$;
\item $\kappa$ is an isomorphism
\[
\kappa:L^{\otimes 5}\cong\omega_{C,\log}.
\]
\end{itemize}

The (narrow) Fan-Jarvis-Ruan-Witten (FJRW) invariants of the quintic threefold encode the intersection numbers
\begin{equation}\label{eq:lgcorrelators}
\left\langle\phi_{m_1}\psi^{k_1}\cdots\phi_{m_n}\psi^{k_n} \right\rangle_{g,n}^{LG}:=5^{2-2g}\int_{\left[ \M_{g,\vec m+\vec 1}^{1/5}\right]^{\vir}}\prod_{i=1}^n\psi_i^{k_i},
\end{equation}
where $\psi_i$ is the $i$th cotangent line class on the coarse curve, and $[-]^\vir$ is the fifth power of the Witten class associated to the quintic threefold\footnote{The sign convention we use for the Witten class agrees with the original construction of Fan--Jarvis--Ruan \cite{FJR}.}. By convention, the correlators \eqref{eq:lgcorrelators} vanish if $m_i=4$ for any $i$. We let $H^{LG}$ denote the \emph{narrow} state-space, which is the complex vector-space generated by the formal symbols $\phi_0,\dots,\phi_3$ and with a non-degenerate pairing defined by $(\phi_i,\phi_j)^{LG}=5\delta_{i+j= 3}$\footnote{This pairing is different from the standard pairing in FJRW theory that was defined in \cite{FJR}, but it is consistent with our previous work \cite{GR} and matches better with the pairing in GW theory}. 

Analogously to GW theory, we define formal generating series $F_g^{LG}(\bt)$ and $\D^{LG}(\bt)$, we define a vector-space $\cH^{LG}$ with symplectic form $\Omega^{LG}$, and we define a Lagrangian subspace $\L^{LG}\subset\cH^{LG}$ which is determined by the slice $J^{LG}(t,-z)$ via the derivatives $S^{LG}(t,z)^*$. As in GW theory, the totality of genus-zero FJRW invariants are determined by the one-dimensional slice
\[
J^{LG}(\tau,-z)=-z\phi_0 +\tau\phi_1+\sum_{n,d,m}\frac{1}{n!}\left\langle (\tau\phi_1)^n\;\frac{\phi_m}{-z-\psi}\right\rangle_{0,n,d}^{LG}\phi^m.
\]

\subsection{Genus-Zero Mirror Theorems and the Landau-Ginzburg/Calabi-Yau Correspondence}

Define $I$-functions $I^{CY}(\fq,z)\in\cH^{CY}$ and $I^{LG}(\ft,z)\in\cH^{LG}$ by
\[
I^{CY}(\fq,z):=z\sum_{d\geq 0}\fq^{\varphi_1/z+d}\frac{\prod_{k=1}^{5d}(5\varphi_1+kz)}{\prod_{k=1}^d(\varphi_1+kz)^5},
\]
where $\varphi_1^k:=\varphi_k$ and $\varphi_4=0$, and
\[
I^{LG}(\ft,z):=z\sum_{a\geq 0}\frac{\ft^{a}}{z^aa!}\prod_{0< k<\frac{a+1}{5} \atop \langle k \rangle = \langle \frac{a+1}{5}\rangle}(kz)^5\phi_a,
\]
where $\phi_4=0$.\footnote{We warn the reader that the LG $I$-function defined here differs from the $I$-function defined in \cite{CR} by a factor of $\ft$. This keeps the notation consistent with our previous work \cite{GR}.}

The leading $z$-coefficients of the $I$-functions are especially important:
\[
I^{CY}(\fq,z)=:I_0^{CY}(\fq)\varphi_0z+I_1^{CY}(\fq)\varphi_1+\cO(z^{-1})
\]
and
\[
I^{LG}(\ft,z)=:I_0^{LG}(\ft)\phi_0z+I_1^{LG}(\ft)\phi_1+\cO(z^{-1}).
\]

The genus-zero mirror theorems, conjectured by Candelas--de la Ossa--Green--Parkes \cite{CdlOGP} in the GW setting and Huang--Klemm--Quackenbush \cite{HKQ} in the FJRW setting, provide an explicit solution to genus-zero GW and FJRW invariants in terms of the respective I-functions.

\begin{theorem}[Givental \cite{GiventalMirror}, Lian--Liu--Yau \cite{LLY}]\label{thm:cymirror}
Setting
\[
\tau^{CY}=\frac{I^{CY}_1(\fq)}{I^{CY}_0(\fq)},
\]
we have
\[
J^{CY}(\tau^{CY},z)=\frac{I^{CY}(\fq,z)}{I^{CY}_0(\fq)}.
\]
\end{theorem}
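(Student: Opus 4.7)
The plan is to prove the statement by showing (i) that the $I$-function $I^{CY}(\fq, z)$ lies on the Lagrangian cone $\L^{CY}$, and (ii) that, after normalization by $I^{CY}_0(\fq)$, its leading $z$-asymptotics uniquely identify it with the small $J$-function at the specified parameter.

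For step (i), I would pass through the twisted Gromov--Witten theory of the ambient toric variety $\bP^4$, where the virtual class is capped with the equivariant Euler class of $R\pi_* f^*\cO(5)$, with $\pi$ the universal curve and $f$ the universal stable map to $\bP^4$. On one side, quantum Lefschetz --- as proved for convex complete intersections by Givental and Lian--Liu--Yau, or as a consequence of Coates--Givental's quantum Riemann--Roch theorem --- identifies the relevant slice of the twisted Lagrangian cone on $\bP^4$, after passage to the non-equivariant limit, with the ambient-sector of $\L^{CY}$. On the other side, the small $J$-function of $\bP^4$ can be computed explicitly by Atiyah--Bott localization, and the hypergeometric modification produced by the quantum Riemann--Roch transformation inserts the factor $\prod_{k=1}^{5d}(5\varphi_1+kz)$ into each degree-$d$ summand, yielding precisely $I^{CY}(\fq,z)$. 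Combining these two inputs places $I^{CY}(\fq, z)$ on $\L^{CY}$.

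For step (ii), the Lagrangian cone $\L^{CY}$ is centered at the origin, hence closed under multiplication by any unit in the base parameter $\fq$; thus $I^{CY}(\fq,z)/I^{CY}_0(\fq)\in\L^{CY}$. The ruling property recalled in the excerpt --- every tangent space $T$ is tangent to $\L^{CY}$ exactly along $zT$ --- implies that a point of $\L^{CY}$ whose polynomial-in-$z$ part has the form $z\varphi_0 + t$ for some $t\in H^{CY}$ is uniquely determined and equals $J^{CY}(t,z)$. Expanding
\[
\frac{I^{CY}(\fq, z)}{I^{CY}_0(\fq)} = z\varphi_0 + \frac{I^{CY}_1(\fq)}{I^{CY}_0(\fq)}\varphi_1 + O(z^{-1}),
\]
reading off the coefficient of $z^0$ as $\tau^{CY}$, and invoking the uniqueness statement gives the equality $J^{CY}(\tau^{CY},z) = I^{CY}(\fq,z)/I^{CY}_0(\fq)$.

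The main obstacle is step (i): verifying that the explicit hypergeometric modification of the toric $J$-function agrees on the nose with a slice of the quintic's cone requires the full strength of quantum Lefschetz, including a careful analysis of localization contributions in the twisted theory and a delicate passage to the non-equivariant limit. Once step (i) is in hand, step (ii) is essentially formal, amounting to the uniqueness of the small $J$-function among cone points with prescribed polynomial part in $z$ and the observation that the hypergeometric prefactor contributes no $z$-polynomial terms beyond $z\varphi_0$ and a scalar multiple of $\varphi_1$.
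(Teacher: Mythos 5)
This theorem is quoted in the paper as an external result of Givental and Lian--Liu--Yau; the paper supplies no proof of its own, so there is nothing internal to compare against. Your outline is a sound sketch of the by-now-standard route to the statement: step (ii) is indeed formal --- the cone is closed under the scalar rescaling by $1/I_0^{CY}(\fq)$, the $z$-polynomial part of $I^{CY}/I_0^{CY}$ is exactly $z\varphi_0+\tau^{CY}\varphi_1$ (the degree-$d\ge 1$ summands contribute only non-positive powers of $z$, and $\varphi_1^4=0$ on the ambient sector), and since $\L^{CY}$ is the graph of $dF_0^{CY}$ over $\cH_+^{CY}$, a cone point is determined by that polynomial part, forcing it to equal $J^{CY}(\tau^{CY},z)$. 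Step (i) is where all the content lives, and as you acknowledge, your argument there is a reduction to quantum Lefschetz plus the localization computation of the small $J$-function of $\bP^4$ rather than a proof; that reduction is legitimate (this is essentially the Coates--Givental repackaging), but you should be aware that it is not how the cited authors actually argue --- Givental's original proof characterizes points of the cone via fixed-point recursion relations and polynomiality, and Lian--Liu--Yau use functorial localization --- so attributing the quantum-Lefschetz route to those references is slightly anachronistic. One further point worth making explicit in step (i): the non-equivariant limit of the twisted theory recovers invariants of the quintic only on the ambient sector $H^{CY}$ and only in genus zero (via the functoriality $[\M_{0,n}(X,d)]^{\vir}$ versus $e(R\pi_*f^*\cO(5))\cap[\M_{0,n}(\bP^4,d)]^{\vir}$, which uses convexity of $\cO(5)$ in genus zero); this is exactly why the theorem is stated for the ambient restriction of the cone.
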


\begin{theorem}[Chiodo--Ruan \cite{CR}]\label{thm:lgmirror}
Setting
\[
\tau^{LG}=\frac{I^{LG}_1(\ft)}{I^{LG}_0(\ft)},
\]
we have
\[
J^{LG}(\tau^{LG},z)=\frac{I^{LG}(\ft,z)}{I^{LG}_0(\ft)}.
\]
\end{theorem}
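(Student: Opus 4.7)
The plan is to exhibit $I^{LG}(\ft,z)/I^{LG}_0(\ft)$ as a point of the Lagrangian cone $\L^{LG}$ and then identify it with $J^{LG}(\tau^{LG},z)$ by matching leading asymptotics in $z$. Specifically, once membership in the cone is established, a direct expansion gives
\[
\frac{I^{LG}(\ft,z)}{I^{LG}_0(\ft)}=\phi_0 z + \tau^{LG}\phi_1 + \cO(z^{-1}),
\]
which is the same asymptotic shape as $-J^{LG}(\tau^{LG},-z)$. Because the string and dilaton equations reduce $\L^{LG}$ to the one-dimensional small state-space slice (as recalled in the excerpt), any point of $\L^{LG}$ with this asymptotic form is the $J$-function at parameter $\tau^{LG}\phi_1$, and the identity follows.

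The technical heart is therefore the first step: showing that $I^{LG}(\ft,z)$ lies on $\L^{LG}$. I would approach this by passing to an auxiliary twisted theory. Consider a graph-space compactification of the moduli of 5-spin curves equipped with a section of an orbifold $\bP^1$-bundle, carrying the natural $\bC^*$-scaling. Since the Witten class is not itself $\bC^*$-equivariant, replace it by the equivariant class associated to an auxiliary bundle such as $R\pi_*L^{\otimes(-1)}$, producing a twisted theory whose $J$-function is, by construction, a section of a twisted Lagrangian cone $\L^{LG}_{\tw}$. Virtual localization on the graph space decomposes contributions according to how the scaling weight is distributed between a distinguished ``cap'' component at $0\in\bP^1$, whose section is maximally ramified of local multiplicity $a+1\pmod 5$, and the complementary portion of the curve. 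The cap contributions reduce to products of equivariant Chern classes over moduli of 5-spin curves with a fixed line bundle profile; these evaluate explicitly to the hypergeometric products
\[
\prod_{0<k<\frac{a+1}{5},\ \langle k\rangle=\langle\frac{a+1}{5}\rangle}(kz)^5
\]
appearing in the definition of $I^{LG}$. Assembling the localization sum over all cap types produces the identity $I^{LG}(\ft,z)\in \L^{LG}_{\tw}$, and taking a non-equivariant limit yields $I^{LG}(\ft,z)/I^{LG}_0(\ft)\in \L^{LG}$.

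The main obstacle is controlling this non-equivariant limit. Individual summands in the localization decomposition contain negative powers of the equivariant parameter, and one must verify that these singular contributions cancel so that the specialization to $\L^{LG}$ is well-defined. The key input is the narrow-sector convention $\phi_4=0$, which forces the correlators indexed by the problematic multiplicities to vanish and hence kills the would-be polar terms; combined with the mirror-theorem-style argument that the limit of the twisted cone equals the untwisted cone, this yields membership in $\L^{LG}$. Once these cancellations are secured, matching the $z^1$ and $z^0$ coefficients against $J^{LG}(\tau^{LG},-z)$ and invoking the small state-space reduction completes the identification.
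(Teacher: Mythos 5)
The paper does not actually prove this statement: it is quoted directly from Chiodo--Ruan \cite{CR}, so there is no internal proof to compare against. Your overall architecture --- (i) place $I^{LG}$ on the Lagrangian cone by passing to an equivariantly twisted extension of the theory, (ii) take a non-equivariant limit using the narrow condition, (iii) identify the result with $J^{LG}$ from the leading $z$-asymptotics --- is the correct and standard one, and is essentially how the theorem is proved in the literature. However, the mechanism you propose for step (i), graph-space $\bC^*$-localization with a distinguished cap, is not the route of \cite{CR}: Chiodo and Ruan twist by the equivariant Euler class of ($5$ copies of) $R\pi_*L$ and invoke Tseng's orbifold quantum Riemann--Roch theorem, which exhibits the twisted cone as the image of the untwisted cone under an explicit hypergeometric symplectomorphism; the factors $\prod(kz)^5$ then arise from Grothendieck--Riemann--Roch rather than from cap integrals. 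Your graph-space route is viable (it is the engine behind later quasimap and big-$I$-function proofs), but it silently requires an additional ingredient: a recursion/polynomiality characterization of $\L^{LG}_{\tw}$ that converts the localization decomposition into the assertion $I^{LG}\in\L^{LG}_{\tw}$. As written, that assertion is the one genuinely nontrivial step and it is asserted rather than derived.

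A few smaller corrections. The twisting bundle should be ($5$ copies of) $R\pi_*L$ itself, equivalently $R\pi_*\L(-\Sigma_5)^{\oplus 5}$ as in Section \ref{sec:twisted}, not $R\pi_*L^{\otimes(-1)}$. In the narrow sector one has $R^0\pi_*L=0$, so $-R\pi_*L$ is an honest vector bundle whose non-equivariant Euler class is the Witten class; this, rather than a cancellation of polar terms, is why the limit exists, and the convention $\phi_4=0$ serves only to discard the broad sector where $R^0\pi_*L\neq 0$. Finally, the concluding identification needs only that $\L^{LG}$ is the graph of $dF_0^{LG}$ (so a point of the cone is determined by its projection to $\cH_+^{LG}$); the string/dilaton reduction to the small slice is not needed there, and the sign in ``$-J^{LG}(\tau^{LG},-z)$'' should be dropped.
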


Chiodo and Ruan also studied the relationship between the respective $I$-functions. They proved the following, which verifies the genus-zero Landau-Ginzburg/Calabi-Yau correspondence for the quintic threefold.

\begin{theorem}[Chiodo--Ruan \cite{CR}]\label{thm:CR}
Define a linear transformation $\U(-z):\cH^{LG}\rightarrow\cH^{CY}$ by
\[
\U(-z)(\phi_m)=\frac{\xi^{m+1}}{\re^{-2\pi\ri\varphi_1/z}-\xi^{m+1}}\frac{-2\pi\ri(-z)^m}{\Gamma(1+5\varphi_1/z)}\frac{\Gamma^5(1+\varphi_1/z)}{\Gamma^5(1-(m+1)/5)}.
\]
Then $\U(z)$ is symplectic and, upon identifying $\fq^{-1}=\ft^5$, there exists an analytic continuation of $I^{CY}(\fq,z)$ such that
\[
\U(z)(\ft I^{LG}(\ft,-z))=5\widetilde I^{CY}(\ft,-z).
\]
\end{theorem}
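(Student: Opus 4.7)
The theorem splits into two independent claims—that $\U$ is symplectic, and that an appropriate analytic continuation of $I^{CY}$ satisfies the stated relation to $I^{LG}$—which I would prove separately.

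For symplecticity, the plan is to check directly that $\U$ preserves the symplectic forms on $\cH^{LG}$ and $\cH^{CY}$, which reduces to a computation of the Poincar\'e pairing $(\U(-z)\phi_m, \U(z)\phi_{m'})^{CY}$ and a comparison with $(\phi_m, \phi_{m'})^{LG} = 5\delta_{m+m' = 3}$. Inserting the formula for $\U$, this pairing produces a product of Gamma functions at $\pm\varphi_1/z$ together with root-of-unity factors coming from the denominators $\re^{\mp 2\pi\ri\varphi_1/z} - \xi^{m+1}$, weighted against $(\varphi_a, \varphi_b)^{CY}$. The reflection identity $\Gamma(1+x)\Gamma(1-x) = \pi x/\sin(\pi x)$ collapses the Gamma ratio into a trigonometric expression in $\varphi_1/z$, which combines with the factorization $x^5 - 1 = \prod_{k=1}^{5}(x - \xi^k)$ to yield the desired equality.

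For the $I$-function identity, the plan is to use the Mellin-Barnes method. Rewrite $I^{CY}$ by replacing each $\prod_k (\varphi_1 + kz)$ and $\prod_k (5\varphi_1 + kz)$ with Gamma ratios, and express the resulting sum over $d \geq 0$ as a contour integral in a complex variable $s$ whose integrand contains a kernel $\Gamma(-s)$, so that the series $I^{CY}$ is recovered as the sum of residues at $s \in \Z_{\geq 0}$. After substituting $\fq^{-1} = \ft^5$, deform the contour to the left across the poles of $\Gamma(1 + 5\varphi_1/z + 5s)$, located at $s = -(a+1)/5 - \varphi_1/z$ for $a \geq 0$. Organizing these residues by $a \pmod 5$ produces five geometric series in $\ft^5$ whose closed forms are precisely the factors $\xi^{m+1}/(\re^{-2\pi\ri\varphi_1/z} - \xi^{m+1})$ appearing in $\U(-z)(\phi_m)$; the remaining Gamma factors reassemble into the prefactor of $\U$, the $\phi_a$-coefficient of $I^{LG}$, and the overall factors of $5$ and $\ft$ in the statement.

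The main obstacle is the contour deformation: one must bound the integrand along large semicircular arcs to justify discarding the arc contribution, which restricts the region of $\ft$ where the identity holds as an equality of convergent sums before being extended by analytic continuation. One must also track branch conventions carefully when passing between $\fq^{\varphi_1/z}$ and $\ft^{-5\varphi_1/z}$, since $\varphi_1$ is nilpotent but the logarithms in the exponents are honest multi-valued functions on the $\ft$-plane; fixing these conventions is precisely what determines the choice of analytic continuation $\widetilde{I}^{CY}$ referenced in the statement. Once these analytic issues are dispatched, the remaining residue bookkeeping and Gamma function manipulations are routine, and a coefficient-by-coefficient identification in $\ft$ completes the proof.
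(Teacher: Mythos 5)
The paper gives no proof of this statement---it is imported verbatim from Chiodo--Ruan, whose argument (as the paper itself notes in the proof of Theorem \ref{thm:equivlgcy}) is precisely the Mellin--Barnes contour deformation you outline, with symplecticity checked via the reflection formula and the factorization $x^5-1=\prod_{k=1}^5(x-\xi^k)$, so your proposal is a faithful reconstruction of the cited proof. One small correction to the bookkeeping: the factor $\xi^{m+1}/(\re^{-2\pi\ri\varphi_1/z}-\xi^{m+1})$ is not the closed form of a geometric series in $\ft^5$ over a residue class---it is the value of the $\pi/\sin(\pi s)$-type kernel at each pole $s=-(a+1)/5-\varphi_1/z$ (hence constant along each class $a\bmod 5$ and independent of $\ft$), while summing the residues within a fixed class is what reassembles the corresponding $\phi_m$-coefficient of $\ft I^{LG}$.
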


From the discussion above, it follows that Theorem \ref{thm:CR} can be rephrased as the statement that the symplectomorphism $\U(z)$ identifies Givental's Lagrangian cones upon analytic continuation. Following ideas of Givental, Chiodo and Ruan wrote the following in \cite{CR}:
\begin{displayquote}
...the quantization $\widehat\U$ is a differential operator which we expect to yield the full higher genus Gromov-Witten partition function when applied to the full higher genus Fan-Jarvis-Ruan-Witten partition function.
\end{displayquote}

In other words, Chiodo and Ruan conjectured that the higher-genus LG/CY correspondence can be formulated as an explicit relationship, depending only on genus-zero data, between the GW and FJRW partition functions. In Section \ref{sec:quantization} below, we make this conjecture more explicit, and we give a precise statement of our main result, which proves the genus-one part of their conjecture.

\subsection{Genus-One Mirror Theorems}

Our proof of the genus-one LG/CY correspondence relies on the genus-one mirror theorems. In GW theory, the genus-one mirror theorem was conjectured by Bershadsky--Cecotti--Ooguri--Vafa \cite{BCOV} and originally proved by Zinger \cite{Zinger} (by combining the results in Kim--Lho \cite{KimLho} and Ciocan-Fontanine--Kim \cite{CFK5}, there is also a new proof using quasimap techniques).

\begin{theorem}[Zinger \cite{Zinger}]
Setting
\[
\tau^{CY}=\frac{I^{CY}_1(\fq)}{I^{CY}_0(\fq)},
\]
we have
\[
F_1^{CY}(\tau^{CY})=\log \left(I_0^{CY}(\fq)^{-\frac{31}{3}}\fq^{-\frac{25}{12}}(1-5^{5}{\fq}  )^{-\frac{1}{12}} \left(\fq\frac{d\tau^{CY}}{d\fq}\right)^{-\frac{1}{2}} \right).
\]
\end{theorem}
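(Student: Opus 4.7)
The plan is to prove the identity by lifting the computation of $F_1^{CY}(\tau^{CY})$ from $\M_{1,n}(X,d)$ to $\M_{1,n}(\bP^4,d)$, applying torus localization, and then recognizing the resulting genus-zero fragments in terms of the $I$-function via Theorem \ref{thm:cymirror}. By the standard correspondence between Gromov-Witten invariants of $X$ and twisted Gromov-Witten invariants of $\bP^4$, the genus-one potential $F_1^{CY}$ reduces to integrals on $\M_{1,n}(\bP^4,d)$ against the Euler class of the (virtual) K-theoretic push-forward of $f^\ast\cO(5)$, where $f$ is the universal map. Since this construction is equivariant for the standard $(\bC^\ast)^5$-action on $\bP^4$, the Graber-Pandharipande virtual localization formula applies and expresses $F_1^{CY}$ as a sum over decorated graphs.

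The next step is to organize the localization graphs and separate the fixed loci into two qualitative families: (a) loci where the genus-one subcurve is contracted to a single torus-fixed vertex, and (b) loci where the arithmetic genus is distributed among contracted rational trees joined to a skeleton of balanced edges. Contributions of type (a), after summing over the five fixed points and invoking the Hodge integral $\int_{\M_{1,1}}\psi = 1/24$, repackage through the $S$-matrix $S^{CY}(\tau,z)^\ast$, which by Theorem \ref{thm:cymirror} is controlled by the $I$-function; matching with the closed formula pins down the factor $I_0^{CY}(\fq)^{-31/3}$ together with the conifold contribution $(1-5^5\fq)^{-1/12}$. Contributions of type (b) reduce to sums of derivatives of $J^{CY}$ evaluated on the small state-space, and the Jacobian of the change of variable $\fq\mapsto\tau^{CY}$ produces $(\fq\, d\tau^{CY}/d\fq)^{-1/2}$, while the asymptotics at the large-volume limit pin down $\fq^{-25/12}$.

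The principal obstacle is the stratum on which an elliptic subcurve is contracted by every stable map: at such loci the moduli space $\M_{1,n}(X,d)$ carries a non-reduced virtual structure, and disentangling the resulting excess intersection contribution is notoriously delicate. Zinger's original strategy introduces a blow-up and works with the associated \emph{reduced} genus-one invariants, then runs a careful comparison to recover the standard ones. An alternative I would pursue is the quasimap route of Kim-Lho and Ciocan-Fontanine-Kim: replace $\M_{1,n}(X,d)$ by the genus-one $\epsilon$-stable quasimap moduli space, where contracted elliptic components are absent for small $\epsilon$, and invoke the quasimap wall-crossing formula to transport the result back to Gromov-Witten invariants. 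In either route, the combinatorially intricate heart of the argument is verifying that the graph sum, after recombination via Theorem \ref{thm:cymirror}, exactly reproduces the transcendental right-hand side with the precise rational exponents stated.
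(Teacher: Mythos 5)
First, a point of order: the paper does not prove this statement; it is imported verbatim from Zinger (with the quasimap route of Kim--Lho and Ciocan-Fontanine--Kim noted as an alternative), so there is no internal proof to compare yours against. Judged on its own terms, your proposal is an outline of the two known strategies rather than a proof, and its opening step is wrong as stated. There is no ``standard correspondence'' in genus one reducing $F_1^{CY}$ to integrals over $\M_{1,n}(\bP^4,d)$ against $e(R\pi_*f^*\cO(5))$: the quantum Lefschetz / hyperplane property holds in genus zero but famously fails in genus one, precisely because of the fixed loci you later flag, where the elliptic core is contracted and $R^1\pi_*f^*\cO(5)$ jumps rank. The entire content of Zinger's theorem is that this naive reduction must be replaced --- either by the Li--Zinger comparison between standard and \emph{reduced} genus-one invariants (the reduced invariants do satisfy a hyperplane-type property on a desingularized moduli space), or by genus-one quasimap wall-crossing. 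Your text acknowledges the contracted-elliptic-component stratum as ``the principal obstacle,'' but treats it as an excess-intersection correction inside an otherwise valid localization computation, when in fact the correction term is itself a genus-zero quantity of the same order as the answer, and computing it \emph{is} the theorem.

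Second, even granting a valid reduction, everything that actually produces the right-hand side is deferred: the exponents $-\frac{31}{3}$, $-\frac{25}{12}$, $-\frac{1}{12}$ and the Jacobian factor $\left(\fq\,\frac{d\tau^{CY}}{d\fq}\right)^{-1/2}$ are asserted to ``pin down'' from the graph sum without any computation, and your assignment of which factor comes from which family of graphs (for instance, attributing $(1-5^5\fq)^{-1/12}$ to the contracted-genus-one loci) is a guess rather than an argument. If you intend to cite this result, cite it, as the paper does; if you intend to prove it, you must carry out either the Zinger/Li--Zinger analysis or the quasimap wall-crossing in full, and in either case the combinatorial identity matching the localization or wall-crossing graph sum to the closed transcendental formula is the proof, not a final verification step.
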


In FJRW theory, the genus-one mirror theorem was conjectured by Huang--Klemm--Quackenbush \cite{HKQ} and proved by the authors \cite{GR}.

\begin{theorem}[Guo--Ross \cite{GR}]
Setting
\[
\tau^{LG}=\frac{I^{LG}_1(\ft)}{I^{LG}_0(\ft)},
\]
we have
\[
F_1^{LG}(\tau^{LG})= \log \left(I_0^{LG}(\ft)^{-\frac{31}{3}}(1-({\ft}/{5})^{5}  )^{-\frac{1}{12}} \left(\frac{d\tau^{LG}}{d\ft}\right)^{-\frac{1}{2}} \right).
\]
\end{theorem}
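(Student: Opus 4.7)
The plan is to adapt the proof of the genus-one Gromov-Witten mirror theorem to FJRW theory, using an auxiliary torus-equivariant theory of $5$-spin curves as the intermediary. The fundamental ingredients are the genus-zero FJRW mirror theorem (Theorem~\ref{thm:lgmirror}), Givental's twisted theory framework, and Zinger's reduction of genus-one potentials to sums over decorated graphs.

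First, I would introduce a $\bC^*$-equivariant twisted $5$-spin theory in which the Witten class is replaced by an equivariant Euler class of $R\pi_*\L^{\otimes 5}$ incorporating an equivariant parameter $\lambda$. This theory has a computable $I$-function that specializes to $I^{LG}$ in the non-equivariant limit, and its genus-one potential can be accessed by virtual torus localization. A quasimap-type wall-crossing (in the spirit of Ciocan-Fontanine--Kim and Chang--Janda--Ruan--Webb, adapted to spin curves) then identifies $F_1^{LG}(\tau^{LG})$ with the non-equivariant limit of the equivariant genus-one potential evaluated on the mirror map.

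Second, I would apply virtual torus localization and reduce the resulting graph sum to the standard genus-one shape: tree-like graphs with a single distinguished genus-one vertex, together with one-loop graphs consisting of a cycle of $\bP^1$-type edges with rational trees attached. The rational vertex contributions are packaged using the $S$-matrix and the $I$-function data from Theorem~\ref{thm:lgmirror}; the genus-one vertex produces a Hodge integral on $\M_{1,n}^{1/5}$ that is computable via Mumford's formula; and each edge contributes an explicit rational function in $\lambda$.

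Finally, I would match the summed expression with the right-hand side of the theorem factor by factor. The factor $I_0^{LG}(\ft)^{-31/3}$ arises from the mirror-map correction to the leading term of the genus-one potential, with the exponent reflecting the topology of the quintic (essentially $(\dim X + c_3(TX)/12)$-type data); the Jacobian $(d\tau^{LG}/d\ft)^{-1/2}$ tracks the change of variables from $\ft$ to $\tau^{LG}$; and the discriminant factor $(1-(\ft/5)^5)^{-1/12}$ comes from Mumford's formula applied to the twisted Hodge bundle, with $1-(\ft/5)^5$ playing the role of the LG discriminant. The main obstacle is the loop contribution: summing an infinite series of graphs whose loop contains arbitrarily many $\bP^1$-edges and identifying the result with the logarithmic derivative of the discriminant in order to extract the precise coefficient $-1/12$ will require delicate combinatorial bookkeeping, paralleling the most technically demanding step of Zinger's original GW argument.
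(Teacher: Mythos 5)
This theorem is not proved in the present paper: it is imported from \cite{GR}, and Section \ref{sec:twisted} recapitulates the machinery of that proof, so your proposal should be measured against it. Your overall architecture --- pass to a $\bC^*$-twisted $5$-spin theory with a computable $I$-function, compute its genus-one potential, and take the non-equivariant limit with the genus-zero mirror theorem as input --- is indeed the right skeleton. But the central mechanism you propose does not exist on the LG side. The torus acts only fiberwise on $\L^{\oplus 5}$, not on $\M_{g,\vec m}^{1/5}$, so there is no virtual torus localization, no fixed-point graph sum, and in particular no loops built from $\bP^1$-type edges with tails attached. The graph sum one actually uses comes from the Givental--Teleman reconstruction of the \emph{semisimple} twisted CohFT (equivalently, from Chiodo's Grothendieck--Riemann--Roch formula for spin curves, the analogue of Mumford's formula you allude to), and in genus one it collapses to a finite closed expression in the canonical coordinates, the discriminants $\Delta_\alpha$, and the diagonal entries $(R_1)_{\alpha\alpha}$, as in \eqref{eq:dflg}. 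Consequently the ``infinite series of loop graphs with arbitrarily many edges'' that you identify as the main obstacle is simply not present; the real work is the explicit computation of the canonical coordinates, the normalization $\Delta_\alpha$, and $(R_1^{LG})_{\alpha\alpha}$ (Propositions \ref{prop:du}--\ref{prop:cob} here, Section 7 of \cite{GR}), from which the exponents $-\tfrac{31}{3}$, $-\tfrac{1}{12}$, $-\tfrac12$ fall out by direct bookkeeping rather than by a Zinger-style resummation.

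The second gap is the bridge between the twisted theory and FJRW theory. Quasimap wall-crossing governs $\epsilon$-stability chambers and does not identify $F_1^{LG}$ with a non-equivariant limit. What is actually needed is (i) the identification of the narrow Witten class with $\pm c_{\mathrm{top}}$ of the obstruction bundle, and (ii) control of the failure of concavity of $R\pi_*\L$ in genus one, where $R^0\pi_*\L$ jumps on boundary strata, so that the $\lambda\to 0$ limit of the twisted genus-one potential is shown to equal the FJRW potential. This is where the genuine technical content of \cite{GR} lies, and your proposal passes over it with a tool that does not address it. With the localization step replaced by the semisimple reconstruction and the wall-crossing step replaced by the concavity analysis, your outline would match the actual proof.
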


\section{Birkhoff Factorization and Geometric Quantization}\label{sec:quantization}

In order to make the higher-genus Landau-Ginzburg/Calabi-Yau correspondence more explicit, we write the linear transformation $\U(z)$ as a matrix in the bases $\{\phi_m\}$ and $\{\varphi_m\}$. Following Coates--Ruan \cite{CoatesRuan}, we consider the Birkhoff factorization of the matrix $\U(z)$:
\[
\U(z)=\U_-\U_0\U_+
\]
where $\U_-=1+\cO(z^{-1})$ is upper triangular, $\U_+=1+\cO(z)$ is lower triangular, and $\U_0$ is a diagonal matrix that is constant in $z$. By analogy with Givental \cite{GiventalHamiltonians}, we define
\[
S^{-1}(z)=\U_-(z)
\]
and
\[
R(z)=\U_0\U_+(z)\U_0^{-1},
\]
so that 
\[
\U(z)=S^{-1}(z)R(z)\U_0.
\]
We view $R$ and $S$ as linear automorphisms of $\cH^{CY}$, and $\U_0$ as a linear identification of $\cH^{LG}$ and $\cH^{CY}$. Since $\U$ is symplectic (i.e. $\U(z)\U(-z)^*=1$, where the asterisk denotes adjoint), it is not hard to see that $S$, $R$, and $\U_0$ are also symplectic:
\[
S(z)S(-z)^*=R(z)R(-z)^*=\U_0\U_0^*=1.
\]

Consider the geometric quantizations $\widehat R$, $\widehat S^{-1}$, and $\widehat\U_0$, defined, for example, in \cite{GiventalHamiltonians}. These are differential operators, which can be computed explicitly by the following result.

\begin{theorem}[Givental \cite{GiventalHamiltonians}]\label{thm:quantize}
Let $\bq(z)=q_k^m\varphi_mz^k$ be coordinates on $\cH_+^{CY}$. Given a partition function $\D(\bq)$ on $\cH_+^{CY}$, the quantized operators act as follows.
\begin{enumerate}
\item The quantization of $\U_0$ acts by
\[
\widehat\U_0\;\D(\bq)=\D(\U_0^{-1}\bq).
\]
\item The quantization of $S^{-1}$ acts by
\[
\widehat{S^{-1}}\;\D(\bq)=e^{W(\bq,\bq)/2\hbar}\D\left(\left[S\bq\right]_+\right)
\]
where $\left[S\bq\right]_+$ is the power series truncation of $S(z)\bq(z)$ and the quadratic form $W(\bq,\bq)=\sum_{k,l}(W_{kl}q_k,q_l)^{CY}$ is defined by
\[
\sum_{k,l\geq 0}\frac{W_{kl}}{w^kz^l}:=\frac{S(w)^*S(z)-1}{w^{-1}+z^{-1}}.
\]
\item The quantization of $R$ acts by
\[
\widehat R\;\D(\bq)=\left[e^{\frac{\hbar}{2} V\left(\frac{\partial}{\partial_{\bq}},\frac{\partial}{\partial_{\bq}}\right)}\D\right](R^{-1}\bq)
\]
where $R^{-1}\bq$ is the power series $R^{-1}(z)\bq(z)$ and the quadratic form $V=\sum_{k,l}(p_k,V_{kl}p_l)^{CY}$ is defined by
\[
V(w,z)=\sum_{k,l\geq 0}V_{kl}w^kz^l=\frac{1-R(-w)^*R(-z)}{w+z}.
\]
\end{enumerate}
\end{theorem}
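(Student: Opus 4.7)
The plan is to treat each of the three pieces in the Birkhoff factorization separately, reducing each to the standard Weyl quantization of quadratic Hamiltonians. Recall that every symplectic linear transformation $T$ of $\cH^{CY}$ of the form $T = \exp(A)$ corresponds to an element $A$ of the symplectic Lie algebra, which is canonically identified with the space of quadratic functions on $\cH^{CY}$ via the symplectic form $\Omega^{CY}$. The geometric quantization sends each quadratic monomial to a differential operator on functions of $\bq$ by the Weyl rules $\widehat{q_iq_j} = q_iq_j/\hbar$, $\widehat{p^iq_j} = q_j\,\partial_{q_i}$, $\widehat{p^ip^j} = \hbar\,\partial_{q_i}\partial_{q_j}$, and sets $\widehat T := \exp(\widehat A)$. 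The three cases then correspond to three different types of Hamiltonians, distinguished by how $T$ interacts with the polarization $\cH^{CY} = \cH^{CY}_+ \oplus \cH^{CY}_-$.

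First I would dispatch the case of $\U_0$. Since $\U_0$ is constant in $z$ and symplectic, it preserves each of $\cH^{CY}_+$ and $\cH^{CY}_-$, hence its Hamiltonian generator consists only of $pq$-type monomials. The Weyl quantization is therefore first-order, and exponentiating it yields the linear change of variables $\D(\bq) \mapsto \D(\U_0^{-1}\bq)$ directly.

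Next I would tackle $S^{-1}$. Because $S^{-1}(z) - 1 \in \cO(z^{-1})$, applying $S^{-1}$ to an element of $\cH^{CY}_+$ produces $\cH^{CY}_-$ components, so the Lie algebra generator contains both $pq$ monomials (which integrate to the change of variables $\bq \mapsto [S\bq]_+$) and $qq$ monomials (whose quantization is a pure multiplication operator, integrating to the factor $e^{W(\bq,\bq)/2\hbar}$). The plan is to embed $S^{-1}$ in a one-parameter family $S^{-1}_t$ and differentiate both sides of the claimed formula in $t$, reducing the identity to a first-order linear ODE in $t$. The key calculation is then to verify that the generating-series definition of $W_{kl}$ in terms of $(S(w)^*S(z) - 1)/(w^{-1}+z^{-1})$ is the unique antiderivative satisfying the initial condition $W \equiv 0$ at $S \equiv 1$. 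The case of $R$ is handled in exactly parallel fashion: because $R(z) - 1 \in \cO(z)$, the generator now contains $pp$ monomials in addition to $pq$, and Weyl quantization turns these into second-order differential operators in $\bq$. The same one-parameter ODE argument reduces everything to the identification of $V_{kl}$ with the antiderivative of $(1 - R(-w)^*R(-z))/(w+z)$ along the flow.

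The main obstacle is the bookkeeping in the ODE step: one must show that the Baker--Campbell--Hausdorff corrections, which a priori arise because the infinitesimal Hamiltonian does not commute with itself at different times, actually assemble precisely into the symmetric quadratic forms $W$ and $V$ as defined by the generating-series formulas. Here the crucial input is that the symplectic condition $S(z)S(-z)^* = 1$ (respectively $R(z)R(-z)^* = 1$) forces the naive commutator terms to cancel, so that the integrated exponential has the simple closed form stated. Once this is verified for $S^{-1}$ and $R$ individually, the factorization $\U(z) = S^{-1}(z)R(z)\U_0$ gives the action of $\widehat\U$ by composition.
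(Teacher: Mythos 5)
This statement is quoted from Givental \cite{GiventalHamiltonians} and the paper offers no proof of it at all, so there is no internal argument to compare against; your sketch reproduces the standard derivation from the quantization of quadratic Hamiltonians, and in outline it is correct. The decomposition by how each factor interacts with the polarization ($\U_0$ preserves both $\cH_+^{CY}$ and $\cH_-^{CY}$, hence a pure $pq$-generator; $S^{-1}=1+\cO(z^{-1})$ preserves $\cH_-^{CY}$, hence $pq$ and $qq$ terms only; $R=1+\cO(z)$ preserves $\cH_+^{CY}$, hence $pq$ and $pp$ terms only) is exactly the right organizing principle, and the one-parameter ODE argument with the generating-series antiderivative for $W$ and $V$ is how the closed formulas are actually verified. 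One small correction to your last paragraph: for the one-parameter subgroup $e^{tA}$ with $A=\ln S$ the infinitesimal generator is time-independent, so there are no genuinely time-dependent BCH corrections to worry about; the non-commutativity you must handle is between the multiplication part and the first-order part of the single operator $\widehat A$, and it is precisely the ODE method (matching the $\dot W_t$ term against the $qq$-part of the Hamiltonian transported along the flow) that resolves it, not a cancellation forced by $S(z)S(-z)^*=1$. The symplectic condition instead guarantees that $\ln S$ lies in the symplectic Lie algebra, i.e.\ that the quadratic forms $W$ and $V$ are well defined and symmetric. With that adjustment your plan would fill in the proof that the paper delegates to the literature.
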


When a partition function is written in the coordinates $\bt(z)$, we apply the formulas in Theorem \ref{thm:quantize} by first identifying $\bt(z)$ and $\bq(z)$ via the \emph{dilaton shift}:
\[
\bq(z)=\bt(z)-\Phi_0z,
\]
where $\Phi_0=\varphi_0$ or $\phi_0$ depending on the context. To simplify notation, we introduce the following convention:
\[
\oD(\bq)=\D(\bt),
\]
where $\bq$ and $\bt$ are related by the dilaton shift. It is important to notice that, even though we might start with a partition function that is a formal series centered at $\bt(z)=0$, the outcome of acting by the quantized operator may be divergent at $\bt(z)=0$.

The Chiodo--Ruan conjecture can be stated more explicitly in the following form.

\begin{conjecture}[Chiodo--Ruan \cite{CR}]\label{conj:CR}
There exists an analytic continuation of $\D^{CY}$ such that
\[
\widetilde{\D^{CY}(\bt)}\propto\widehat{S^{-1}}\widehat R\;\widehat \U_0\;\D^{LG}(\bt),
\]
where the symbol `$\propto$' denotes equivalence up to a scalar multiple.
\end{conjecture}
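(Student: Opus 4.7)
The plan is to reduce the all-genus Chiodo-Ruan identity to a tractable inductive program that combines the quantization formalism of Theorem \ref{thm:quantize} with a geometric interpolation between Gromov-Witten and Fan-Jarvis-Ruan-Witten moduli.

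\emph{Step 1 (Graph-sum unfolding).} Writing $\D^{LG}=\exp(\sum_g \hbar^{g-1}F_g^{LG})$ and applying Theorem \ref{thm:quantize}(3), the operator $\widehat R$ converts it into a formal sum over stable graphs $\Gamma$: each vertex of genus $h$ is weighted by $F_h^{LG}$ evaluated on $R^{-1}(z)$-twisted legs, and each edge carries the bivector $V_{kl}$. The outer operator $\widehat{S^{-1}}$ contributes a Gaussian prefactor and a change of variables, and $\widehat\U_0$ is a diagonal rescaling. Extracting the coefficient of $\hbar^{g-1}$ produces an explicit combinatorial formula for $(\widehat{S^{-1}}\widehat R\widehat\U_0\D^{LG})_g$ in terms of genus-$\leq g$ FJRW correlators. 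The conjecture then becomes the assertion that, after analytic continuation $\ft^5=\fq^{-1}$, this graph sum matches $F_g^{CY}(\bt)$ up to a single additive genus-$g$ constant for every $g$.

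\emph{Step 2 (Geometric realization and induction).} To compare both sides geometrically, I would deploy the Mixed-Spin-P (MSP) moduli spaces of Chang-Li-Li-Liu. These spaces carry a $\bC^*$-action whose fixed loci at the two ends recover the quintic GW theory and the FJRW theory of $W$ respectively, while intermediate fixed loci are decorated trees of stable maps and $5$-spin curves glued at nodes. Equivariant virtual localization on MSP yields polynomiality constraints relating the two cusps. The critical identification to establish is that the node-smoothing contributions on MSP fixed loci correspond term-by-term to the edge propagator $V(w,z)$, while the leg evaluations recover the action of $R^{-1}(z)$ and $S(z)$; together these would translate the quantization formula into a genuinely geometric statement. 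Proceeding by induction on $g$: the base cases are Theorem \ref{thm:CR} and the main theorem of this paper. For the step at $g\geq 2$, all graphs whose vertices have genus $<g$ are controlled by the induction hypothesis, so the only new datum is the pure genus-$g$ single-vertex contribution. I would pin this down by combining MSP polynomiality (which reduces the problem to finitely many ambiguities per genus) with boundary conditions at the conifold $\fq=5^{-5}$ and the holomorphic anomaly equations satisfied in both models.

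The central obstacle is the non-semisimplicity of the quintic CohFT: Teleman's classification, which would normally force $\widehat{S^{-1}}\widehat R\widehat \U_0\D^{LG}$ to agree with $\D^{CY}$ as soon as the genus-zero data match, is unavailable. Consequently the MSP graph sum must be shown to assemble without anomaly into the Givental quantization, and the analytic continuation across the K\"ahler moduli space must be executed consistently with the localization residues on MSP. One anticipates subtle cancellations between loop- and vertex-type contributions generalizing those worked out in Sections \ref{sec:genusone}-\ref{sec:loop2} of the present paper; producing a uniform combinatorial proof of these cancellations at arbitrary $g$, rather than for $g=1$ alone, is the principal technical difficulty, since the dimension of the space of ambiguities at genus $g$ grows faster than the number of independent boundary conditions immediately available to fix them, and the crux of the matter is to manufacture the missing constraints either from deeper polynomiality on MSP or from higher-genus versions of the BCOV finite-generation theorem.
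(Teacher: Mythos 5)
First, a framing point: the statement you were asked to prove is stated in the paper as a \emph{conjecture}. The paper itself establishes only its genus-zero and genus-one restrictions (Theorem \ref{thm:ellipticquantization}); the full statement is open, so there is no proof in the paper against which your argument can be matched. Your Step 1 is fine and coincides with the paper's own unfolding of $\widehat{S^{-1}}\widehat R\,\widehat\U_0$ into a Feynman graph sum (Section \ref{sec:graphsums}), but that is bookkeeping rather than the content of the conjecture.

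The genuine gap is in Step 2, and you essentially name it yourself: the inductive step must pin down the single genus-$g$ one-vertex contribution, and you concede that the constraints you invoke (MSP polynomiality, conifold boundary conditions, holomorphic anomaly) leave more ambiguities than equations. A proof cannot end with the remark that the missing constraints must be ``manufactured.'' Two of your ingredients are moreover themselves unproved theorems rather than available tools: (i) the claimed term-by-term identification of MSP node-smoothing contributions with the edge propagator $V(w,z)$ and of the leg evaluations with $R^{-1}(z)$ and $S(z)$ is a substantive assertion — the MSP localization sum is indexed by different combinatorial data, and there is no a priori reason it reassembles into Givental's quantization of the specific symplectomorphism $\U$; and (ii) higher-genus holomorphic anomaly/finite-generation statements for the quintic were not established, so using them as input is circular. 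For contrast, what the paper actually does at $g=1$ is quite different in character: it reduces to the small state-space via the string and dilaton equations (Lemma \ref{prop:reduction}), splits the genus-one graph sum into vertex-type and loop-type pieces, evaluates the vertex piece via the tail series (Lemma \ref{lem:vertextail}) together with the genus-one FJRW mirror theorem, and evaluates the loop piece by passing to semisimple \emph{twisted} theories and comparing canonical $R$-matrices (Sections \ref{sec:twisted}--\ref{sec:loop2}); these anchors — the genus-$g$ mirror theorems on both sides — do not exist for $g\geq 2$, which is precisely why your induction does not close. Your proposal is a plausible research program, not a proof.
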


The main result of this paper is the following partial verification of Conjecture \ref{conj:CR}.

\begin{theorem}\label{thm:ellipticquantization}
Conjecture \ref{conj:CR} holds for the genus-zero and genus-one potentials. In other words, there exists an analytic continuation and a constant $C$ such that, for $g\leq 1$,
\[
[\hbar^{g-1}]\widetilde{\log\left(\D^{CY}(\bt)\right)}=[\hbar^{g-1}]\log\left(\widehat{S^{-1}}\widehat R\;\widehat \U_0\;\D^{LG}(\bt)\right)+\delta_{g,1}C.
\]
\end{theorem}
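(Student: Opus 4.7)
The plan is to first apply the string and dilaton equations, as advertised in the outline of the paper, to reduce the identity of formal power series in the full variables $\bt(z)$ to an identity along the one-dimensional small state-space ($\tau\varphi_1$ on the CY side, $\tau\phi_1$ on the LG side). The genus-zero case ($g=0$) is then essentially a reformulation of Chiodo--Ruan's Theorem \ref{thm:CR}: since $\U$ identifies the Lagrangian cones after analytic continuation, the two small-slice $J$-functions differ precisely by the scalar prescribed by the genus-zero mirror theorems, and this matching can be promoted to an equality of genus-zero potentials up to the scalar ambiguity that Theorem \ref{thm:quantize}(2) and (3) introduce. This part serves mainly to fix conventions and to record the explicit Birkhoff factors $S$ and $R$ for use in the genus-one analysis.

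For the genus-one case, I would extract the $\hbar^0$-coefficient of $\log(\widehat{S^{-1}}\widehat R\widehat\U_0\D^{LG})$ using Theorem \ref{thm:quantize}. The $\widehat R$-action is a Feynman-type sum over graphs whose edges are weighted by the bilinear form $V$ and whose vertices carry $F_g^{LG}$-insertions; at order $\hbar^0$ only two graph types contribute, namely a single genus-one ``vertex-graph'' carrying $F_1^{LG}$ evaluated at the $S$- and $\U_0$-shifted insertions, and ``loop-graphs'' consisting of a single genus-zero vertex with a single self-edge weighted by $V$. The outer $\widehat{S^{-1}}$-action then contributes the quadratic form $W$ paired against the dilaton shift $-\phi_0z$ as in Theorem \ref{thm:quantize}(2), and also produces a truncation of arguments that must be accounted for.

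To match the left-hand side, I would analytically continue Zinger's genus-one formula using $\fq^{-1}=\ft^5$ and decompose it into three logarithmic pieces: one involving $I_0^{CY}$, one involving the Jacobian $d\tau^{CY}/d\fq$, and one involving the conifold factor $(1-5^5\fq)^{-1/12}$. The first two pieces should be identified, respectively, with the vertex-graph contribution, via the genus-one LG mirror theorem of Guo--Ross applied to $F_1^{LG}$, and with the contribution coming from the change of variables induced by $S$ (where the derivative of the mirror map produces exactly the Jacobian factor). The third piece must emerge from the loop-graph contribution after explicit computation of the relevant entries of $V$ and of the genus-zero LG two-point function.

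The main obstacle is the explicit evaluation of the loop-type contribution in the non-semisimple setting, where $V$ cannot be simultaneously diagonalized with any natural basis and the Teleman-style semisimple cancellations are unavailable. The strategy I would pursue is to re-express the loop-graph sum through an auxiliary twisted FJRW computation---treating the self-edge as an untwisted two-pointed vertex tied together with the propagator $V$---so that the asymptotic expansion of the Birkhoff factor $R(z)$ at the regular singular points of the Picard--Fuchs operator annihilating $I^{LG}$ can be read off directly, producing the conifold factor after analytic continuation. The remaining constant $C$ in the theorem absorbs the branch ambiguity of the analytic continuation and the residual scalars arising from the comparison of normalizations; this is also the most delicate technical point, since it requires careful bookkeeping of how the vertex-graph and loop-graph contributions interact with the quadratic forms $W$ and $V$ along the small state-space.
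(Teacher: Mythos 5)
Your overall architecture --- reduction to the small state-space via the string and dilaton equations, the genus-zero case as a repackaging of Theorem \ref{thm:CR}, and the splitting of the genus-one Feynman expansion into a single genus-one vertex graph plus genus-zero loop graphs --- coincides with the paper's. But two of your specific claims do not survive contact with the details. First, the proposed matching of the three factors in Zinger's formula to the three sources (vertex graph, $S$-change of variables, loop graphs) is wrong. By the Guo--Ross mirror theorem, the vertex contribution $F_1^{LG}(\tau^{LG})$ already contains the LG analogues of \emph{all three} factors --- $I_0^{LG}$, the Jacobian $d\tau^{LG}/d\ft$, and the conifold factor $(1-(\ft/5)^5)^{-1/12}$ --- together with a correction $\tfrac{25}{3}\log\bigl(\ft I_0^{LG}/5\widetilde I_0^{CY}\bigr)$ forced by the dilaton equation applied to the shifted insertion $T(\bq,z)$. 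The loop graphs must therefore account for the entire remaining discrepancy between the CY and LG genus-one formulas, not merely the conifold factor; in the paper this discrepancy is organized by rewriting \emph{both} mirror formulas in the form \eqref{eq:dfcy}, \eqref{eq:dflg} and showing the loop contribution equals $\tfrac12\sum_\alpha(\widetilde R_1^{CY}-R_1^{LG})_{\alpha\alpha}\,du^\alpha$.

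Second, and more seriously, your treatment of the loop residue --- the actual heart of the proof --- is missing the key mechanism. The idea is not merely to ``re-express the loop-graph sum through an auxiliary twisted FJRW computation'': it is that the $(\bC^*)^5$-equivariant twisted extensions of both theories \emph{are} semisimple, so one lifts the residue to the twisted setting (where the non-equivariant limit must be shown to exist, which requires controlling the order of vanishing of $\Phi_4$-insertions), factors the twisted $S$-operators canonically as $\Psi^{-1}\underline{R}(x,z)\,e^{U/z}$ with $\underline{R}$ an $R$-matrix of the semisimple twisted Frobenius manifold, and computes the residue as a difference of the diagonal entries of the linear parts $\underline{R}_1$. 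Moreover, your suggestion that the ``residual scalars arising from the comparison of normalizations'' can be absorbed into the constant $C$ is a genuine gap: the ambiguity between $\underline{R}_1$ and the canonical $R_1$ appearing in \eqref{eq:dfcy}--\eqref{eq:dflg} is a constant multiple of $du^\alpha$, which integrates to a nonconstant multiple of $u=\int L\,d\ft$ and hence cannot be swept into an additive constant in $F_1$. Fixing these constants is a delicate explicit computation (Lemmas \ref{lem:one}--\ref{lem:three}), requiring the evaluation of $(\U_+)_{01}+(\U_+)_{12}+(\U_+)_{23}$ in terms of $\Gamma$-values and a nontrivial identity $(\U_+)_{23}=(\U_+)_{01}$; without it the argument does not close.
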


\begin{remark}
In order to interpret the analytic continuation, we consider both sides as formal power series in the variables $\{t_k^m:(k,m)\neq(0,1)\}$ with coefficients that are analytic in $t_0^1$, and we analytically continue coefficient-by-coefficient. Implicit in Conjecture \ref{conj:CR} is the claim that both sides are analytic in $t_0^1$. The question of whether genus-$g$ potentials are analytic is open in general. We verify the necessary convergence of genus-zero and genus-one potentials throughout the course of our arguments.
\end{remark}


\subsection{Quantized Operators, Potential Functions, and Graph Sums}\label{sec:graphsums}

In order to investigate Theorem \ref{thm:ellipticquantization}, we consider intermediate partition functions 
\begin{align*}
\oD^A(\bq)&:=\widehat\U_0\;\oD^{LG}(\bq)\\
\oD^B(\bq)&:=\widehat R\;\oD^A(\bq)\\
\oD^C(\bq)&:=\widehat{S^{-1}}\;\oD^B(\bq).
\end{align*}

Notice that $\oD^{LG}(\bq)$ is centered at $\bq(z)=-\phi_0z$ while $\oD^A(\bq)$, $\oD^B(\bq)$, and $\oD^C(\bq)$ are centered at $\bq(z)=-\U_0\phi_0z$, $\bq(z)=-R(z)\U_0\phi_0z$, and $\bq(z)=-\left[\U(z)\phi_0z\right]_+$, respectively. For each partition function, we can write
\[
\oD^\bullet(\bq)=:\re^{\sum_{g\geq 0}\hbar^{g-1}\overline{F}_g^\bullet(\bq)}.
\]
Theorem \ref{thm:quantize} implies that the $\U_0$-action is a change of variables:
\[
\oD^A(\bq)=\oD^{LG}(\U_0^{-1}\bq)\hspace{.5cm} \Longrightarrow \hspace{.5cm} \oF_g^A(\bq)=\oF_g^{LG}(\U_0^{-1}\bq).
\]
The $R$-action is more interesting. We have
\begin{align}\label{eq:quaddiff}
\nonumber \sum_{g\geq 0}\hbar^{g-1}\overline F_g^B(\bq)&=\log\left(\oD^B\left(\bq\right)\right)\\
&=\log\left(\left[e^{\frac{\hbar}{2} V\left(\frac{\partial}{\partial_{\bq}},\frac{\partial}{\partial_{\bq}}\right)}\oD^A\right]\left(R^{-1}\bq\right)\right).
\end{align}
The action of the exponential of the quadratic differential operator in \eqref{eq:quaddiff} has a Feynman graph expansion, and the logarithm outputs only the connected graphs. Let $\Gamma$ denote a connected graph consisting of vertices $V$, edges $E$, and legs $L$, with each vertex $v$ labeled by a genus $g_v$. For each $v$, let $\val(v)$ be the total number of legs and edges adjacent to $v$, define $g(\Gamma)=b_1(\Gamma)+\sum_v g_v$ where $b_1$ denotes the first Betti number of the graph, let $F=\{v,e\}$ denote the set of flags, and let $F_v$ and $L_v$ denote the flags and legs adjacent to a vertex $v$. We have
\[
\overline F_g^B(\bq)=\sum_{\Gamma\; :\; g(\Gamma)=g}\frac{1}{|\Aut(\Gamma)|}\underline\Contr(\Gamma)
\]
where
\[
\underline\Contr(\Gamma)=\Res_{z_f=0}\prod_v\underline\Contr(v)\prod_e\underline\Contr(e)
\]
with vertices contributing
\[
\underline\Contr(v)=\left(\sum_{m_f,k_f}\left(\prod_{f\in F_v}\frac{\varphi^{m_f}}{z^{k_f+1}}\otimes\frac{\partial}{\partial q_{k_f}^{m_f}}\right) \oF_g^A(\bq)\right)_{\bq(z)\rightarrow R^{-1}\bq(z)}
\]
contracted along the edges by pairing with the two-tensor
\[
\underline\Contr(e)= V(z_f,z_{f'})=\sum_{m,m'}V(z_f,z_{f'})_{m,m'}\varphi_m\otimes\varphi^{m'}.
\]
Including the $S$-action, we have
\[
\oF_g^C(\bq)=\delta_{g,0}\hbar^{-1}W(\bq,\bq)/2+\sum_{\Gamma\; :\; g(\Gamma)=g}\frac{1}{|\Aut(\Gamma)|}\Res_{z_f=0}\prod_v\Contr(v)\prod_e\Contr(e)
\]
where $\Contr(e)=\underline\Contr(e)$, but we replace the vertex contributions with
\[
\Contr(v)=\sum_{m_f}\left\langle \prod_{l\in L_v} \left(\overline{\bq}(\psi_l)+\varphi_0\psi_l\right)\prod_{f\in F_v}\frac{\U_0^{-1}\varphi_{m_f}}{z_f-\psi_f}\right\rangle^{LG}_{g_v,\val(v)}\bigotimes_f\varphi^{m_f},
\]
where
\[
\overline\bq(z):=\U_0^{-1}R^{-1}(z)[S(z)\bq(z)]_+.
\]

\subsection{String and dilaton equations.}

In this section, we show that the dilaton and string equations commute with quantization, allowing us to reduce Conjecture \ref{conj:CR} to the small state-space. 

The dilaton equation asserts that, for $\bullet=CY$ or $LG$ and $\Phi_m=\varphi_m$ or $\phi_m$, we have
\[
\left\langle \Phi_0\psi\;\Phi_{m_1}\psi^{k_1}\cdots\Phi_{m_n}\psi^{k_n} \right\rangle^\bullet_{g,n+1,(d)}=(2g-2+n)\left\langle \Phi_{m_1}\psi^{k_1}\cdots\Phi_{m_n}\psi^{k_n} \right\rangle^\bullet_{g,n,(d)},
\]
whenever the moduli space on the right-hand side exists. The string equation asserts that
\[
\left\langle \Phi_0\;\Phi_{m_1}\psi^{k_1}\cdots\Phi_{m_n}\psi^{k_n} \right\rangle^\bullet_{g,n+1,(d)}=\sum_{i=1}^n\left\langle \Phi_{m_1}\psi^{k_1}\cdots\Phi_{m_i}\psi^{k_i-1}\cdots\Phi_{m_n}\psi^{k_n} \right\rangle^\bullet_{g,n,(d)},
\]
whenever the moduli space on the right-hand side exists. We interpret $\psi^{-1}=0$. In addition, by a virtual dimension count, the correlator $\left\langle \Phi_{m_1}\psi^{k_1}\cdots\Phi_{m_n}\psi^{k_n} \right\rangle^\bullet_{g,n,(d)}$ vanishes unless $\sum m_i+\sum k_i=n$. Using this vanishing, it is not hard to see that $\D^\bullet(\bt)$ can be reconstructed from its restriction to $\bt(z)=t_0^1\Phi_1$ by the dilaton and string equations and the initial conditions
\begin{equation}\label{eq:initial}
\left\langle \Phi_a\;\Phi_b\;\Phi_0\right\rangle^\bullet_{0,3,(0)}=5\delta_{a+b,3} \hspace{1cm}\text{and}\hspace{1cm} \left\langle \Phi_0\psi \right\rangle_{1,1,(0)}=-\frac{25}{3}.
\end{equation}
It is useful to rephrase the string and dilaton equations as differential operators. In terms of total descendent potentials, the dilaton equation can be rewritten as
\begin{equation}\label{eq:dilaton}
\left(\sum_{m,k}q_k^m\frac{\partial}{\partial q_k^m}+2\hbar\frac{\partial}{\partial \hbar}-\frac{25}{3} \right)\D^\bullet(\bt)=0,
\end{equation}
and it is well known (see, for example, Example 1.3.3.2 in \cite{Coates}) that the string equation takes the form
\begin{equation}\label{eq:string}
\widehat{1/z}\;\D^\bullet(\bt)=0.
\end{equation}
Moreover, the equations \eqref{eq:dilaton} and \eqref{eq:string} take into account the initial conditions \eqref{eq:initial}, and thus determine $\D^\bullet(\bt)$ uniquely from its restriction to $\bt(z)=t_0^1\Phi_1$. The following compatibility is important in order to reduce Conjecture \ref{conj:CR} to the small state-space.

\begin{lemma}\label{prop:reduction}
The formal series $\widehat{S^{-1}}\widehat R\;\widehat \U_0\;\oD^{LG}(\bq)$ centered at $\bq(z)=-\left[\U(z)\phi_0z\right]_+$ satisfies the dilaton equation \eqref{eq:dilaton} and the string equation \eqref{eq:string}.
\end{lemma}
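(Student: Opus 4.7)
The plan is to show that each of the three quantized operators $\widehat{\U_0}$, $\widehat R$, and $\widehat{S^{-1}}$ preserves the property of satisfying the string and dilaton equations, so that both equations propagate from $\oD^{LG}$ (where they hold by the standard FJRW axioms, translated through the dilaton shift) to the composition $\widehat{S^{-1}}\widehat R\widehat{\U_0}\oD^{LG}$.

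The key classical observation I would invoke is that $\U_0$, $R(z)$, and $S^{-1}(z)$ are linear symplectic operators on $\cH$ whose $z$-dependence sits entirely in their matrix entries. Consequently, each of them commutes (as a linear operator on $\cH$) with scalar multiplication by $z$ and by $1/z$, i.e., $[T, M_{1/z}] = 0$ and $[T, M_z] = 0$ for $T \in \{\U_0, R, S^{-1}\}$. I would then appeal to Givental's quantization formalism to lift this classical commutation to the quantum level, using the fact that the quantization cocycle vanishes when $T$ has the block-triangular structure specified by the Birkhoff factorization (diagonal $\U_0$, upper triangular $R$, lower triangular $S^{-1}$). This yields the quantum commutations $[\widehat T, \widehat{1/z}] = 0$, which can also be verified directly by comparing the explicit formulas in Theorem \ref{thm:quantize} with the standard formula for $\widehat{1/z}$. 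Given this and the string equation $\widehat{1/z}\oD^{LG}=0$, commuting $\widehat{1/z}$ through the composition immediately yields the string equation for $\widehat{S^{-1}}\widehat R\widehat{\U_0}\oD^{LG}$.

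For the dilaton equation, I would use the analogous strategy with the Euler operator $E=\sum_{m,k} q_k^m \partial/\partial q_k^m + 2\hbar\partial/\partial \hbar$ in place of $\widehat{1/z}$. Since $\U_0$, $R(z)$, and $S^{-1}(z)$ are linear in $\bq$ with $z$-dependent but $\bq$- and $\hbar$-independent coefficients, they commute classically with the Euler scaling, and this lifts to the quantum level (the quadratic differential $\exp(\hbar V(\partial_\bq,\partial_\bq)/2)$ and the quadratic form $W(\bq,\bq)/2\hbar$ appearing in Theorem \ref{thm:quantize} are themselves Euler-homogeneous of weight zero, as one checks directly). Hence $E$ commutes with the composed quantization, and the dilaton equation (with its constant $-25/3$) propagates from $\oD^{LG}$.

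The main obstacle I foresee is notational rather than conceptual: one must carefully track the change of base point, from $\bq(z)=-\phi_0 z$ for $\oD^{LG}$ to $\bq(z)=-[\U(z)\phi_0 z]_+$ for the image, and verify that the equations \eqref{eq:string} and \eqref{eq:dilaton} in their stated $\bt$-coordinate form correspond precisely to the commutator statements derived above. This requires showing that $[\U(z)\phi_0 z]_+$ is exactly the positive-power truncation produced by the sequence of substitutions $\bq \mapsto \U_0^{-1}\bq$, $\bq \mapsto R^{-1}\bq$, $\bq \mapsto [S\bq]_+$ applied to the starting shift $-\phi_0 z$, which is a direct unwinding of the definitions in Section \ref{sec:graphsums}.
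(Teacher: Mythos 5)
Your proposal is correct, and the string-equation half is essentially the paper's own argument: both proceed by commuting $\widehat{1/z}$ past $\widehat{S^{-1}}$, $\widehat R$, and $\widehat{\U_0}$ and checking that the quantization cocycle vanishes in each case. One caution there: the cocycle vanishing for $[\widehat{1/z},\widehat R]$ is not a purely formal consequence of $R=1+\cO(z)$; the paper has to invoke the specific fact that the linear-in-$z$ term $R_1$ is strictly triangular (and note that in the paper's conventions $S^{-1}=\U_-$ is the \emph{upper} triangular factor and $R$ is conjugate to the lower triangular $\U_+$, the opposite of what you wrote), so your appeal to ``block-triangular structure'' should be made precise at exactly that point. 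For the dilaton equation you take a genuinely different, and arguably cleaner, route: you show the Euler operator $E=\sum q_k^m\partial/\partial q_k^m+2\hbar\partial/\partial\hbar$ commutes with each quantized operator, using that the linear substitutions preserve $\sum q\partial_q$ and that $W(\bq,\bq)/2\hbar$ and $\tfrac{\hbar}{2}V(\partial_\bq,\partial_\bq)$ are Euler-homogeneous of weight zero, so the equation (constant $25/3$ included) propagates wholesale. The paper instead verifies the identity \eqref{eq:dilatonoperator} directly on the Feynman graph expansion of $\overline F_g^C$, applying the FJRW dilaton equation vertex by vertex and using additivity of Euler characteristics $\sum_v(2-2g_v-|F_v|)=2-2g_\Gamma$. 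The two arguments rest on the same homogeneity facts; yours is more formal and avoids the graph combinatorics, while the paper's version has the side benefit of producing the explicit statement \eqref{eq:dilatonoperator} at the level of each $\overline F_g^C$, which is reused later (for instance in the proofs of Propositions \ref{prop:Jfunctions} and \ref{prop:vertextail}). Your closing remark about tracking the shift of the center to $\bq(z)=-\left[\U(z)\phi_0z\right]_+$ is the right bookkeeping point and unwinds exactly as you describe.
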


\begin{proof} We start with the dilaton equation. We must prove
\begin{equation}\label{eq:dilatonoperator}
\left(\sum_{m,k}q_k^m\frac{\partial}{\partial q_k^m}+2g-2 \right)\overline F_g^C(\bq)=\delta_{g,1}\frac{25}{3}.
\end{equation}
First of all, notice that the genus-zero shift $W(\bq,\bq)/2$ is annihilated by the operator in \eqref{eq:dilatonoperator}, simply because it is homogenous of degree $2$ in $\bq$. Next, notice that
\[
\sum_{m,k}q_k^m\frac{\partial}{\partial q_k^m}=\sum_{m,k}\overline q_k^m\frac{\partial}{\partial \overline q_k^m}.
\]
Therefore, by applying the dilaton equation for FJRW invariants to each vertex in the graph sum expression of $\overline F_g^C(\bq)$, along with fact that Euler characteristics add:
\[
\sum_{v\in \Gamma}(2-2g_v-|F_v|)=2-2g_\Gamma,
\]
we observe that
\[
\sum_{m,k}q_k^m\frac{\partial}{\partial q_k^m}\overline F_g^C(\bq)=\sum_{m,k}\overline q_k^m\frac{\partial}{\partial \overline q_k^m}\overline F_g^C(\bq)=(2-2g)\overline F_g^C(\bq)+\delta_{g,1}\frac{25}{3}.
\]
This proves \eqref{eq:dilatonoperator}.

We now verify the compatibility of the string equation. We must prove
\begin{equation}\label{eq:stringoperator}
\widehat{1/z}\widehat{S^{-1}}\widehat R\;\widehat \U_0\;\oD^{LG}(\bq)=0.
\end{equation}
By the string equation in FJRW theory, we know
\[
\widehat{1/z}\;\D^{LG}(\bt)=0.
\]
Therefore, it suffices to check that $\widehat{1/z}$ commutes with $\widehat{S^{-1}}\widehat R\;\widehat \U_0.$ Clearly, $1/z$ commutes with each of $S^{-1}$, $R$, and $\U_0$, but a little care must be taken because the quantization procedure is not an algebra homomorphism. However, by the formula for the cocycle given in Section 1.3.4 of \cite{Coates}, we see immediately that the cocycle vanishes when we commute $\widehat{1/z}$ with $\widehat{S^{-1}}$ and $\widehat \U_0$. Upon noticing that the linear-in-$z$ terms of $R$ are strictly above the diagonal, we also see from Example 1.3.4.1 in \cite{Coates} that the cocycle vanishes when we commute $\widehat{1/z}$ with $\widehat R$. This prove \eqref{eq:stringoperator}.
\end{proof}

Using the reconstruction by the dilaton and string equations, we can make the following reduction.

\begin{corollary}\label{cor:reduction}
In order to prove Conjecture \ref{conj:CR}, it suffices to prove the restriction
\begin{equation}\label{eq:reduction}
\widetilde F_g^{CY}(t_0^1) = F_g^C(t_0^1)+\delta_{g,1}C.
\end{equation}
\end{corollary}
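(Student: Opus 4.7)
The plan is to combine Lemma \ref{prop:reduction} with the reconstruction statement recorded just before \eqref{eq:initial}. By Lemma \ref{prop:reduction}, the series $\widehat{S^{-1}}\widehat R\,\widehat\U_0\,\oD^{LG}(\bq)$ satisfies both the dilaton equation \eqref{eq:dilaton} and the string equation \eqref{eq:string}. On the Gromov--Witten side, $\D^{CY}(\bt)$ satisfies the same two equations by the standard formal properties of moduli of stable maps; both are linear relations with coefficients independent of $t_0^1$, so they persist under the coefficient-wise analytic continuation in $t_0^1$ that defines $\widetilde{\D^{CY}}$.

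Next I would pass to a genus-by-genus statement by extracting the coefficient of $\hbar^{g-1}$ from $\log\D^\bullet$. Both $\widetilde F_g^{CY}$ and $\oF_g^C$ then satisfy the same genus-$g$ equations, namely
\[
\sum_{m,k} q_k^m \frac{\partial F_g}{\partial q_k^m} + (2g-2)F_g = \delta_{g,1}\tfrac{25}{3}
\]
together with the corresponding genus-$g$ piece of the string equation. Setting $\Delta_g(\bt):=\widetilde F_g^{CY}(\bt)-\oF_g^C(\bt)-\delta_{g,1}C$, the inhomogeneous terms cancel in the subtraction, so $\Delta_g$ satisfies the \emph{homogeneous} versions of both equations.

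Finally I would invoke the reconstruction property stated in the excerpt: the dilaton and string equations, together with the initial conditions \eqref{eq:initial}, determine $\D^\bullet(\bt)$ uniquely from its restriction to $\bt(z)=t_0^1\Phi_1$. Applied to the homogeneous system satisfied by $\Delta_g$, for which all initial values vanish, this forces $\Delta_g$ to vanish identically as soon as its restriction to the small state-space vanishes. Hypothesis \eqref{eq:reduction} of the corollary is exactly that vanishing, so $\Delta_g\equiv 0$ and Theorem \ref{thm:ellipticquantization} follows for $g\le 1$.

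The only point in this plan that requires genuine care is verifying that the string- and dilaton-orbit of the small state-space $\bt(z)=t_0^1\Phi_1$ really determines every descendent correlator appearing on either side. This rests on the virtual-dimension constraint $\sum m_i+\sum k_i=n$ special to the quintic: combined with the string equation (which handles $\Phi_0$ insertions by $\psi^{k-1}$ reductions) and the dilaton equation (which handles $\Phi_0\psi$ insertions), this vanishing brings every admissible descendent insertion into the orbit of the $\Phi_1$-only slice. That combinatorial observation is the one nontrivial input underlying the corollary; everything else is formal manipulation of the two descendent equations.
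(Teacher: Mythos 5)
Your proposal is correct and follows essentially the same route the paper intends: the paper derives the corollary in one line from Lemma \ref{prop:reduction} together with the reconstruction of $\D^\bullet(\bt)$ from its restriction to $\bt(z)=t_0^1\Phi_1$ via the string and dilaton equations, which is exactly the argument you spell out (including the key point that the virtual-dimension constraint $\sum m_i+\sum k_i=n$ places every correlator in the string/dilaton orbit of the $\Phi_1$-only slice). Your additional observations — that the difference $\Delta_g$ satisfies the homogeneous equations because the inhomogeneous terms and the constant $C$ are annihilated at genus one, and that the equations persist under coefficient-wise analytic continuation in $t_0^1$ — are correct details the paper leaves implicit.
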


The rest of this paper is devoted to proving \eqref{eq:reduction} in the case of genus-zero and genus-one potential functions. The analytic continuation in \eqref{eq:reduction} is described as follows. By the $g\leq 1$ mirror theorems for GW invariants, $F_g^{CY}(\tau^{CY})$ is an analytic function near $\fq=0$. In the course of our arguments below, we verify that $F_g^C(\widetilde\tau^{CY})$ is also an analytic function at $\ft=0$. The analytic continuation of $\tau^{CY}$ in this expression can be computed explicitly by Theorem \ref{thm:CR}. Thus, the analytic continuation occurring in \eqref{eq:reduction} occurs after substituting $t_0^1=\tau^{CY}$ and takes $F_g^{CY}(\tau^{CY})$ from $\fq=0$ to $\ft=0$ along the same path that identifies $I$-functions in Theorem \ref{thm:CR}.

\section{Genus-Zero Correspondence and Tail Series}\label{sec:genuszero}

Our goal in this section is to prove the genus-zero correspondence in Theorem \ref{thm:ellipticquantization} and to set up some notation for studying generating series of rational tails that appear in the Feynman graph expansions for $F_g^C$. We begin by recalling a few important points about genus-zero descendent invariants and semi-classical limits.

If $M(z)$ is a symplectomorphism such that $\widehat M \D^\bullet=\D^\star$, then a careful study of the genus-zero Feynman graphs (see, for example, Section 3.5 in \cite{CPS}) implies that
\[
M\mathcal{L}^\bullet=\mathcal{L}^\star,
\]
where, as in Section \ref{sec:recap}, the Lagrangian cone $\mathcal{L}$ is the differential of the genus-zero potential. In particular, by identifying the parts that have non-negative powers of $z$, this implies that
\begin{equation}\label{eq:Jfunctions}
\oJ^\star(\bq,-z)=M(z)\oJ^\bullet(M\cdot\bq,-z)
\end{equation}
where
\[
M\cdot\bq(z):=\left[M(z)^{-1}\overline J^\star(\bq,-z)\right]_+.
\]
Keep in mind that the change of variables $M\cdot\bq(z)$ shifts the center of the power series. The next result is a consequence of Theorem \ref{thm:CR}.

\begin{proposition}\label{prop:Jfunctions}
Setting $\tau^C:=\widetilde \tau^{CY}$, we have
\[
\widetilde{J}^{CY}(\tau^C,z)=J^C(\tau^C,z).
\]
\end{proposition}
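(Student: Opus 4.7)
The plan is to derive Proposition \ref{prop:Jfunctions} from the semiclassical correspondence between quantized operators and actions on Lagrangian cones, together with the three mirror theorems of Section \ref{sec:recap}. By construction, $\oD^C = \widehat{S^{-1}}\widehat{R}\,\widehat{\U_0}\,\oD^{LG}$. A direct inspection of the cocycles in Theorem \ref{thm:quantize}, following Example 1.3.4.1 of \cite{Coates}, shows that this composed quantization agrees with $\widehat{\U}$ up to a $\bq$-independent scalar, which does not affect the $J$-function. The semiclassical principle recalled at the start of Section \ref{sec:genuszero} therefore yields $\U(z)\L^{LG} = \L^C$, and hence
\[
\oJ^C(\bq,-z) \;=\; \U(z)\,\oJ^{LG}(\U\cdot\bq,\,-z),
\]
with $\U\cdot\bq(z) := [\U(z)^{-1}\oJ^C(\bq,-z)]_+$.

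Next I would specialize to the small state-space $\bt(z) = \tau^C\varphi_1$ on the CY side. The key subsidiary computation is that $\U\cdot\bq$ lands on the LG small slice, specifically that $\U\cdot\bq(z) = \tau^{LG}(\ft)\phi_1$ (after the dilaton shift) for the analytically continued parameter $\ft$ corresponding to $\fq$. This is where Theorem \ref{thm:CR} enters most directly: the analytic continuation identifies $\U(z)(\ft I^{LG}(\ft,-z))$ with $5\widetilde{I}^{CY}(\ft,-z)$, and extracting the leading $z$-coefficients of this relation pins down both the normalization $I_0^{LG}/\widetilde{I}_0^{CY}$ and the parameter matching $\tau^C \leftrightarrow \tau^{LG}$. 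Since on the quintic the small state-space reconstructs the full slice by the dilaton and string equations (as used in Corollary \ref{cor:reduction}), agreement of the leading coefficients propagates automatically to the higher-order corrections along the slice.

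Combining these ingredients with Theorems \ref{thm:cymirror} and \ref{thm:lgmirror}, the proof reduces to a chain of $I$-function identifications of the form
\[
J^C(\tau^C, z) \;=\; \U(z)\,\frac{I^{LG}(\ft,z)}{I_0^{LG}(\ft)} \;=\; \frac{5\,\widetilde{I}^{CY}(\ft,z)}{\ft\, I_0^{LG}(\ft)} \;=\; \frac{\widetilde{I}^{CY}(\fq,z)}{\widetilde{I}_0^{CY}(\fq)} \;=\; \widetilde{J}^{CY}(\tau^C,z),
\]
where in the third equality the two scalar normalizations are matched by comparing leading $z$-coefficients in the Chiodo--Ruan identity, and the analytic continuation of $\fq$ to $\ft$ is the one described at the end of Section \ref{sec:quantization}.

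The main obstacle will be the bookkeeping in the change of variables $\U\cdot\bq$: verifying that the potentially nontrivial higher-$z$ contributions of $\U(z)^{-1}$ applied to $\oJ^C$ do not drag the image off the small slice, and that the overall normalization $5/(\ft\, I_0^{LG})$ matches $1/\widetilde{I}_0^{CY}$. Both reduce to tracking the top two $z$-coefficients of Theorem \ref{thm:CR} and applying the reconstruction from string and dilaton, but this is the crux of the argument and must be handled carefully since it sets up notation that will be reused throughout the analysis of the genus-one Feynman sums.
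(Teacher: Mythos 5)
Your overall strategy is the same as the paper's (combine the semiclassical identity $\U\L^{LG}=\L^{C}$ with Theorems \ref{thm:cymirror}, \ref{thm:lgmirror}, and \ref{thm:CR}), but the crux of your argument, as you yourself flag it, is resolved incorrectly. The third equality in your chain requires the scalar matching $\ft\, I_0^{LG}(\ft)/5=\widetilde I_0^{CY}(\ft)$, and this is false: the two normalizations do \emph{not} agree, so $\U(z)J^{LG}(\tau^{LG},-z)$ equals $\widetilde J^{CY}(\tau^C,-z)$ only up to the nontrivial scalar $\frac{5\widetilde I_0^{CY}(\ft)}{\ft I_0^{LG}(\ft)}$. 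Concretely, the positive part $[\U(z)J^{LG}(\tau^{LG},-z)]_+$ is $\frac{5\widetilde I_0^{CY}}{\ft I_0^{LG}}(-z\varphi_0+\tau^C\varphi_1)$, which after the dilaton shift sits at $T_0\varphi_0 z+T_1\varphi_1$ with $T_0=1-\frac{5\widetilde I_0^{CY}}{\ft I_0^{LG}}\neq 0$; so the image of the LG small slice does \emph{not} land on the $C$-side small slice, contrary to what your chain of equalities assumes. (Your alternative route via $\U\cdot\bq=[\U(z)^{-1}\oJ^C(\bq,-z)]_+$ is also circular as stated, since that change of variables is defined in terms of the unknown $\oJ^C$.)

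The missing ingredient is the step the paper supplies with the dilaton equation: expanding $J^C(T_0\varphi_0 z+T_1\varphi_1,-z)$ as a Taylor series, the recursion $A_{i,j}=(i+j-2)A_{i-1,j}$ coming from \eqref{eq:dilatonoperator} resums, via the binomial identity \eqref{eq:analyticidentity}, to
\[
J^C\left(T_0\varphi_0 z+T_1\varphi_1,-z\right)=(1-T_0)\,J^C\!\left(\tfrac{T_1}{1-T_0},-z\right)=\frac{5\widetilde I_0^{CY}}{\ft I_0^{LG}}\,J^C(\tau^C,-z),
\]
with $\frac{T_1}{1-T_0}=\tau^C$. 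It is exactly this homogeneity in the dilaton direction that cancels the scalar mismatch you hoped to remove by ``matching leading coefficients,'' and it simultaneously establishes the analyticity of $J^C$ at $\ft=0$, which is needed before one can even speak of the identity as an equality of analytic functions. Without this resummation your argument produces the wrong normalization and leaves the convergence issue unaddressed; with it, your outline becomes the paper's proof.
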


\begin{proof}
By \eqref{eq:Jfunctions}, we see that
\[
\U(z)J^{LG}(\tau^{LG},-z)=\overline J^{C}\left(\left[\U(z) J^{LG}(\tau^{LG},-z)\right]_+,-z\right).
\]
By Theorem \ref{thm:CR}, the left-hand side can be rewritten as
\[
\U(z)J^{LG}(\tau^{LG},-z)=\frac{5\widetilde I_0^{CY}(\ft)}{tI_0^{LG}(\ft)}\widetilde J^{CY}(\tau^{C},-z).
\]
On the other hand, we have
\begin{align*}
\overline J^{C}\left(\left[\U(z) J^{LG}(\tau^{LG},-z)\right]_+,-z\right)&= J^C\left(\frac{-5\widetilde I_0^{CY}(\ft)\varphi_0z+5\widetilde I_1^{CY}(\ft)\varphi_1}{tI_0^{LG}(\ft)}+\varphi_0z,-z\right)\\
&=:J^C\left(T_0\varphi_0z+T_1\varphi_1 ,-z\right),
\end{align*}
which is centered at $T_0=T_1=0$. Expanding as a Tayler series, we have
\[
J^C\left(T_0\varphi_0z+T_1\varphi_1 ,-z\right)=\sum_{i,j}\frac{A_{i,j}}{i!j!}T_0^iT_1^j,
\]
the dilaton equation \eqref{eq:dilatonoperator} implies that
\[
A_{i,j}=(i+j-2)A_{i-1,j}.
\]
Using the fact that, for $j\geq 2$,
\begin{equation}\label{eq:analyticidentity}
\sum_{m}{m+j-2 \choose m}T_0^m=\frac{1}{(1-T_0)^{j-1}},
\end{equation}
we see that
\[
J^C\left(T_0\varphi_0z+T_1\varphi_1 ,-z\right)=\frac{5\widetilde I_0^{CY}(\ft)}{tI_0^{LG}(\ft)}J^C\left(\tau^C,-z \right),
\]
concluding the proof.
\end{proof}

\begin{remark}
The identity \eqref{eq:analyticidentity} allows us to write $J^C$, which is a priori centered at $T_0=T_1=0$, as an analytic function at $\ft=0$.
\end{remark}

\begin{corollary}
We have the following genus-zero correspondence:
\[
\widetilde F_0^{CY}(\tau^C)=F^C(\tau^C).
\]
\end{corollary}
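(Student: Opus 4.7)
The plan is to deduce the corollary from Proposition \ref{prop:Jfunctions} by extracting derivative information encoded in the $(-z)^{-1}$ coefficient of the $J$-functions.

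Unpacking the definition of $J^{CY}(\tau\varphi_1, -z)$, the component in the $\varphi^1/(-z)$ direction is $\sum_{n,d}\tfrac{1}{n!}\langle(\tau\varphi_1)^n\,\varphi_1\rangle^{CY}_{0,n+1,d}$, which, by a one-line computation using the definition of $F_0^{CY}$, coincides with $\partial_\tau \widetilde F_0^{CY}(\tau)$, the derivative of the restriction of the genus-zero Gromov-Witten potential to the small state-space. The analogous identity holds on the $C$-side: since $\L^C = \U\L^{LG}$, as follows from the discussion around \eqref{eq:Jfunctions} together with Theorem \ref{thm:CR}, the function $J^C(\tau,-z)$ is by construction the small-state-space slice of the Lagrangian cone $\L^C$, and so its $\varphi^1/(-z)$ component equals $\partial_\tau F_0^C(\tau)$. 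Equating these components in the identity $\widetilde J^{CY}(\tau^C,-z) = J^C(\tau^C,-z)$ of Proposition \ref{prop:Jfunctions} yields
\[
\partial_{\tau^C}\widetilde F_0^{CY}(\tau^C) = \partial_{\tau^C} F_0^C(\tau^C),
\]
and integration in $\tau^C$ produces the desired equality modulo an additive constant.

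To pin down that constant, I would either compare both sides at a convenient initial value of $\tau^C$, using the initial conditions \eqref{eq:initial} together with the dimension constraint $\sum m_i + \sum k_i = n$, or extract a second independent coefficient of the $J$-function equality (for instance, the $\varphi^0/(-z)^2$ component) and invoke the string equation \eqref{eq:string} to read off the matching constant directly. Either approach fixes the ambiguity once the $J$-function equality of Proposition \ref{prop:Jfunctions} has been established.

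The main technical point to verify is the role of the quadratic shift $W(\bq,\bq)/2$ appearing in $\overline F_0^C$ from Theorem \ref{thm:quantize}(2): although this shift modifies the graph-sum expression for $\overline F_0^C$, it is built from the same symplectic data $S(z)$ that defines the $S$-action on Givental's cone, so its restriction to the small state-space is precisely the piece required to ensure that the $\varphi^1/(-z)$ coefficient of $J^C$ computes $\partial_\tau F_0^C$. By Corollary \ref{cor:reduction}, establishing this identity completes the proof of the $g=0$ case of Theorem \ref{thm:ellipticquantization}.
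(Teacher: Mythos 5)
Your main line of attack is the same as the paper's: read off derivative information from the $J$-function identity of Proposition \ref{prop:Jfunctions} and convert it into an identity of potentials. The coefficient extraction is correct --- the $\varphi^1/(-z)$ component of $J^\bullet(\tau,-z)$ is $\partial_{q_0^1}F_0^\bullet$ evaluated at the small state-space point, and Proposition \ref{prop:Jfunctions} equates these on the two sides.

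The gap is in how you fix the additive constant after integrating. Neither of your two proposed remedies clearly works. Comparing ``at a convenient initial value'' is problematic because $\widetilde F_0^{CY}$ is only defined near $\ft=0$ via analytic continuation along the path of Theorem \ref{thm:CR}, so its value at any particular point is exactly the unknown quantity; there is no point where both sides are independently computable a priori. Extracting the $\varphi^0/(-z)^{2}$ component gives you another \emph{derivative}, namely $\partial_{q_1^0}F_0^\bullet$, and the string equation only relates derivatives (correlators with a $\Phi_0$ insertion) to other derivatives --- it never produces the value of $F_0$ itself, so it cannot resolve the integration constant. The tool you need is the \emph{dilaton} equation \eqref{eq:dilatonoperator} at $g=0$: since $\sum_{m,k}q_k^m\partial_{q_k^m}F_0 = 2F_0$ and the evaluation point is $\bq(z)=\tau^C\varphi_1-\varphi_0 z$ after the dilaton shift, one gets
\[
2F_0^\bullet(\tau^C)=\tau^C\,\partial_{q_0^1}F_0^\bullet-\partial_{q_1^0}F_0^\bullet,
\]
and both derivatives on the right are matched by Proposition \ref{prop:Jfunctions} (they are the $J_2$ and $J_3$ of the paper). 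This expresses each potential directly in terms of $J$-function coefficients, so there is no integration and no constant to fix. Replace your constant-fixing step with this dilaton-equation identity and the argument closes.
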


\begin{proof}
By Proposition \ref{prop:Jfunctions}
\[
\left(\frac{\partial F_0^C(\bq)}{\partial q_0^1}\right)_{\bq(z)=\tau^C\varphi_1}=\widetilde{\left(\frac{\partial F_0^{CY}(\bq)}{\partial q_0^1}\right)}_{\bq(z)=\tau^{CY}\varphi_1}=:J_2(\tau^C),
\]
\[
\left(\frac{\partial F_0^C(\bq)}{\partial q_1^0}\right)_{\bq(z)=\tau^C\varphi_1}=\widetilde{\left(\frac{\partial F_0^{CY}(\bq)}{\partial q_1^0}\right)}_{\bq(z)=\tau^{CY}\varphi_1}=:J_3(\tau^C),
\]
and all other partial derivatives vanish at $\bq(z)=\tau^C\varphi_1$. Thus, applying the dilaton equation, we have
\[
2F_0^C(\tau^C)=2\widetilde{F}_0^{CY}(\tau^C)=\tau^CJ_2(\tau^C)-J_3(\tau^C).
\]
\end{proof}

By Corollary \ref{cor:reduction}, this completes the proof of Theorem \ref{thm:ellipticquantization} for $g=0$.

\subsection{Tail Series}

A significant portion of our analysis of the action of $\widehat\U$ on $\D^{LG}(\bt)$ concerns packaging genus-zero tails in the Feynman graph expansions introduced in Section \ref{sec:graphsums}. More specifically, define
\[
T(\bq,z)=\overline\bq(z)+\left(\U_0^{-1}\;\underset{z_f=0}{\Res}\;V(z,z_f)\sum_{k,m}\frac{\varphi^m}{z_f^{k+1}}\frac{\partial \oF_0^{B}(\bq)}{\partial (R^{-1}\bq)_k^m}\right)_{\bq(z)\rightarrow[S(z)\bq(z)]_+}.
\]
Before continuing, let us briefly parse the definition of $T(\bq,z)$. First, since
\[
\oF_0^B(\bq)=\sum_{\Gamma\; :\; g(\Gamma)=0}\underline\Contr(\Gamma),
\]
we see that the partial derivatives
\[
\sum_{k,m}\frac{\varphi^m}{z_f^{k+1}}\frac{\partial \oF_0^{B}(\bq)}{\partial (R^{-1}\bq)_k^m}
\]
specify in each graph contribution a leg with a particular insertion on it. Contracting with $V(z,z_f)$ and taking the residue turns the specified leg into a specified edge. Finally, applying $\U_0^{-1}$ and specializing the variables $\bq(z)\rightarrow[S(z)\bq(z)]_+$, we see that $T(\bq,z)$ is the contribution of all possible genus-zero trees attaching to a specified vertex in the graph contribution for $\oF^C(\bq)$. Adding $\overline\bq(z)$ simply corresponds to the contribution of the degenerate tree. We call $T(\bq,z)$ the \emph{tail series}. The next lemma describes $T(\bq,z)$ explicitly. 

\begin{lemma}\label{lem:vertextail}
With notation as above, we have
\[
T(\bq,z)=\U(z)\cdot\bq(z).
\]
\end{lemma}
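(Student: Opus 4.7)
The plan is to verify the lemma by a direct computation that reduces both $T(\bq, z)$ and $\U(z)\cdot\bq(z)$ to a common closed-form expression. Three ingredients will drive the argument: the descendent presentation $\oJ^B(\bq, -z) - \bq(z) = \sum_{k, m}\frac{\varphi^m}{(-z)^{k+1}}\frac{\partial \oF_0^B(\bq)}{\partial q_k^m}$; the symplectic identities $R(z)R(-z)^* = S(z)S(-z)^* = 1$; and the classical cone identities $\oJ^B(\bq, -z) = R(z)\oJ^A(R\cdot\bq, -z)$ and $\oJ^C(\bq, -z) = S^{-1}(z)\oJ^B(\bq', -z)$ with $\bq'(z) := [S(z)\bq(z)]_+$. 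The second cone identity uses that $\oJ^C(\bq, -z) - \bq(z)$ involves only negative powers of $z$, so $[S(z)\oJ^C(\bq, -z)]_+ = [S(z)\bq(z)]_+$.

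My first step would be to translate $\U(z)\cdot\bq(z)$ into a form compatible with $T$. Chaining the cone identities and using that $\U_0$ is constant in $z$ gives $\U(z)\cdot\bq(z) = \U_0^{-1}[R^{-1}(z)\oJ^B(\bq', -z)]_+$, so it suffices to verify
\[
\U_0\,T(\bq, z) = \bigl[R^{-1}(z)\oJ^B(\bq', -z)\bigr]_+.
\]
Next, I would apply a chain-rule change of coordinates in the definition of $T$, using $R(z) = \sum_j R_j z^j$ to rewrite $\frac{\partial}{\partial (R^{-1}\bq)_k^m}$ in terms of the $\frac{\partial}{\partial q_l^n}$, which together with the descendent presentation yields
\[
\sum_{k, m}\frac{\varphi^m}{z_f^{k+1}}\frac{\partial \oF_0^B(\bq)}{\partial (R^{-1}\bq)_k^m} = R(z_f)^*\bigl[\oJ^B(\bq, z_f) - \bq(-z_f)\bigr].
\]

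The heart of the argument is then the residue identity: setting $g(z) := \oJ^B(\bq, -z) - \bq(z)$, I claim
\[
\Res_{z_f = 0} V(z, z_f)\,R(z_f)^*\,g(-z_f) = \bigl[R^{-1}(z)\,g(z)\bigr]_+.
\]
I would prove this by writing $R(z)V(z, z_f) = (R(z) - R(-z_f))/(z + z_f)$ using the symplectic identity, expanding $\frac{1}{z + z_f}$ as a power series in $z_f/z$, and carefully matching the polar and regular parts of $R^{-1}(z)g(z)$ under the residue. Adding the leg contribution $R^{-1}(z)\bq'(z)$ and substituting $\bq\to\bq'$ produces $[R^{-1}(z)\oJ^B(\bq', -z)]_+$ exactly, completing the identification $T(\bq, z) = \U(z)\cdot\bq(z)$.

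The main obstacle will be the residue calculation itself, which requires careful tracking of which factors are regular or polar in $z_f$ at the origin. The functions $V(z, z_f)$, $R(-z_f)$, and $R(z_f)^*\bq(-z_f)$ are all regular at $z_f = 0$, while only $g(-z_f)$ is polar; the precise interaction of these pieces, mediated by the symplectic identity $R(-z_f)^{-1} = R(z_f)^*$ and the power series expansion of $\frac{1}{z + z_f}$, produces exactly the positive-$z$ projection. Once this residue identity is in hand, the remaining assembly reduces to routine bookkeeping with the cone and symplectic identities.
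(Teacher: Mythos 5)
Your proposal is correct and follows essentially the same route as the paper: the chain rule introduces $R(z_f)^*$, the symplectic identity collapses $V\cdot R^*$ to a difference quotient, the residue computes to the positive-part projection $[R^{-1}(z)\oJ^B(\bq,-z)]_+-R^{-1}(z)\bq(z)$, and the cone identity $\oJ^B([S\bq]_+,-z)=S(z)\oJ^C(\bq,-z)$ together with the definition of $\overline\bq$ assembles this into $\U(z)\cdot\bq(z)$. The only cosmetic difference is that you apply $R$ to the $z$-slot rather than $R^*$ to the $z_f$-slot before taking the residue; just be sure to use a single consistent expansion of $(z+z_f)^{-1}$ when splitting the difference quotient into its two (individually non-polynomial) terms.
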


\begin{proof}
We compute directly:
\begin{align*}
\underset{z_f=0}{\Res}\;V(z,z_f)\sum_{k,m}\frac{\varphi^m}{z_f^{k+1}}\frac{\partial \oF_0^{B}(\bq)}{\partial (R^{-1}\bq)_k^m}&=\underset{z_f=0}{\Res}\;V(z,z_f)R(z_f)^*\sum_{k,m}\frac{\varphi^m}{z_f^{k+1}}\frac{\partial}{\partial {q_k^m}}F_0^B(\bt)\\
&=\underset{z_f=0}{\Res}\;\frac{R(z_f)^*-R(-z)^*}{z+z_f}\sum_{k,m}\frac{\varphi^m}{z_f^{k+1}}\frac{\partial \oF_0^B(\bq)}{\partial {q_k^m}}\\
&=\underset{z_f=0}{\Res}\;\frac{R(z_f)^*}{z+z_f}\sum_{k,m}\frac{\varphi^m}{z_f^{k+1}}\frac{\partial \oF_0^B(\bq)}{\partial {q_k^m}}\\
&=\underset{z_f=0}{\Res}\;\frac{R(z_f)^*}{z+z_f}\left(\oJ^B(\bq,z_f)-\bq(-z_f)\right).
\end{align*}
Therefore,
\begin{align*}
\underset{z_f=0}{\Res}\;V(z,z_f)\sum_{k,m}\frac{\varphi^m}{z_f^{k+1}}\frac{\partial \oF_0^{B}(\bq)}{\partial (R^{-1}\bq)_k^m}&=\Res_{z_f=0}\frac{R^{-1}(z_f)}{z_f-z}\left(\oJ^B(\bq,-z_f)-\bq(z_f)\right)\\
&=\left[R^{-1}(z)\oJ^B(\bq,-z) \right]_+-R^{-1}(z)\bq(z).
\end{align*}
To obtain $T(\bq,z)$ from this, we multiply both sides by $\U_0^{-1}$, substitute $\bq(z)\rightarrow[S(z)\bq(z)]_+$, and add $\overline\bq(z)$, obtaining
\begin{align*}
T(\bq,z)
&=\U_0^{-1}\left[R^{-1}(z)\oJ^B\left([S(z)\bq(z)]_+,-z\right)\right]_+\\
&=\U_0^{-1}\left[R^{-1}(z)S(z)\oJ^C\left(\bq(z),-z\right)\right]_+\\
&=\U(z)\cdot\bq(z).
\end{align*}
\end{proof}

\section{Genus-One Correspondence}\label{sec:genusone}

In regards to the genus-one potential, there are two types of graphs which appear: the \emph{vertex-type graphs} consist of trees with a unique genus-one vertex, and the \emph{loop-type graphs} consist of graphs $\Gamma$ with $b_1(\Gamma)=1$ and with $g_v=0$ for all $v\in V$. We separate the contributions from the two types of graphs, and we write
\[
F_1^C(\bt)=F_1^C(\bt)_V+F_1^C(\bt)_L.
\]
We now analyze these contributions.

\subsection{Vertex-Type Graphs}\label{sec:vertex}

By definition of the tail series, the contribution from the vertex-type graphs to $\oF_1^C$ is equal to
\[
\oF_1^C(\bq)_V=\oF_1^{LG}\left(T(\bq,z)\right).
\]
Restricting to the small state-space, we obtain the following result.

\begin{proposition}\label{prop:vertextail}
We have 
\[
F_1^C(\tau^{C})_V= {F_1^{LG}\left( \tau^{LG} \right)}+\frac{25}{3}\left(\log\left(  \ft{I_0^{LG}(\ft)}\right)-\log\left(5\widetilde  I_0^{CY}(\ft) \right)\right)
\]
where the variables are related by $\tau^C:=\widetilde \tau^{CY}=\frac{\widetilde I_1^{CY}(\ft)}{\widetilde I_0^{CY}(\ft)}$, and $\tau^{LG}=\frac{I_1^{LG}(\ft)}{I_0^{LG}(\ft)}$.
\end{proposition}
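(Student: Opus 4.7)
The plan is to combine Lemma \ref{lem:vertextail} with Proposition \ref{prop:Jfunctions} and Theorem \ref{thm:CR} to write the tail series $T(\bq,z)$ explicitly on the small state-space, and then to reduce $\oF_1^{LG}(T(\bq,z))$ to the FJRW small state-space potential via repeated application of the dilaton equation.

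First I would specialize the tail series. By the formula preceding the proposition, $\oF_1^C(\bq)_V=\oF_1^{LG}(T(\bq,z))$, and Lemma \ref{lem:vertextail} gives $T(\bq,z)=\U(z)\cdot\bq(z)=[\U(z)^{-1}\oJ^C(\bq,-z)]_+$. Restricting to the small state-space $\bq(z)=\tau^C\varphi_1-\varphi_0 z$ and using Proposition \ref{prop:Jfunctions}, we have $\oJ^C(\bq,-z)=\widetilde J^{CY}(\tau^C,-z)$. The genus-zero LG/CY identity of Theorem \ref{thm:CR}, together with the genus-zero mirror theorem for FJRW invariants, then rewrites this as
\[
\U(z)^{-1}\widetilde J^{CY}(\tau^C,-z)=\frac{\ft\, I_0^{LG}(\ft)}{5\widetilde I_0^{CY}(\ft)}\, J^{LG}(\tau^{LG},-z).
\]
Taking the $+$-part and setting $C_0:=\frac{\ft\, I_0^{LG}(\ft)}{5\widetilde I_0^{CY}(\ft)}$ yields $T(\bq,z)=C_0(\tau^{LG}\phi_1-\phi_0 z)$. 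Through the dilaton shift, this corresponds to the FJRW input $\bt(z)=(1-C_0)\phi_0 z+C_0\tau^{LG}\phi_1$.

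Next I would evaluate $F_1^{LG}$ at this input. Expanding the potential and stratifying correlators by the number $k$ of descendent insertions $\phi_0\psi$, I would apply the dilaton equation iteratively. For $n\geq 1$ primary $\phi_1$ insertions and $k$ descendents, the dilaton equation gives $\langle (\phi_0\psi)^k(\phi_1)^n\rangle_{1,n+k}^{LG}=\frac{(n+k-1)!}{(n-1)!}\langle (\phi_1)^n\rangle_{1,n}^{LG}$, while the pure descendent correlators reduce to the initial datum $\langle\phi_0\psi\rangle_{1,1}^{LG}=-\tfrac{25}{3}$ yielding $\langle(\phi_0\psi)^k\rangle_{1,k}^{LG}=-\tfrac{25}{3}(k-1)!$ for $k\geq 1$. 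Summing the resulting geometric series via $\sum_{k\geq 0}\binom{n+k-1}{k}(1-C_0)^k=C_0^{-n}$ and $\sum_{k\geq 1}(1-C_0)^k/k=-\log(C_0)$, the primary-only terms collapse to $F_1^{LG}(C_0\tau^{LG}/C_0)=F_1^{LG}(\tau^{LG})$, while the descendent-only terms collect to $\tfrac{25}{3}\log(C_0)$. Substituting the explicit form of $C_0$ gives the claimed identity.

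The main obstacle is verifying that the small state-space structure is genuinely preserved under the tail operation, so that the tail series really has the form $C_0(-z\phi_0+\tau^{LG}\phi_1)$ rather than involving higher descendents — this is precisely where Proposition \ref{prop:Jfunctions} and the $+$-truncation in the definition of $T$ do the essential work, since $J^{LG}(\tau^{LG},-z)=-z\phi_0+\tau^{LG}\phi_1+\mathcal{O}(z^{-1})$. The combinatorial sum is routine once that reduction is in hand, and the appearance of the constant $\tfrac{25}{3}$ is consistent with the dilaton-compatibility already established in Lemma \ref{prop:reduction}.
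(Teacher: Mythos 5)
Your proposal is correct and follows essentially the same route as the paper: identify the tail series via Lemma \ref{lem:vertextail}, compute it on the small state-space using Proposition \ref{prop:Jfunctions} together with the genus-zero mirror theorems and Theorem \ref{thm:CR} to get the input $(1-C_0)\phi_0 z+C_0\tau^{LG}\phi_1$, and then collapse the result with the dilaton equation. The only difference is that you spell out the dilaton-equation resummation (the binomial and logarithmic series) explicitly, whereas the paper invokes it in one line; your computation matches the paper's answer $F_1^{LG}(\tau^{LG})-\log(C_0)\langle\psi_1\phi_0\rangle_{1,1}^{LG}$ exactly.
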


\begin{proof}
By Lemma \ref{lem:vertextail}, we have
\begin{align*}
\oF_1^C(\bq)_V&=\oF_1^{LG}\left(T(\bq,z) \right)\\
&=\oF_1^{LG}\left( \U(z)\cdot\bq(z) \right)\\
&=\oF_1^{LG}\left(\left[\U(z)^{-1} \oJ^{C}(\bq,-z)\right]_+ \right).
\end{align*}

Specializing $\bt= \widetilde{\tau}^{CY}$, the GW mirror theorem (Theorem \ref{thm:cymirror}) and the genus-zero LG/CY correspondence (Proposition \ref{prop:Jfunctions}) imply that
\begin{align*}
F_1^C(\tau^C)_V&=F_1^{LG}\left(\left[\U(z)^{-1} \widetilde{J}^{CY}(\tau^{C},-z)\right]_++z\phi_0 \right)\\
&=F_1^{LG}\left(\frac{- \ft I_0^{LG}(\ft)z\phi_0+ \ft I_1^{LG}(\ft)\phi_1}{5 \widetilde I_0^{CY}(\ft)}+z\phi_0 \right)\\
&=F_1^{LG}\left(\frac{I_1^{LG}(\ft)}{  I_0^{LG}(\ft)} \right)-\log\left(\frac{  \ft I_0^{LG}(\ft)}{5 \widetilde I_0^{CY}(\ft)} \right)\left\langle \psi_1\phi_0\right\rangle_{1,1}^{LG},
\end{align*}
where the final equality follows from the dilaton equation.
\end{proof}

\subsection{Loop-Type Graphs}\label{sec:loop}

In order to study the loop-type graph contributions to $F_1^C$, we consider the one-form $d\oF_1^C(\bq)_L$, which packages loop-type graph contributions with one specified leg. We break the loop at the vertex where the tree supporting the specified leg attaches and analyze the resulting genus-zero graph contributions. Define the two-tensors
\[
\oV^\bullet(\bq,w,z):=\sum_m\frac{\varphi_m\otimes\varphi^m}{w+z}+\sum_{m,m',k,k'}\frac{\varphi^m\otimes\varphi^{m'}}{w^{k+1}z^{k'+1}}\frac{\partial^2\oF^\bullet(\bq)}{\partial q_k^m\partial q_{k'}^{m'}}.
\]
The next lemma determines $d\oF_1^C(\bt)_L$ in terms of $\oV^\bullet(\bq,w,z)$.

\begin{lemma}\label{lem:looptail}
We have
\begin{align}
\nonumber d\oF_1^C(\bq)_L=\frac{1}{2}\Res_{w=0\atop z=0}\; \big(d\oV^{LG}\left(\U\cdot\bq,w,z\right),\U^{-1}(w)\otimes \U^{-1}(z) \oV^{C}(\bq,-w-z)  \big)^{LG},
\end{align}
where the pairing contracts along each factor of the two-tensors.
\end{lemma}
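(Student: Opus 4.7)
My plan is to prove the formula by manipulating the Feynman-graph expansion of $\oF_1^C(\bq)_L$ from Section \ref{sec:graphsums}. After applying $d$ to introduce a distinguished leg, one obtains a sum over loop-type graphs with a marked leg. In each such graph, the marked leg sits on a tree whose root attaches to the unique loop at a loop-vertex $v_0$. The key operation is to ``break'' the loop at $v_0$ by introducing two formal cotangent variables $w,z$ at the two loop-flags of $v_0$, producing a genus-zero graph with three distinguished flags at $v_0$: the root of the leg-bearing tree and the two broken loop-ends.

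Next, I organize the resulting genus-zero graph into a \emph{local} piece near $v_0$ and a \emph{loop-branch} piece connecting the two loop-ends through the rest of the opened loop. For the local piece, the tail-series identity (Lemma \ref{lem:vertextail}) applies: summing over all trees attached to the unmarked flags of $v_0$ replaces them with insertions of $T(\bq, z_f) = \U(z_f)\cdot\bq(z_f)$, and summing further over the valence and insertion data at $v_0$ produces exactly $d\oV^{LG}(\U\cdot\bq, w, z)$, with the differential recording the distinguished leg. For the loop-branch piece, I recognize the sum over genus-zero trees built from $LG$-correlator vertices and $R$-edge propagators, including all attached tails at intermediate vertices, as the tree reconstruction of the $C$-side genus-zero two-point function. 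The $\U_0^{-1}$ insertions at the two loop-flags of $v_0$ combine with the $R$- and $S$-dressing along the branch to produce the factor $\U^{-1}(w)\otimes\U^{-1}(z)\,\oV^C(\bq, -w, -z)$.

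Assembling the two pieces by contracting along the broken loop-ends via the pairing $(-,-)^{LG}$ on each tensor factor, and taking the residues in $w,z$, yields the right-hand side of the stated formula. The combinatorial factor $1/2$ arises from the $w\leftrightarrow z$ exchange symmetry of the two loop-ends at $v_0$. The hardest step will be the precise identification of the loop-branch sum with $\U^{-1}(w)\otimes\U^{-1}(z)\,\oV^C(\bq, -w, -z)$: this requires promoting the genus-zero correspondence $\L^C = \widetilde{\L^{CY}}$ (Proposition \ref{prop:Jfunctions}) to an identification of the full genus-zero two-point tensors, including the ``free'' term $\sum_m\varphi_m\otimes\varphi^m/(w+z)$ that arises from the degenerate loop-branch consisting of a single $R$-edge with no intermediate vertex. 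It also requires carefully tracking the $\U_0^{-1}$ insertions at $v_0$ together with the $R$- and $S$-matrix factors along the branch, so that they assemble into exactly $\U^{-1} = \U_0^{-1} R^{-1} S$ at each loop-end. Once this identification is achieved, the remaining residue manipulations will be parallel to those in the proof of Lemma \ref{lem:vertextail}.
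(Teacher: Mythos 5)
Your plan follows the same basic strategy as the paper's proof: break the loop at the vertex supporting the marked leg, use the tail series of Lemma \ref{lem:vertextail} to package the attached trees into $d\oV^{LG}(\U\cdot\bq,w,z)$, recognize the opened loop as a two-point tensor, and account for the $\tfrac{1}{2}$ by the swap of the two loop-flags. Two comments. First, the paper organizes the argument in two steps that you may find easier to execute: it first establishes the analogous identity one level down, $d\oF_1^B(\bq)_L=\tfrac{1}{2}\Res\big(d\oV^A(T(\bq,z),w,z),\,R^{-1}(w)\otimes R^{-1}(z)\,\oV^B(\bq,-w,-z)\big)$, where only the Feynman expansion of $\widehat R$ and the propagator $V(w,z)=\frac{1-R(-w)^*R(-z)}{w+z}$ are involved, and then passes to $C$ via the clean transformation laws $\oV^B(S^{-1}\cdot\bq,-w,-z)=S(w)\otimes S(z)\,\oV^C(\bq,-w,-z)$, $\oV^A(R\cdot(S^{-1}\cdot\bq),w,z)=\U_0\otimes\U_0\,\oV^{LG}(\U\cdot\bq,w,z)$, and $\U_0^*=\U_0^{-1}$; this spares you from tracking the $S$- and $\U_0$-dressings inside the loop-branch sum. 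Second, and more substantively, you mislocate the main difficulty: the identification of the loop-branch with $\U^{-1}(w)\otimes\U^{-1}(z)\,\oV^C(\bq,-w,-z)$ does \emph{not} require promoting the genus-zero correspondence of Proposition \ref{prop:Jfunctions} to two-point tensors. The lemma is a formal identity internal to the quantization formalism --- $\oV^C$ is by definition the two-point tensor of the graph-sum potential $\oF_0^C$ --- and the loop-branch identification is a residue manipulation exactly parallel to the computation in Lemma \ref{lem:vertextail}, with two distinguished flags in place of one. The Calabi--Yau side and the mirror theorems enter only afterwards, in Lemma \ref{lem:looptail2}; routing this step through $\widetilde{\L^{CY}}$ would import input that is neither available nor needed here.
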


\begin{proof}
By arguing as in the proof of Lemma \ref{lem:vertextail}, we have
\begin{equation}\label{loop}
 d\oF_1^B(\bq)_L=\frac{1}{2}\Res_{w=0 \atop z=0}\;\left(d\oV^A(T(\bq,z),w,z),R^{-1}(w)\otimes R^{-1}(z) \oV^B(\bq,-w,-z)\right).
\end{equation}
Therefore, to obtain $d\oF_1^C(\bq)$, we must replace $\bq$ in \eqref{loop} with $S^{-1}\cdot\bq$. Using the facts that $\oJ^B(S^{-1}\cdot \bq,-z)=S(z)\oJ^C(\bq,-z)$ and that $\oV^\bullet(\bq,-w,-z)$ is obtained from $\oJ^\bullet(\bq,-z)$ by applying the operator
\[
\sum_{k,m}\frac{\varphi^m}{(-w)^{k+1}}\otimes\frac{\partial}{\partial{q_k^m}}=S^{-1}(w)\sum_{k,m}\frac{\varphi^m}{(-w)^{k+1}}\otimes\frac{\partial}{\partial{(S^{-1}\cdot\bq)_k^m}},
\]
we have
\[
\oV^B(S^{-1}\cdot\bq,-w,-z)=S(w)\otimes S(z)\;  \oV^C(\bq,-w-z).
\]
Therefore, the second term in the pairing in \eqref{loop} becomes
\[
R(w)^{-1}S(w)\otimes R(z)^{-1}S(z)\cdot \oV^C(\bq,-w-z).
\]
Similarly, the first term becomes
\[
d\oV^A(R(z)\cdot(S^{-1}\cdot\bq),w,z)=\U_0\otimes\U_0\; d\oV^{LG}\left(\U\cdot\bq,w,z\right).
\]
The Lemma then follows from the fact that $\U_0^*=\U_0^{-1}$.
\end{proof}

If we turn off the descendent parameters by setting $\bt=t$, then the string and WDVV equations (see, for example, \cite{Coates} Proposition 1.4.1) imply that
\[
V^{CY}(t,z,w)=\frac{\sum_m S^{CY}(t,w)^*(\varphi_m)\otimes S^{CY}(t,z)^*(\varphi^m)}{w+z}.
\]
and
\[
V^{LG}(t,z,w)=\frac{\sum_m S^{LG}(t,w)^*(\phi_m)\otimes S^{LG}(t,z)^*(\phi^m)}{w+z}.
\]
Therefore, by further specializing $t=\tau^C$ $(=\widetilde \tau^{CY})$, using the genus-zero correspondence of Theorem \ref{thm:ellipticquantization}, and applying the dilaton equation as in the proof of Proposition \ref{prop:vertextail}, the residue in Lemma \ref{lem:looptail} simplifies to the following.

\begin{lemma}\label{lem:looptail2}
We have
\begin{align}\label{eq:residue}
dF_1^C(\tau^C)_L=
&\nonumber \frac{1}{2}\Res_{w=0 \atop z=0}\;\bigg(d\frac{\sum_m \U(-w)S^{LG}(\tau^{LG},w)^*(\phi_m)\otimes \U(-z)S^{LG}(\tau^{LG},z)^*(\phi^m)}{w+z}, \\
& \hspace{5cm}\frac{\sum_m \widetilde S^{CY}(\tau^C,-w)^*(\varphi_m)\otimes (z)\widetilde S^{CY}(\tau^C,-z)^*(\varphi^m)}{-w-z}\bigg)^{CY}.
\end{align}
\end{lemma}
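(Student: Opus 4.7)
The plan is to reduce Lemma \ref{lem:looptail} to the claim by specializing each ingredient to the small state-space slice $\bq(z) = \tau^C\varphi_1$ and then applying three inputs: the string and WDVV factorizations of $V^{LG}$ and $\widetilde V^{CY}$ displayed just before the lemma, the genus-zero correspondence of Proposition \ref{prop:Jfunctions}, and the symplectic property of $\U$ used to move the operators $\U^{-1}(w)\otimes \U^{-1}(z)$ across the pairing.

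First I would compute $\U\cdot\bq$ at the specialization. Combining Theorem \ref{thm:CR} with Proposition \ref{prop:Jfunctions} yields
\[
\U\cdot\bq\Big|_{\bq = \tau^C\varphi_1} = \frac{\ft I_0^{LG}(\ft)}{5\widetilde I_0^{CY}(\ft)}\bigl(-z\phi_0 + \tau^{LG}\phi_1\bigr),
\]
a scalar multiple of the dilaton-shifted $LG$ small state-space point. A key observation is that $\oV^{LG}(\bq,w,z)$ is invariant under the rescaling $\bq\mapsto\lambda\bq$: the $g=0$ case of the dilaton operator \eqref{eq:dilatonoperator} forces $\oF_0^{LG}$ to be Euler-homogeneous of degree two in $\bq$, so its mixed second derivatives --- and hence $\oV^{LG}$ --- are homogeneous of degree zero. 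Consequently $\oV^{LG}$ evaluated at $\U\cdot\bq$ coincides with $\oV^{LG}$ evaluated at $-z\phi_0 + \tau^{LG}\phi_1$, where the string and WDVV identity applies directly and produces the $S^{LG}(\tau^{LG},\cdot)^*$ factors on the right-hand side. The analogous reduction on the $CY$ side uses the genus-zero correspondence to rewrite $\oV^C(\tau^C\varphi_1,-w,-z)$ as $\widetilde V^{CY}(\tau^C,-w,-z)$, which is then factored via string and WDVV into the $\widetilde S^{CY}(\tau^C,-\cdot)^*$ factors.

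Next I would convert the $\cH^{LG}$-pairing of Lemma \ref{lem:looptail} into the $\cH^{CY}$-pairing stated in the lemma. The operators $\U^{-1}(w)\otimes \U^{-1}(z)$ act on the $CY$ factor inside an $\cH^{LG}$-pairing, and the symplecticity of $\U$ translates to the adjoint identity $(\U^{-1}(w))^* = \U(-w)$ between the two Poincar\'e pairings. Moving these operators across the pairing swaps $(\cdot,\cdot)^{LG}$ for $(\cdot,\cdot)^{CY}$ and converts $\U^{-1}(w)$ and $\U^{-1}(z)$ into $\U(-w)$ and $\U(-z)$ acting on the $S^{LG}(\tau^{LG},\cdot)^*(\phi_m)$ factors produced in the previous step. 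Since these operators depend only on the residue variables, they commute with $d$ and sit inside the differential, reproducing the displayed formula.

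The main technical delicacy is the dilaton-homogeneity argument that eliminates the scalar $\ft I_0^{LG}/(5\widetilde I_0^{CY})$ at the specialization of $\U\cdot\bq$; this is precisely the mechanism that prevents the appearance of a logarithmic correction analogous to the one recorded in Proposition \ref{prop:vertextail}. The order of operations in applying the string and WDVV identities, which require a primary (non-descendent) argument, must also be respected, so one restricts to the small state-space slice before factoring. The symplectic manipulation of the pairing is then routine once the adjoint formula for $\U$ is in hand.
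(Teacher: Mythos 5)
Your proposal is correct and follows essentially the same route as the paper, which derives the lemma from Lemma \ref{lem:looptail} by specializing to the small state-space, invoking the string/WDVV factorizations of $V^{LG}$ and $V^{CY}$, the genus-zero correspondence, and the dilaton equation exactly as you describe. Your homogeneity argument for why the scalar $\ft I_0^{LG}/(5\widetilde I_0^{CY})$ disappears from the second derivatives of $\oF_0^{LG}$ is precisely the intended content of the paper's phrase ``applying the dilaton equation as in the proof of Proposition \ref{prop:vertextail},'' and the adjoint move $(\U^{-1}(w))^*=\U(-w)$ converting the $LG$ pairing to the $CY$ pairing is the remaining implicit step, which you also supply.
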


In order to further study the residue \eqref{eq:residue}, it will be useful to work in canonical bases for quantum products. Although the GW and FJRW invariants associated to the quintic threefold do not yield semi-simple Frobenius manifolds, they both admit twisted extensions in genus zero that do admit semi-simple Frobenius manifolds. In the next section, we recall and study the twisted extensions.

\section{Interlude on Twisted Invariants}\label{sec:twisted}

In this section, we describe semi-simple twisted theories that extend the genus-zero GW and FJRW invariants.

\subsection{Twisted GW and $5$-spin Invariants}

Twisted GW invariants associated to the quintic threefold take inputs from the extended state-space $\overline H^{CY}$ with basis $\varphi_0,\dots,\varphi_4$ where $\varphi_i=c_1(\cO(1))^m\in H^*(\bP^4,\bC)$. To define them, we consider the natural $(\bC^*)^5$-action on $\bP^4$:
\[
(\alpha_1\dots,\alpha_5)\cdot(z_1,\dots,z_5):=(\alpha_1z_1,\dots,\alpha_5z_5).
\]
There is an induced $(\bC^*)^5$-action on $\M_{g,n}(\bP^4,d)$ and a natural lift to $R\pi_*\L^{\otimes 5}$. Lifting the $\varphi_i$ to equivariant cohomology where $\prod(\varphi_i-\lambda_i)=0$, the twisted GW invariants are defined by
\[
\langle\varphi_{m_1}\psi^{a_1}\cdots\varphi_{m_n}\psi^{a_n}\rangle_{g,n,d}^{CY,\lambda}:=\int_{[\M_{g,n}(\bP^4,d)]^\vir}\left(\prod_{i=1}^n\ev_i^*(\varphi_{m_i})\psi_i^{a_i}\right)e_{(\bC^*)^5}(R\pi_*\L^{\otimes 5}),
\]
where $e_{(\bC^*)^5}(-)$ is the equivariant Euler class. These invariants take values in localized equivariant cohomology
\[
H_{\loc}^*(\B(\bC^*)^5,\bC)=\bC[\lambda_1^{\pm 1},\dots,\lambda_5^{\pm 1}].
\]
We recover the genus-zero GW invariants of the quintic by restricting the genus-zero twisted invariants to the ambient state-space $H^{CY}\subset\overline H^{CY}$ and taking the non-equivariant limit $\lambda_i=0$. We define the \emph{shifted twisted GW invariants} by
\[
\left\langle\left\langle\varphi_{m_1}\psi^{a_1}\cdots\varphi_{m_n}\psi^{a_n} \right\rangle\right\rangle^{CY,\lambda}_{g,n}(\tau):=\sum_d\sum_{k\geq 0}\frac{\tau^k}{k!}
\left\langle\varphi_{m_1}\psi^{a_1}\cdots\varphi_{m_n}\psi^{a_n}\; \varphi_1\cdots \varphi_1 \right\rangle^{CY,\lambda}_{g,n+k,d} .
\]

We are primarily interested in the specialization $\lambda_i=\xi^i\lambda$ where $\xi=\exp(2\pi\ri/5)$. Since the unspecialized correlators are symmetric in $\{\lambda_i\}$, the specialized correlators are Laurent polynomials in $\lambda^5$. The CY $I$-function can be extended to the (specialized) twisted setting:
\[
I^{CY,\lambda}(\fq,z):=z\varphi_0\sum_{d\geq 0}\fq^{\varphi_1/z+d}\frac{\prod_{k=1}^{5d}(5\varphi_1+kz)}{\prod_{k=1}^d\left((\varphi_1+kz)^5-\lambda^5\right)}
\]
where $\varphi_1^a:=\lambda^{5\left\lfloor\frac{a}{5}\right\rfloor}\varphi_a$.

Analogously, twisted $5$-spin invariants take inputs from the extended state-space $\overline H^{LG}$ with basis $\phi_0,\dots,\phi_4$. To define them, we consider the natural $(\bC^*)^5$-action on $L^{\oplus 5}$. This induces an action on $R\pi_*\L(-\Sigma_5)^{\oplus 5}$, where $\Sigma_5$ is the universal divisor of untwisted points. The twisted $5$-spin invariants are defined by
\[
\langle\phi_{m_1}\psi^{a_1}\cdots\phi_{m_n}\psi^{a_n} \rangle_{g,n}^{LG,\lambda} :=5^{2-2g}\int_{\left[\M_{g,\vec m+\vec 1}^{1/5}\right]}\left(\prod_{i=1}^n\psi_i^{a_i}\right)e_{(\bC^*)^5}((-R\pi_*\L(-\Sigma_5)^{\oplus 5})^\vee),
\]
taking values in
\[
H_{\loc}^*(\B(\bC^*)^5,\bC)=\bC[\lambda_1^{\pm 1},\dots,\lambda_5^{\pm 1}].
\]
We recover the genus-zero FJRW invariants associated to the quintic by restricting the genus-zero twisted invariants to the narrow state-space $H^{LG}\subset\overline H^{LG}$and taking the non-equivariant limit $\lambda_i=0$. We define the \emph{shifted twisted $5$-spin invariants} by
\[
\left\langle\left\langle\varphi_{m_1}\psi^{a_1}\cdots\varphi_{m_n}\psi^{a_n} \right\rangle\right\rangle^{LG,\lambda}_{g,n}(\tau):=\sum_{k\geq 0}\frac{\tau^k}{k!}
\left\langle\phi_{m_1}\psi^{a_1}\cdots\phi_{m_n}\psi^{a_n}\; \phi_1\cdots \phi_1 \right\rangle^{LG,\lambda}_{g,n+k} .
\]
As in the CY case, we are primarily interested in the specialization $\lambda_i=\xi^i\lambda$. The LG $I$-function can be extended to the (specialized) twisted setting:
\[
I^{LG,\lambda}(\ft,z)=z\sum_{a\geq 0}\frac{\ft^a}{z^aa!}\prod_{0< k<\frac{a+1}{5} \atop \langle k \rangle = \langle \frac{a+1}{5}\rangle}\left((kz)^5+\lambda^5\right)\phi_a.
\]

Notice that $I^{CY,\lambda}$ is annihilated by the Picard-Fuchs operator:
\begin{equation}\label{eq:cypf}
-\left(\fq\frac{d}{d\fq} \right)^5+\left(\frac{\lambda}{z}\right)^5+\fq\prod_{i=1}^5\left(5\fq\frac{d}{d\fq}+i\right)
\end{equation}
while $\ft I^{LG,\lambda}$ is annihilated by the Picard-Fuchs operator:
\begin{equation}\label{eq:lgpf}
\left( \frac{1}{5}\ft\frac{d}{d\ft}\right)^5 + \left(\frac{\lambda}{z}\right)^5 - \ft^{-5}\prod_{i=1}^5\left(\ft\frac{d}{d\ft}-1\right)
\end{equation}
Moreover, the differential operators \eqref{eq:cypf} and \eqref{eq:lgpf} agree upon setting $\fq^{-1}=\ft^5$.

\subsection{Genus-Zero Computations}

In what follows, we use $\Phi_m$ to denote $\varphi_m$ or $\phi_m$, depending on the context, and we use $x$ to denote $\fq$ or $\ft$. For $\bullet=CY$ or $LG$, we study the semi-simple Frobenius manifold on $\overline H^{\bullet}\otimes\bC[\lambda^{\pm 5}]$ where the pairing is defined by
\[
(\Phi_a,\Phi_b)^{\bullet,\lambda}:=\langle\langle\Phi_a\;\Phi_b\;\Phi_0 \rangle\rangle^{\bullet,\lambda}_{0,3}
\]
and the quantum product is defined by
\begin{equation}\label{eq:quantumproduct}
\Phi_a\star_\tau^\bullet\Phi_b:=\sum_{m}\langle\langle\Phi_a\;\Phi_b\;\Phi_m \rangle\rangle^{\bullet,\lambda}_{0,3}\Phi^m,
\end{equation}
where $\Phi^m$ is dual to $\Phi_m$ under the pairing.

For any $F(x,z)\in\bC[[x,z^{-1}]]$, define
\[
D^\bullet=\begin{cases}
\fq\frac{d}{d\fq} & \bullet=CY,\\
\frac{d}{d\ft} & \bullet=LG,
\end{cases}
\]
and define the Birkhoff factorization operator
\[
\mathbf M^\bullet F(x,z):=zD^\bullet\frac{F(x,z)}{F(x,\infty)},
\]
where, in the presence of state-space insertions, we set $\Phi_i=1$ in the denominator.

We inductively define series $I^{\bullet}_{p,q}(x)$ by
\begin{align}\label{eq:idecomp}
I^{\bullet}_{0,q}(x) := I^{\bullet,\lambda}_q(x),\quad \text{ and }\quad I^{\bullet,\lambda}_{p,q}(x) := D^\bullet \left( \frac{I^{\bullet}_{p-1,q}(x)}{I^{\bullet}_{p-1,p-1}(x)} \right) \, \text{ for $q\geq p>0$, }
\end{align}
so that
\[
(\mathbf  M^\bullet)^p  (I^{\bullet,\lambda}(x,z)/z) = \sum_{q\geq 0} I^{\bullet}_{p,p+q}(x) z^{-q} \Phi_{p+q}
\]
for $p\geq 0$.

We have the following expression of twisted $S$-operators in terms of $I$-functions.

\begin{proposition}\label{prop:si}
Define the twisted $S$-operators by
\[
S^{\bullet,\lambda}(x,z)^*(\Phi):=\Phi+\sum_m\langle\langle\Phi\;\frac{\Phi_m}{z-\psi} \rangle\rangle^{\bullet,\lambda}_{0,2}(x)\Phi^m.
\]
Then
\[
S^{\bullet,\lambda}(x,z)^*(\Phi_m):=\frac{(\mathbf M^\bullet)^m (I^{\bullet,\lambda}(x,z)/z)}{I^{\bullet}_{m,m}(x)}.
\]
\end{proposition}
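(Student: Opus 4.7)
The strategy is induction on $m$, using the twisted mirror theorems for the base case and the quantum differential equation together with the recursion defining the series $I^{\bullet}_{p,q}$ for the inductive step.

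For the base case $m = 0$, twisted extensions of Theorems \ref{thm:cymirror} and \ref{thm:lgmirror} (which follow from standard Givental-style arguments in the equivariant setting) give $J^{\bullet,\lambda}(\tau^{\bullet,\lambda},z) = I^{\bullet,\lambda}(x,z)/I_0^{\bullet,\lambda}(x)$ under the mirror map $\tau^{\bullet,\lambda} = I_1^{\bullet,\lambda}/I_0^{\bullet,\lambda}$. Since $\Phi_0$ is the unit of the quantum product, the string equation gives $S^{\bullet,\lambda}(\tau,z)^*(\Phi_0) = J^{\bullet,\lambda}(\tau,z)/z$, which immediately yields the proposition for $m=0$ after substituting the mirror map, as $I^\bullet_{0,0} = I_0^{\bullet,\lambda}$ and $(\mathbf{M}^\bullet)^0$ is the identity.

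For the inductive step, suppose $(\mathbf{M}^\bullet)^m(I^{\bullet,\lambda}/z) = I^\bullet_{m,m} \cdot S^{\bullet,\lambda}(x,z)^*(\Phi_m)$. Since $S^*(\Phi_m) = \Phi_m + \mathcal{O}(z^{-1})$, the leading $z$-coefficient of $I^\bullet_{m,m} S^*(\Phi_m)$ (after setting $\Phi_i \to 1$ in the denominator of $\mathbf{M}^\bullet$) equals $I^\bullet_{m,m}$, so
\[
(\mathbf{M}^\bullet)^{m+1}(I^{\bullet,\lambda}/z) = zD^\bullet\, S^{\bullet,\lambda}(x,z)^*(\Phi_m).
\]
The chain rule $D^\bullet = I^\bullet_{1,1} \cdot \partial_\tau$ (which follows from $I^\bullet_{1,1} = D^\bullet(I_{0,1}/I_{0,0}) = D^\bullet \tau^{\bullet,\lambda}$ by the recursive definition), combined with the QDE $z\partial_\tau S^*(\Phi) = S^*(\Phi_1 \star_\tau^{\bullet} \Phi)$, gives
\[
(\mathbf{M}^\bullet)^{m+1}(I^{\bullet,\lambda}/z) = I^\bullet_{1,1} \cdot S^{\bullet,\lambda}(x,z)^*(\Phi_1 \star_\tau^{\bullet} \Phi_m).
\]

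To close the induction, the identity $\Phi_1 \star_\tau^{\bullet} \Phi_m = (I^\bullet_{m+1,m+1}/I^\bullet_{1,1})\,\Phi_{m+1}$ must be established in the small twisted quantum product. This will follow by matching leading $z$-coefficients of the two expressions for $(\mathbf{M}^\bullet)^{m+1}(I^{\bullet,\lambda}/z)$: the direct expansion $(\mathbf{M}^\bullet)^{m+1}(I^{\bullet,\lambda}/z) = \sum_{q\geq 0} I^\bullet_{m+1,m+1+q}\, z^{-q} \Phi_{m+1+q}$ has leading term $I^\bullet_{m+1,m+1}\Phi_{m+1}$ with no contributions at $\Phi_k$ for $k \leq m$, while the QDE formula yields $I^\bullet_{1,1}(\Phi_1 \star_\tau^{\bullet} \Phi_m) + \mathcal{O}(z^{-1})$. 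Since the columns $S^*(\Phi_k)$ are linearly independent with distinct leading state-space indices, matching forces the claimed identity. The main obstacle is this ``ascending-basis'' structure of the twisted quantum product, which ultimately reflects the cyclic form of the Picard-Fuchs operators \eqref{eq:cypf} and \eqref{eq:lgpf} (both contain $(\lambda/z)^5$ as the only obstruction to purely diagonal multiplication); an alternative, cleaner path is to characterize both sides of the proposition as the unique formal $z^{-1}$-series solutions to the twisted Dubrovin equations with prescribed initial term $\Phi_m$, and obtain uniqueness directly from the PF equation for $I^{\bullet,\lambda}$ without explicitly computing the quantum structure constants.
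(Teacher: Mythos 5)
Your proof is correct and is essentially the argument the paper intends: the paper's proof is a one-line appeal to ``standard properties of Givental's Lagrangian cone'' (citing Lemma 7.4 of the authors' earlier work \cite{GR}), and your induction --- base case from the twisted mirror theorem, inductive step from the quantum differential equation $z\partial_\tau S^{\bullet,\lambda}(x,z)^*(\Phi)=S^{\bullet,\lambda}(x,z)^*(\Phi_1\star_\tau^\bullet\Phi)$ together with $D^\bullet=I^\bullet_{1,1}\partial_\tau$ --- is precisely the unpacked form of that cone argument. Your extraction of $\Phi_1\star_\tau^\bullet\Phi_m=(I^\bullet_{m+1,m+1}/I^\bullet_{1,1})\Phi_{m+1}$ by matching $z^0$-coefficients is sound, since the mod-$5$ grading of the twisted $I$-functions guarantees the coefficient of $z^{1-N}$ is proportional to $\Phi_N$ alone, so the expansion $(\mathbf M^\bullet)^p(I^{\bullet,\lambda}/z)=\sum_{q\ge0}I^\bullet_{p,p+q}z^{-q}\Phi_{p+q}$ holds as claimed.
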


\begin{proof}
This follows from standard properties of Givental's Lagrangian cone. See, for example, Lemma 7.4 in \cite{GR} for the proof in the LG setting.
\end{proof}

We have the following important properties of $I^{\bullet}_{p,p}$, which were proved in \cite{ZZ} for the CY case and in \cite{GR} in the LG case.

\begin{proposition}[\cite{ZZ} Theorem 2, \cite{GR} Lemma 7.6]
Define 
\[
L^\bullet=\begin{cases}
\left(1-\fq 5^5\right)^{-1/5} & \bullet=CY,\\
\left(1-(\ft/5)^5\right)^{-1/5} & \bullet=LG.
\end{cases}
\]
Then the following properties hold:
\begin{enumerate}
\item $I^{\bullet}_{0,0}\cdots I^{\bullet}_{4,4}=(L^\bullet)^5$;
\item $I^{\bullet}_{5+p,5+p}=\lambda^5I^{\bullet}_{p,p}$;
\item for $0\leq p\leq 4$, $I^\bullet_{p,p}=I^\bullet_{4-p,4-p}$.
\end{enumerate}
\end{proposition}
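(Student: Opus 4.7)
The plan is to derive all three identities from the Picard-Fuchs equations \eqref{eq:cypf} and \eqref{eq:lgpf}, together with the inductive definition \eqref{eq:idecomp} of $I^\bullet_{p,q}$ and the expression for the twisted $S$-operator given in Proposition \ref{prop:si}. The common thread is that each application of $\mathbf{M}^\bullet$ is $zD^\bullet$ composed with division by the previous diagonal factor $I^\bullet_{p-1,p-1}$, so that five iterations of $\mathbf{M}^\bullet$ morally correspond to $z^5(D^\bullet)^5$ divided by $\prod_{p=0}^{4} I^\bullet_{p,p}$. This composition is in turn controlled by the PF operator, whose leading symbol is $(D^\bullet)^5$ and whose distinguished $(\lambda/z)^5$ term is the source of the factor $\lambda^5$ in Property~(2).

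For Property~(2), I would compare two descriptions of $(\mathbf{M}^\bullet)^5(I^{\bullet,\lambda}/z)$. On the one hand, it equals $\sum_{q\geq 0} I^\bullet_{5,5+q}z^{-q}\Phi_{5+q}$ by construction. On the other hand, the PF equation rewrites $(D^\bullet)^5 \cdot I^{\bullet,\lambda}$ in terms of $\pm(\lambda/z)^5 \cdot I^{\bullet,\lambda}$ plus a correction that carries strictly different $z$-orders and that drops into the shifted piece of the extended state-space (where $\Phi_{5+q}=\lambda^5\Phi_q$). Matching the leading-in-$z$ coefficient in the $\Phi_5$-slot isolates $I^\bullet_{5,5}=\lambda^5 I^\bullet_{0,0}$, and $I^\bullet_{5+p,5+p}=\lambda^5 I^\bullet_{p,p}$ then follows by running the same argument after $p$ preliminary applications of $\mathbf{M}^\bullet$. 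Property~(3) I would obtain from the symplecticity $S^{\bullet,\lambda}(x,z)S^{\bullet,\lambda}(x,-z)^*=\mathrm{id}$: combining this with Proposition \ref{prop:si} and pairing $S^{\bullet,\lambda}(x,z)^*\Phi_p$ against $S^{\bullet,\lambda}(x,-z)^*\Phi_{4-p}$ under the Poincar\'e pairing yields, after comparing leading coefficients in $z$, the identity $I^\bullet_{p,p}=I^\bullet_{4-p,4-p}$.

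For Property~(1), I would compute $D^\bullet\log\prod_{p=0}^4 I^\bullet_{p,p}$ and exploit the telescoping structure of \eqref{eq:idecomp}, which rewrites each $D^\bullet\log I^\bullet_{p,p}$ in terms of ratios $I^\bullet_{p-1,p}/I^\bullet_{p-1,p-1}$. The PF equation closes the resulting recursion after five steps, and the geometric series produced by the $\fq\prod(5D^\bullet+i)$ (resp.\ $\ft^{-5}\prod(D^\bullet-i)$) correction assembles into $(1-5^5\fq)^{-1}$ (resp.\ $(1-(\ft/5)^5)^{-1}$). Integrating and matching the initial condition at $\fq=0$ (or $\ft=0$), where each $I^\bullet_{p,p}$ evaluates to $1$, produces the stated identity $\prod_{p=0}^4 I^\bullet_{p,p}=(L^\bullet)^5$.

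The main obstacle will be the bookkeeping in Property~(1): while the telescoping idea is structurally clean, tracking how the correction terms in the PF operator interact with the five divisions in $(\mathbf{M}^\bullet)^5$ to produce the closed form $(L^\bullet)^5$ requires identifying precisely which generating function is being encoded. In particular, one must confirm that the ``error'' terms generated at each step of the recursion land strictly in the off-diagonal entries $I^\bullet_{p,q}$ with $q>p$ rather than polluting the diagonal product, and this delicate cancellation is ultimately what makes the CY and LG identities take the same shape under $\fq^{-1}=\ft^5$.
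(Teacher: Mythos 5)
First, a point of context: the paper does not prove this proposition at all --- it imports it wholesale from \cite{ZZ} (Theorem 2) and \cite{GR} (Lemma 7.6) --- so there is no in-paper argument to match; your proposal has to stand on its own. The ingredients you name (the Picard--Fuchs operators, the Birkhoff-factorization structure of $(\mathbf M^\bullet)^p$, unitarity of $S^{\bullet,\lambda}$) are indeed the right ones, but two of your three arguments have gaps that are more than bookkeeping. For Property (3), ``pairing $S^{\bullet,\lambda}(x,z)^*\Phi_p$ against $S^{\bullet,\lambda}(x,-z)^*\Phi_{4-p}$ and comparing leading coefficients in $z$'' does not produce $I^\bullet_{p,p}=I^\bullet_{4-p,4-p}$: since $S^{\bullet,\lambda}(x,z)^*\Phi_m=\Phi_m+\cO(z^{-1})$ and the extended pairing satisfies $(\Phi_a,\Phi_b)^{\bullet,\lambda}=0$ unless $a+b\equiv 3\pmod 5$, the $z^0$ term of that pairing is the tautology $0=0$, and the first nontrivial coefficient (at $z^{-4}$) is a relation among \emph{off-diagonal} entries $I^\bullet_{p,p+q}$. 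The unitarity route can be made to work, but only by first using the quantum differential equation in the form $zD^\bullet S^{\bullet,\lambda}(x,z)^*\Phi_m=I^\bullet_{m+1,m+1}\,S^{\bullet,\lambda}(x,z)^*\Phi_{m+1}$ (which follows from Proposition \ref{prop:si} and the definition of $\mathbf M^\bullet$), so that the symmetry becomes the self-adjointness of quantum multiplication by the divisor class paired against $\Phi_m$ and $\Phi_{2-m}$; that extra step is the actual content and is missing from your sketch.

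For Property (1), the Abel/Wronskian computation you describe does not directly compute $D^\bullet\log\prod_{p=0}^4 I^\bullet_{p,p}$. Writing the PF operator in the factored form $c\cdot D^\bullet\frac{1}{I^\bullet_{4,4}}D^\bullet\cdots D^\bullet\frac{1}{I^\bullet_{0,0}}$ (a factorization which itself needs justification, since $(\mathbf M^\bullet)^5$ is not $(D^\bullet)^5$ and, in the LG case, the operator \eqref{eq:lgpf} is written in $\ft\frac{d}{d\ft}$ while $D^{LG}=\frac{d}{d\ft}$) and comparing the subleading coefficient against the normalized PF operator yields an ODE for the \emph{weighted} product $I^\bullet_{0,0}(I^\bullet_{1,1})^2(I^\bullet_{2,2})^3(I^\bullet_{3,3})^4(I^\bullet_{4,4})^5$, not for $\prod_p I^\bullet_{p,p}$; one converts the former into $(\prod_p I^\bullet_{p,p})^3$ only by invoking the symmetry (3). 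So your logical order must be (3) before (1), and the ``geometric series'' you mention is really just the leading coefficient $5^5\fq-1$ of \eqref{eq:cypf} entering through Abel's identity. Your outline for Property (2) is essentially sound. As written, though, the proposal is a plausible roadmap rather than a proof; to be safe one should either supply the missing steps above or simply defer to \cite{ZZ} and \cite{GR} as the paper does.
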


Following the arguments of \cite{GR}, we see that the quantum product \eqref{eq:quantumproduct} is semi-simple. In particular, define
\[
E_\alpha =\left\{\begin{array}{lr}\epsilon_\alpha & \bullet=CY\\ e_\alpha & \bullet=LG  \end{array}\right\} :=\frac{1}{5}\sum_i  \tilde \Phi_i \xi^{-i \alpha} ,\quad  \alpha = 0,1,2,3,4,
\]
where
\begin{align*}
\tilde \Phi_0= \Phi_0,\quad \tilde \Phi_1 = \frac{g^{-\frac{2}{5}} f^{-\frac{1}{5}}}{\lambda} \cdot \Phi_1,\quad &
\tilde \Phi_2 = \frac{g^{-\frac{4}{5}} f^{\frac{3}{5}}}{\lambda^{2}} \cdot \Phi_2,\\
\tilde \Phi_3 = \frac{g^{-\frac{6}{5}} f^{\frac{2}{5}}}{\lambda^{3}} \cdot  \Phi_3,\quad &
\tilde \Phi_4 = \frac{g^{-\frac{3}{5}} f^{\frac{1}{5}}}{\lambda^{4}} \cdot  \Phi_4.
\end{align*}
with $f:=I^\bullet_{2,2}/I^\bullet_{1,1}$ and $g=I^\bullet_{0,0}/I^\bullet_{1,1}$. Then
\[
E_\alpha\star_\tau^\bullet E_\beta=\delta_{\alpha,\beta}E_\alpha.
\]

Let $\{u^{\bullet,\alpha}\}$ be canonical coordinates, determined up to a constant by
\[
\sum_\alpha  E_\alpha d u^{\bullet,\alpha} =\Phi_1d\tau^\bullet.
\]

The next result computes the canonical coordinates explicitly in terms of a global one-form.

\begin{proposition}\label{prop:du}
We have
\[
d u^{\bullet,\alpha} = \xi^\alpha\lambda \cdot du,
\]
where $du$ is the global one-form
\[
du = L^{CY} \frac{d\fq}{\fq}=L^{LG}d\ft.
\]
\end{proposition}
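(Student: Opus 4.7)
The strategy is to expand $\Phi_1\,d\tau^\bullet$ in the idempotent basis $\{E_\alpha\}$ and read off the coefficients, then use the $I$-function identities to collapse them to the stated form.

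First, I would invert the discrete Fourier transform relating $\{\tilde\Phi_i\}$ and $\{E_\alpha\}$. Since $E_\alpha=\frac{1}{5}\sum_i \tilde\Phi_i\,\xi^{-i\alpha}$, Fourier duality gives $\tilde\Phi_i=\sum_\alpha E_\alpha\,\xi^{i\alpha}$, and the rescaling $\Phi_1=\lambda\, g^{2/5}f^{1/5}\,\tilde\Phi_1$ yields $\Phi_1=\lambda\, g^{2/5}f^{1/5}\sum_\alpha \xi^\alpha E_\alpha$. Matching this expansion with the defining identity $\sum_\alpha E_\alpha\,du^{\bullet,\alpha}=\Phi_1\,d\tau^\bullet$ along the idempotent basis gives
\[
du^{\bullet,\alpha}=\xi^\alpha\,\lambda\, g^{2/5}f^{1/5}\,d\tau^\bullet.
\]

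Second, I would simplify the scalar factor $g^{2/5}f^{1/5}$. Using $g=I^{\bullet}_{0,0}/I^{\bullet}_{1,1}$ and $f=I^{\bullet}_{2,2}/I^{\bullet}_{1,1}$,
\[
g^{2/5}f^{1/5}=\frac{\bigl((I^{\bullet}_{0,0})^2 I^{\bullet}_{2,2}\bigr)^{1/5}}{(I^{\bullet}_{1,1})^{3/5}}.
\]
The palindrome symmetry $I^{\bullet}_{p,p}=I^{\bullet}_{4-p,4-p}$ reduces the product identity $(L^\bullet)^5=\prod_{p=0}^4 I^{\bullet}_{p,p}$ to the relation $(L^\bullet)^5=(I^{\bullet}_{0,0})^2(I^{\bullet}_{1,1})^2 I^{\bullet}_{2,2}$; substituting this back collapses the display to $g^{2/5}f^{1/5}=L^\bullet/I^{\bullet}_{1,1}$.

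Finally, by the inductive definition $\tau^\bullet=I^{\bullet}_{0,1}/I^{\bullet}_{0,0}$ and $I^{\bullet}_{1,1}=D^\bullet\tau^\bullet$, so $d\tau^\bullet=I^{\bullet}_{1,1}\cdot dy$ where $dy=d\fq/\fq$ in the CY case and $dy=d\ft$ in the LG case. Assembling the pieces yields
\[
du^{\bullet,\alpha}=\xi^\alpha\lambda\cdot\frac{L^\bullet}{I^{\bullet}_{1,1}}\cdot I^{\bullet}_{1,1}\,dy=\xi^\alpha\lambda\cdot L^\bullet\,dy=\xi^\alpha\lambda\cdot du,
\]
as claimed. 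The only genuine obstacle is the algebraic collapse in the second step: the fractional powers $g^{2/5}f^{1/5}$ arising from the rescalings in the idempotent basis are not manifestly simple, and the cancellation requires combining both the product identity $\prod_p I^{\bullet}_{p,p}=(L^\bullet)^5$ and the symmetry $I^{\bullet}_{p,p}=I^{\bullet}_{4-p,4-p}$ in concert to telescope the expression down to $L^\bullet/I^{\bullet}_{1,1}$.
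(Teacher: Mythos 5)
Your argument is correct and, modulo being written out in full, is exactly the computation the paper defers to \cite{GR} (Lemma 7.8 there treats the LG case, and the CY case is identical): invert the discrete Fourier transform to expand $\Phi_1$ in the idempotent basis, then use $\prod_p I^\bullet_{p,p}=(L^\bullet)^5$ together with the palindrome symmetry $I^\bullet_{p,p}=I^\bullet_{4-p,4-p}$ to collapse $g^{2/5}f^{1/5}$ to $L^\bullet/I^\bullet_{1,1}$, which cancels against $d\tau^\bullet=I^\bullet_{1,1}\,dy$. No gaps.
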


\begin{proof}
The LG case is proved in Lemma 7.8 in \cite{GR}, and the CY case follows from the same arguments.
\end{proof}

We fix the constants of integration by declaring
\[
u^{CY,\alpha}=\xi^\alpha\lambda\log(\fq)+\cO(q)
\]
and
\[
u^{LG,\alpha}=\cO(\ft).
\]

The normalized canonical coordinates are defined by
\[
\tilde E_\alpha:=(\Delta_\alpha^\bullet)^{1/2} E_\alpha
\]
where
\[
\Delta_\alpha^\bullet=\frac{1}{\eta(E_\alpha,E_\alpha)^{\bullet,\lambda}}.
\]

We compute the pairing  on the canonical coordinates explicitly.

\begin{proposition}\label{lem:delta}
We have
\[
\Delta_\alpha^\bullet = (\xi^\alpha \lambda)^3 \frac{ (I^\bullet_{0,0})^2}{(L^\bullet)^2}
\]
\end{proposition}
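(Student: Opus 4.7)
The plan is to evaluate $\eta(E_\alpha,E_\alpha)^{\bullet,\lambda}$ by expanding $E_\alpha = \tfrac{1}{5}\sum_i \tilde\Phi_i\xi^{-i\alpha}$ in the rescaled basis, carrying out the resulting Fourier-type sum over $i$, and inverting. The argument parallels (and unifies) the LG-only computation in Section~7 of \cite{GR}.

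The main technical input is the twisted Poincar\'e pairing $(\Phi_a,\Phi_b)^{\bullet,\lambda}$ on the extended state space $\overline H^\bullet$. For $\bullet = CY$ this reduces to a degree-zero equivariant 3-point integral on $\bP^4$ with twisting class $e_{(\bC^*)^5}(\cO(5))$. I would localize at the five torus-fixed points, specialize $\lambda_i = \xi^i\lambda$, and apply the identity $\prod_{j\neq i}(\xi^i - \xi^j) = 5\xi^{4i}$ to obtain
\[
(\varphi_a,\varphi_b)^{CY,\lambda} = 5\delta_{a+b,3} + 5\lambda^5\,\delta_{a,4}\delta_{b,4}.
\]
The analogous equivariant localization on $\M_{0,3}^{1/5}$ for twisted $5$-spin invariants, a slight extension of the computation already carried out in \cite{GR}, should give the same formula on $\overline H^{LG}$. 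Since $\Phi_0$ is the Frobenius unit, this pairing is $\tau$-independent and coincides with $\eta(\cdot,\cdot)^{\bullet,\lambda}$.

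Writing $\tilde\Phi_i = c_i\Phi_i$, a direct inspection of the exponents of $g$, $f$, and $\lambda$ in the definition of the $\tilde\Phi_i$ shows that the scaling factors are engineered precisely so that
\[
c_0c_3 \;=\; c_1c_2 \;=\; \lambda^5 c_4^2 \;=\; \frac{g^{-6/5}f^{2/5}}{\lambda^3}.
\]
Hence $\eta(\tilde\Phi_i,\tilde\Phi_j)^{\bullet,\lambda}$ takes the common value $5g^{-6/5}f^{2/5}/\lambda^3$ for every pair with $i+j\in\{3,8\}$ and vanishes otherwise. Substituting into $E_\alpha$ and observing that each of the five ordered pairs $(0,3),(1,2),(2,1),(3,0),(4,4)$ contributes $\xi^{-3\alpha}$ (using $\xi^{-8\alpha}=\xi^{-3\alpha}$), the sum collapses to
\[
\eta(E_\alpha,E_\alpha)^{\bullet,\lambda} \;=\; \frac{g^{-6/5}f^{2/5}\,\xi^{-3\alpha}}{\lambda^3}.
\]
To finish, I would invert and combine the product identity $(L^\bullet)^5 = \prod_{p=0}^4 I^\bullet_{p,p}$ with the symmetry $I^\bullet_{p,p} = I^\bullet_{4-p,4-p}$, which give $(L^\bullet)^2 = (I^\bullet_{0,0})^{4/5}(I^\bullet_{1,1})^{4/5}(I^\bullet_{2,2})^{2/5}$ and hence $g^{6/5}f^{-2/5} = (I^\bullet_{0,0})^2/(L^\bullet)^2$, which is exactly the claimed formula.

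The hard part will be the first step: although the final answer for the pairing is uniform in $\bullet$, the CY and LG sides require genuinely different equivariant set-ups, and the $\lambda^5$ contribution from the top class $\Phi_4$ must appear with precisely the exponent needed for $\lambda^5c_4^2$ to line up with $c_0c_3$ and $c_1c_2$; everything downstream is essentially bookkeeping.
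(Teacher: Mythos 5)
Your proposal is correct: the twisted pairing $(\Phi_a,\Phi_b)^{\bullet,\lambda}=5\delta_{a+b,3}+5\lambda^5\delta_{a,4}\delta_{b,4}$, the identities $c_0c_3=c_1c_2=\lambda^5c_4^2=g^{-6/5}f^{2/5}\lambda^{-3}$, and the final reduction via $(L^\bullet)^5=\prod_p I^\bullet_{p,p}$ and $I^\bullet_{p,p}=I^\bullet_{4-p,4-p}$ all check out, giving $\eta(E_\alpha,E_\alpha)=g^{-6/5}f^{2/5}(\xi^\alpha\lambda)^{-3}=(L^\bullet)^2/\bigl((\xi^\alpha\lambda)^3(I^\bullet_{0,0})^2\bigr)$ as required. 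This is essentially the same computation the paper delegates to Lemma 7.9 of \cite{GR} (with the CY case "following from the same arguments"), so you have simply written out in a uniform way what the paper cites.
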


\begin{proof}
The LG case is proved in Lemma 7.9 in \cite{GR}, and the CY case follows from the same arguments.
\end{proof}

The change of basis matrix between flat and normalized canonical coordinates is denoted by
\[
\Psi^\bullet_{\alpha m}:=(\widetilde E_\alpha,\Phi_m)^{\bullet,\lambda}.
\] 

From the above definitions, we can compute the change of basis explicitly.

\begin{proposition}\label{prop:cob}
We have 
\[
\Psi^{\bullet}_{\alpha m}=\xi^{\alpha (m-3/2)} c_{3-m}^{\bullet},
\]
where $c_i^\bullet$ satisfy
\begin{align*}
& c_{-1}^\bullet := \lambda^{\frac{5}{2}},  \quad c_0^\bullet := \lambda^{\frac{3}{2}}\frac{I_{0,0}^{\bullet}}{L^\bullet} , \quad c_1^\bullet := \lambda^{\frac{1}{2}}\frac{I_{0,0}^{\bullet}I_{1,1}^{\bullet}}{(L^\bullet)^2}\\
& c_2^\bullet := (c_1^\bullet)^{-1}=\lambda^{-\frac{1}{2}}\frac{I_{0,0}^{\bullet}I_{1,1}^{\bullet}I_{2,2}^{\bullet}}{(L^\bullet)^3}, \quad c_3^\bullet := (c_0^\bullet)^{-1}=\lambda^{-\frac{3}{2}}\frac{I_{0,0}^{\bullet}I_{1,1}^{\bullet}I_{2,2}^{\bullet}I_{3,3}^{\bullet}}{(L^\bullet)^4}
\end{align*}
\end{proposition}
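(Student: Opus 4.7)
The plan is to compute $\Psi^\bullet_{\alpha m}$ directly from the definitions, reducing everything to a linear algebra exercise on the extended state space $\overline{H}^\bullet$. Expanding $\widetilde E_\alpha = (\Delta_\alpha^\bullet)^{1/2} E_\alpha$ using the definition of $E_\alpha$ gives
\[
\Psi^\bullet_{\alpha m} = (\Delta_\alpha^\bullet)^{1/2} \cdot \frac{1}{5} \sum_{i=0}^{4} \xi^{-i\alpha} (\widetilde \Phi_i, \Phi_m)^{\bullet,\lambda},
\]
so the proof reduces to identifying the Frobenius pairing in the basis $\{\Phi_i\}$ and then simplifying.

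The crucial first input is the explicit form of the pairing. A standard string-equation argument applied to the insertion of $\Phi_0$ in the definition of $(\Phi_i,\Phi_m)^{\bullet,\lambda}$ reduces it to a classical degree-zero intersection number. In the CY setting, equivariant localization on $\bP^4$ with the specialization $\lambda_i = \xi^i\lambda$ gives $\int_{\bP^4}^T \varphi_1^k = \lambda^{k-4}$ exactly when $k \equiv 4 \pmod 5$ and zero otherwise; combined with the twist class $e_T(R\pi_*\L^{\otimes 5}) = 5\varphi_1$ on degree-zero constant maps, this yields
\[
(\Phi_i, \Phi_m)^{\bullet,\lambda} = 5\delta_{i+m,3} + 5\lambda^5 \delta_{i+m,8}.
\]
The LG analogue is parallel, since the orbifold $5$-spin computation in the narrow sector gives the same residue pattern. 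The upshot is that for each $m$ exactly one index $i = (3-m)\bmod 5$ contributes to the sum, collapsing it to a single term.

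Next I would substitute the rescaling factors $\widetilde \Phi_i = r_i \Phi_i$ recorded in the definition of $E_\alpha$, together with Proposition \ref{lem:delta}, which supplies $(\Delta_\alpha^\bullet)^{1/2} = \xi^{3\alpha/2}\lambda^{3/2} I^\bullet_{0,0}/L^\bullet$. Doing this for $m = 0,1,2,3,4$ yields a closed-form expression for each $\Psi^\bullet_{\alpha m}$ in terms of $\xi^\alpha$, fractional powers of $\lambda$, the functions $f = I^\bullet_{2,2}/I^\bullet_{1,1}$, $g = I^\bullet_{0,0}/I^\bullet_{1,1}$, and $L^\bullet$. The final step is to repackage these into the compact form $\xi^{\alpha(m-3/2)} c^\bullet_{3-m}$. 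This is purely algebraic: the identities $I^\bullet_{p,p} = I^\bullet_{4-p,4-p}$ and $\prod_{p=0}^{4} I^\bullet_{p,p} = (L^\bullet)^5$ from the previous proposition together imply $I^\bullet_{2,2} = (L^\bullet)^5/\bigl((I^\bullet_{0,0})^2 (I^\bullet_{1,1})^2\bigr)$, and substituting this into the fractional exponents clears them and produces exactly the stated formulas for each $c^\bullet_i$.

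The main obstacle is really just careful bookkeeping with half-integer powers of $\xi$ and $\lambda$. In particular, the $m=4$ case requires identifying $\xi^{5\alpha/2}$ with $\xi^{-5\alpha/2}$ via $\xi^5 = 1$, and the factor $\lambda^5$ arising from the $\delta_{i+m,8}$ term must be absorbed into $c^\bullet_{-1} = \lambda^{5/2}$. Since Propositions \ref{prop:du} and \ref{lem:delta} proceed by invoking the LG argument of \cite{GR} and asserting that the CY case follows by the same reasoning, I expect the same is true here: once the twisted CY Frobenius pairing has been pinned down, the LG computation transports essentially verbatim to the CY side.
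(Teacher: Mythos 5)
Your proof is correct and takes essentially the same route the paper intends: the paper gives no argument beyond ``from the above definitions, we can compute,'' and your direct expansion of $(\widetilde E_\alpha,\Phi_m)^{\bullet,\lambda}$ using the classical twisted pairing $5\delta_{i+m,3}+5\lambda^5\delta_{i+m,8}$, Proposition \ref{lem:delta}, and the identities $I^\bullet_{p,p}=I^\bullet_{4-p,4-p}$ and $\prod_p I^\bullet_{p,p}=(L^\bullet)^5$ is exactly that computation. The bookkeeping checks out in all five cases, including $m=4$, where $\xi^{5\alpha}=1$ and the $\lambda^5$ from the pairing combine with $\tilde\Phi_4$ to give $c_{-1}^\bullet=\lambda^{5/2}$.
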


For convenience, we also define $c_{4}^\bullet:=\lambda^{-\frac{5}{2}}$, so that $c_m^\bullet=(c_{3-m}^\bullet)^{-1}$ for $m=0,\dots,4$. The inverse matrix of $\Psi^\bullet$ is given by
\[
(\Psi^\bullet)^{-1}_{m \alpha}   =  \frac{\xi^{\alpha(3/2-m)}}{5} c_m^\bullet\\
\]

Since the quantum product is semi-simple for both types of twisted invariants, there is a canonical $R$-matrix that yields the higher-genus twisted invariants via Teleman's reconstruction theorem \cite{Teleman}. The diagonal entries of the linear term of the $R$-matrix can be computed explicitly.

\begin{proposition}
Define
\[
{(R_1^{CY})_{\alpha\alpha}}  =  \frac{1}{5} \frac{d}{du^\alpha} \left(\frac{5}{4}\log(L^{CY})-4 \log(I_0^{CY})-\log( \I_{1,1}^{CY}) -\frac{3}{4}\log(\fq)\right)
\]
and
\[
{(R_1^{LG})_{\alpha\alpha}}  =  \frac{1}{5} \frac{d}{du^\alpha} \left(\frac{5}{4}\log(L^{LG})-4 \log(I_0^{LG})-\log( \I_{1,1}^{LG}) \right).
\]
Then, up to constant terms, these matrices are equal to the linear terms of the canonical $R$-matrices associated to the respective semi-simple Frobenius manifolds.
\end{proposition}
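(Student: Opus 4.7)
The linear term of the canonical $R$-matrix of a semi-simple Frobenius manifold is characterized by the asymptotic factorization
\[
\Psi^\bullet\, S^{\bullet,\lambda}(x,z)^*\, (\Psi^\bullet)^{-1} \sim R^\bullet(x,z)\, e^{U/z}
\]
in the normalized canonical frame, where $U=\mathrm{diag}(u^{\bullet,\alpha})$ and $R^\bullet(x,z)=1+R_1^\bullet z+O(z^2)$. The plan is to compute the left-hand side directly from the explicit formulas for $S^{\bullet,\lambda}$ (Proposition \ref{prop:si}) and $\Psi^\bullet$ (Proposition \ref{prop:cob}), and to extract the diagonal of $R_1^\bullet$ by matching asymptotic expansions at $z=\infty$.

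First, I would rewrite $S^{\bullet,\lambda}(x,z)^*(\Phi_m)=(\mathbf{M}^\bullet)^m(I^{\bullet,\lambda}(x,z)/z)/I^\bullet_{m,m}(x)$ and expand it as a Laurent series in $z^{-1}$. Viewed along canonical directions, each entry contributes an exponential $e^{u^{\bullet,\alpha}/z}$ multiplied by a formal series in $z^{-1}$ whose coefficients are explicit polynomials in the $I^\bullet_{p,q}$. Conjugating by $\Psi^\bullet$ using the explicit change-of-basis formula of Proposition \ref{prop:cob} and dividing out by $e^{U/z}$ then isolates the series $R^\bullet(x,z)$.

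Second, I would read off the $z^1$-coefficient of the diagonal entries of $R^\bullet$. The result expresses $(R_1^\bullet)_{\alpha\alpha}$ as a logarithmic derivative with respect to $u^{\bullet,\alpha}$ of a product of normalization factors $c_m^\bullet$ and diagonal coefficients $I^\bullet_{p,p}$. Using the product identity $\prod_{p=0}^4 I^\bullet_{p,p}=(L^\bullet)^5$, the symmetry $I^\bullet_{p,p}=I^\bullet_{4-p,4-p}$, the expressions for $c_m^\bullet$, and the one-form formula $du^{\bullet,\alpha}=\xi^\alpha\lambda\,du$ from Proposition \ref{prop:du}, this logarithm collapses into $\tfrac{5}{4}\log L^\bullet - 4\log I_{0,0}^\bullet - \log I_{1,1}^\bullet$. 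In the CY case, the $\xi^\alpha\lambda\log\fq$ piece of $u^{CY,\alpha}$ produces the extra $-\tfrac{3}{4}\log\fq$ term; in the LG case the canonical coordinate is analytic at $\ft=0$, so no such correction appears.

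The main obstacle will be the careful bookkeeping needed to identify the subleading asymptotic term. Both $\Psi^\bullet$ and $e^{U/z}$ contribute nontrivially at order $z^1$, and one must verify that the remaining contributions either exponentiate into $e^{U/z}$ or land in off-diagonal or higher-order pieces of $R^\bullet$. Since the LG analogue was established in the authors' prior work \cite{GR}, the concrete task is to mimic that argument in the CY setting: substituting $\fq\tfrac{d}{d\fq}$ for $\tfrac{d}{d\ft}$, tracking the logarithmic integration constant in $u^{CY,\alpha}$, and invoking the identities for $I^{CY}_{p,p}$ proved in \cite{ZZ}. The statement is only claimed ``up to constant terms,'' which absorbs any ambiguity from the choice of integration constants in the canonical coordinates and from the overall normalization of $R^\bullet$.
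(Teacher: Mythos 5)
Your supporting ingredients are the right ones --- the explicit $\Psi^\bullet$ of Proposition \ref{prop:cob}, the one-form $du^{\bullet,\alpha}=\xi^\alpha\lambda\,du$ of Proposition \ref{prop:du}, the $I^\bullet_{p,p}$ identities, and the role of the logarithmic integration constant in $u^{CY,\alpha}$ in producing the $-\tfrac34\log\fq$ term --- but the mechanism you propose for extracting $R_1^\bullet$ fails. In the factorization $\Psi^\bullet S^{\bullet,\lambda}(x,z)^*(\Psi^\bullet)^{-1}\sim R^\bullet(x,z)\,e^{U/z}$, the left-hand side is a power series in $z^{-1}$ while $R^\bullet(x,z)=1+R_1^\bullet z+\cdots$ is a (generally divergent) series in positive powers of $z$; the identity is an asymptotic statement at $z=0$, not an identity of Laurent coefficients at $z=\infty$. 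Expanding $R^\bullet(z)e^{U/z}$ at $z=\infty$, the coefficient of $z^{1}$ is $R_1^\bullet+R_2^\bullet U+\tfrac12R_3^\bullet U^2+\cdots$, so ``matching at $z=\infty$'' gives a single relation involving all of the $R_k^\bullet$ and cannot isolate $R_1^\bullet$; likewise, a series in $z^{-1}$ does not canonically factor as $e^{u/z}$ times another series in $z^{-1}$ with coefficients polynomial in the $I^\bullet_{p,q}$ --- that would require the full stationary-phase asymptotics of the hypergeometric $I$-functions as $z\to0$, which you have not set up. (A secondary point: the $R$-matrix produced by factoring the $S$-matrix is the $\underline R^\bullet$ of Proposition \ref{prop:factorizations}, which agrees with the canonical one only up to diagonal constants; this is harmless for a claim ``up to constant terms,'' but note that the paper devotes an entire subsection to pinning down exactly that discrepancy.)

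The argument the paper actually runs (following Proposition 7.10 of \cite{GR}) never touches the $S$-matrix: the canonical $R$-matrix is characterized by the flatness equation in normalized canonical coordinates, which determines the off-diagonal part of $R_1^\bullet$ from $\Psi^\bullet d(\Psi^\bullet)^{-1}$ and then the \emph{differential} of the diagonal entries,
\[
(dR_1^{CY})_{\alpha\alpha}=\frac{1}{5\lambda\xi^\alpha}\left(\left(\frac{d\log c_2^{CY}}{du}\right)^2+\left(\frac{d\log c_3^{CY}}{du}\right)^2\right)du.
\]
The substantive step is then showing that this quadratic expression in the rotation coefficients is an exact differential of the stated logarithm; that is where Lemma 3 of \cite{ZZ} enters (in the CY case), and it does not follow from the product identity $\prod_pI^\bullet_{p,p}=(L^\bullet)^5$ and the symmetry $I^\bullet_{p,p}=I^\bullet_{4-p,4-p}$ alone. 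You cite the right references but place the nontrivial identity at the wrong step, and the step where you would actually ``read off the $z^1$-coefficient'' is the one that does not go through.
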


\begin{proof}
In the LG case, this is Proposition 7.10 in \cite{GR}. In the CY case, the proof in \cite{GR} can be mimicked up to the point where
\[
(dR_1^{CY})_{\alpha\alpha}=\frac{1}{5\lambda \xi^\alpha} \left(\left( \frac{d \log  c_2^{CY}}{du}\right)^{2} + \left( \frac{d \log c_3^{CY}}{du}\right)^{2} \right) du.
\]
Letting $(-)'$ denote $\fq\frac{d}{d\fq}$, we then apply Lemma 3 of \cite{ZZ} at the second equality below to rewrite the right-hand side as
\begin{align*}
\left( \frac{d \log  c_2^{CY}}{du}\right)^{2} + \left( \frac{d \log c_3^{CY}}{du}\right)^{2}&=\frac{1}{(L^{CY})^2}\left(\frac{(c_2^{CY})'}{c_2^{CY}}+\frac{(c_3^{CY})'}{c_3^{CY}}\right)\\
&\hspace{0cm}=\frac{1}{L^{CY}}\left(-\frac{3}{4}(L^{CY})^4-\frac{1}{L^{CY}}\left(-5\frac{(L^{CY})'}{L^{CY}}+4\frac{(I_0^{CY})'}{I_0^{CY}}+\frac{(I_1^{CY})'}{I_1^{CY}} \right) \right)'\\
&\hspace{0cm}=\frac{d}{du}\left(\frac{1}{L^{CY}}\left(\frac{1}{4}(L^{CY})^5-1-4\frac{(I_0^{CY})'}{I_0^{CY}}-\frac{(I_1^{CY})'}{I_1^{CY}} \right)\right)\\
&\hspace{0cm}=\frac{d^2}{du^2}\left(\frac{5}{4}\frac{(L^{CY})'}{L^{CY}}-4\frac{(I_0^{CY})'}{I_0^{CY}}-\frac{(I_1^{CY})'}{I_1^{CY}}-\frac{3}{4}\log(\fq) \right).
\end{align*}
In the last equality, we have used the fact that $\frac{(L^{CY})'}{L^{CY}}=\frac{1}{5}\left((L^{CY})^5-1\right)$.
\end{proof}

Notice that the genus-one formulas can be obtained from the above $R$-matrices by the following formulas:
\begin{equation}\label{eq:dfcy}
dF_1^{CY}(\tau^{CY})=-\frac{200}{24}d\log(q^{1/5}I_0^{CY}(\fq))-\frac{5}{24}d\log(\fq^{1/5}L^{CY}(\fq))+\frac{1}{2}\sum_\alpha (R_1^{CY})_{\alpha\alpha}du^{\alpha}
\end{equation}
and
\begin{equation}\label{eq:dflg}
dF_1^{LG}(\tau^{LG})=-\frac{200}{24}d\log(I_0^{LG}(\ft))-\frac{5}{24}d\log(L^{LG}(\ft))+\frac{1}{2}\sum_\alpha (R_1^{LG})_{\alpha\alpha}du^{\alpha}
\end{equation}

\subsection{The Twisted Genus-Zero Correspondence}

We can extend Theorem \ref{thm:CR} to the twisted setting.

\begin{theorem}\label{thm:equivlgcy}
Define the linear transformation $\U^\lambda(-z):\overline \cH^{LG}\rightarrow\overline\cH^{CY}$ by
\[
\U^\lambda(-z)(\phi_m)=\frac{\xi^{m+1}}{\re^{-2\pi\ri\varphi_1/z}-\xi^{m+1}}\frac{-2\pi\ri(-z)^m}{\Gamma(1+5\varphi_1/z)}\prod_{i=0}^4\frac{\Gamma(1+\varphi_1/z-\xi^i\lambda/z)}{\Gamma(1-(m+1)/5-\xi^i\lambda/z)},
\]
where $\varphi_1^a:=\lambda^{5\left\lfloor\frac{a}{5}\right\rfloor}\varphi_a$. Then $\U^\lambda(-z)$ is a symplectic transformation and, upon identifying $\fq^{-1}=\ft^5$, there exists an analytic continuation of $I^{CY,\lambda}(\fq,z)$ such that
\[
\U^\lambda(-z)(\ft I^{LG,\lambda}(\ft,z))=5\widetilde I^{CY,\lambda}(\ft,z).
\]
\end{theorem}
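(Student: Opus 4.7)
The strategy is to generalize Chiodo--Ruan's Mellin--Barnes argument \cite{CR} to the equivariant setting. The structural reason that such a symplectomorphism must exist is already implicit in the text: the Picard--Fuchs operators \eqref{eq:cypf} and \eqref{eq:lgpf} coincide under $\fq^{-1}=\ft^5$, so $\ft I^{LG,\lambda}$ and $\widetilde I^{CY,\lambda}$ are two bases of solutions to the same system of ODEs and hence must be related by a constant change-of-basis matrix. The content of the theorem is the explicit form of this change of basis together with its symplecticity.

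First I would verify that $\U^\lambda(z)$ is symplectic. Writing $\U^\lambda(-z)$ in the eigenbasis of multiplication by $\varphi_1$, the condition $\U^\lambda(z)(\U^\lambda(-z))^* = 1$ reduces to a finite family of scalar identities, which follow from the reflection formula $\Gamma(x)\Gamma(1-x)=\pi/\sin(\pi x)$, the cyclotomic factorization $\prod_{i=0}^4(1-\xi^i w) = 1-w^5$, and an elementary manipulation of the prefactors $\xi^{m+1}/(\re^{-2\pi\ri\varphi_1/z}-\xi^{m+1})$.

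The core of the argument is a Mellin--Barnes integral representation of $I^{CY,\lambda}$. Set
\[
\Phi(s) := \frac{-\pi}{\sin(\pi s)}\cdot\frac{\Gamma(1+5\varphi_1/z + 5s)}{\Gamma(1+5\varphi_1/z)\prod_{i=0}^4\Gamma(1+\varphi_1/z + s - \xi^i\lambda/z)}\prod_{i=0}^4 \Gamma(1-\xi^i\lambda/z).
\]
Then $I^{CY,\lambda}(\fq,z) = \frac{z\varphi_0}{2\pi\ri}\int_C \Phi(s)\fq^{\varphi_1/z+s}\,ds$ for a Hankel-type contour $C$ enclosing $\Z_{\geq 0}$, since residues at $s\in\Z_{\geq 0}$ reproduce the defining series. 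Deforming $C$ to enclose instead the poles of $\Gamma(1+5\varphi_1/z+5s)$, located at $s = -(a+1)/5 - \varphi_1/z$ for $a\geq 0$, and substituting $\fq^{-1}=\ft^5$, one obtains an expansion valid near $\ft=0$. Grouping the resulting residues by the congruence class of $a+1$ modulo $5$ and applying the geometric series identity
\[
\frac{\xi^{m+1}}{\re^{-2\pi\ri\varphi_1/z} - \xi^{m+1}} = -\sum_{\substack{a\geq 0 \\ \langle(a+1)/5\rangle = \langle(m+1)/5\rangle}} \re^{2\pi\ri a\varphi_1/z}\,\xi^{(m+1)(a+1)}
\]
collapses the four arithmetic progressions of residues into the four prefactors of $\U^\lambda(-z)(\phi_m)$, $m=0,1,2,3$, and matches the full expansion term-by-term with $5\,\U^\lambda(-z)(\ft I^{LG,\lambda}(\ft,z))$, with the factor of $5$ arising from the Jacobian of the change of variable $s\mapsto -(a+1)/5-\varphi_1/z$ in the residue computation.

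The principal obstacle is the bookkeeping of the five equivariant Gamma factors. The denominator product $\prod_i \Gamma(1+\varphi_1/z+s-\xi^i\lambda/z)$ in $\Phi(s)$ must split correctly between the factors $1/\Gamma(1-(m+1)/5-\xi^i\lambda/z)$ appearing in $\U^\lambda(-z)$ and the explicit product $\prod_{0<k<(a+1)/5,\,\langle k\rangle=\langle(a+1)/5\rangle}((kz)^5+\lambda^5)$ appearing in $I^{LG,\lambda}$. Extracting the latter from the former uses $\Gamma(x+n)/\Gamma(x) = \prod_{k=0}^{n-1}(x+k)$ together with the factorization $(kz)^5 + \lambda^5 = \prod_{i=0}^4(kz + \xi^i\lambda)$ (up to an overall sign), and requires careful tracking of the integer shift induced by writing $a = 5\lfloor a/5\rfloor + r$. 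A useful consistency check is the non-equivariant limit $\lambda\to 0$, which must reproduce Theorem \ref{thm:CR} exactly.
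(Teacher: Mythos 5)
Your proposal follows essentially the same route as the paper, whose entire proof is the observation that the Mellin--Barnes method of Chiodo--Ruan generalizes to the twisted setting; your sketch correctly fills in the contour deformation, the regrouping of residues by congruence class, and the matching of the equivariant Gamma factors. One small slip: the prefactor in your kernel $\Phi(s)$ should be $\prod_{i=0}^4\Gamma(1+\varphi_1/z-\xi^i\lambda/z)$ rather than $\prod_{i=0}^4\Gamma(1-\xi^i\lambda/z)$, since only the former makes the residues at $s=d\in\Z_{\geq 0}$ reproduce the coefficients of $I^{CY,\lambda}(\fq,z)$ and survives into the numerator of $\U^\lambda(-z)(\phi_m)$ after the contour is moved.
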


\begin{proof}
The Mellin-Barnes method employed in \cite{CR} to prove Theorem \ref{thm:CR} (Corollary 4.2.4 in their paper) easily generalizes.
\end{proof}

Using Proposition \ref{prop:si} and Theorem \ref{thm:equivlgcy}, we can study the action of the symplectic transformation $\U^\lambda$ on the $S$-operators.

\begin{proposition}\label{prop:Mmatrix}
We have
\begin{equation}\label{eq:Mmatrixdef}
\U^\lambda(-z)S^{LG,\lambda}(\tau^{LG},z)^*= \widetilde S^{CY,\lambda}(\tau^C,z)^*M(\ft,z)
\end{equation}
where $M(\ft,z)$ has only non-negative powers of $z$. Moreover, $M(\ft,0)$ is diagonal with
\begin{equation}\label{eq:Mmatrixlinear}
M(\ft,0)=\left(\delta_{m,m'}(-1)^m\frac{5^{m+1}}{\ft^{m+1}}\frac{\widetilde I^{CY}_{0,0}\cdots\widetilde I^{CY}_{m,m}}{I^{LG}_{0,0}\cdots I^{LG}_{m,m}}\right)_{mm'}=-(\widetilde\Psi^{CY})^{-1}\Psi^{LG}
\end{equation}
and
\begin{equation}\label{eq:Mmatrix}
M(0,z)=R^\lambda(-z)\U^\lambda_0.
\end{equation}
\end{proposition}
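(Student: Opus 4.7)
The plan is to apply $\U^\lambda(-z)$ to the explicit formula for $S^{LG,\lambda}(\tau^{LG},z)^*$ from Proposition \ref{prop:si} and then inductively re-express the result in the basis $\{\widetilde S^{CY,\lambda}(\tau^C,z)^*(\varphi_{m'})\}$ on the CY side. The three key inputs are: (i) Theorem \ref{thm:equivlgcy}, which supplies the base case $\U^\lambda(-z)(I^{LG,\lambda}/z) = 5\widetilde I^{CY,\lambda}/(z\ft)$; (ii) the observation that the matrix entries of $\U^\lambda(-z)$ are independent of $\ft$, so $\U^\lambda(-z)$ commutes with $\frac{d}{d\ft}$ and with multiplication by any function of $\ft$; and (iii) the identification of differential operators $\fq\frac{d}{d\fq} = -\frac{\ft}{5}\frac{d}{d\ft}$ forced by the mirror map $\fq^{-1}=\ft^5$.

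With these in hand, set $F_m^{LG} := (\mathbf M^{LG})^m(I^{LG,\lambda}/z)$ and $\widetilde F_m^{CY} := (\mathbf M^{CY})^m(\widetilde I^{CY,\lambda}/z)$, so that Proposition \ref{prop:si} reads $S^{LG,\lambda}(\tau^{LG},z)^*(\phi_m) = F_m^{LG}/I_{m,m}^{LG}$ and $\widetilde S^{CY,\lambda}(\tau^C,z)^*(\varphi_m) = \widetilde F_m^{CY}/\widetilde I_{m,m}^{CY}$. I would then prove by induction on $m$ that
\[
\U^\lambda(-z)\,F_m^{LG} \;=\; \sum_{m'\leq m} a_{m',m}(\ft,z)\,\widetilde F_{m'}^{CY},
\]
with $a_{m',m}(\ft,z)$ a polynomial in $z$ and $a_{m',m}(\ft,0)=0$ for $m'<m$. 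The inductive step applies $\mathbf M^{LG}$: since $\U^\lambda(-z)$ commutes with $z\frac{d}{d\ft}$ and since $\mathbf M^{LG} = z\frac{d}{d\ft}(\cdot)$ converts to $-\frac{5z}{\ft}\fq\frac{d}{d\fq}(\cdot)$ via the chain rule, each application of $\mathbf M^{LG}$ produces a top-order contribution involving the next iterate $\widetilde F_{m+1}^{CY}$ (with a prefactor $-5/\ft$ that absorbs the $z$ in $\mathbf M^{LG}$) together with several product-rule corrections that each retain an explicit factor of $z$. Dividing by $I_{m,m}^{LG}$ and writing $\widetilde F_{m'}^{CY} = \widetilde I_{m',m'}^{CY}\,\widetilde S^{CY,\lambda}(\tau^C,z)^*(\varphi_{m'})$ then produces $M(\ft,z)$ with only non-negative powers of $z$, together with the explicit diagonal formula at $z=0$ claimed in \eqref{eq:Mmatrixlinear}. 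The identification with $-(\widetilde\Psi^{CY})^{-1}\Psi^{LG}$ is a direct calculation: orthogonality of the characters $\xi^{\alpha(m-m')}$ forces $(\widetilde\Psi^{CY})^{-1}\Psi^{LG}$ to be diagonal, and the explicit formulas for $c_m^\bullet$ from Proposition \ref{prop:cob}, together with the relations $I^\bullet_{0,0}\cdots I^\bullet_{4,4} = (L^\bullet)^5$ and $I^\bullet_{p,p} = I^\bullet_{4-p,4-p}$ and the analytically continued identity $L^{CY}/L^{LG}=-\ft/5$, reduce the comparison to a short algebraic manipulation.

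For the limit formula \eqref{eq:Mmatrix}, I specialize \eqref{eq:Mmatrixdef} to $\ft = 0$. Since $\tau^{LG}(0)=0$, we have $S^{LG,\lambda}(0,z)^* = 1$, and the identity reduces to $\U^\lambda(-z) = \widetilde S^{CY,\lambda}(\tau^C|_{\ft=0},z)^*\,M(0,z)$, which is itself a Birkhoff factorization of $\U^\lambda(-z)$: the left factor has the form $1+O(z^{-1})$ while $M(0,z)$ has only non-negative powers of $z$ by construction. Uniqueness of Birkhoff factorization then identifies $M(0,z)$ with $R^\lambda(-z)\U_0^\lambda$ from the equivariant analogue of the construction in Section \ref{sec:quantization}. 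The main obstacle throughout is the induction above: each step mixes the top-order $\widetilde F_{m+1}^{CY}$ term with the previously appearing $\widetilde F_{m'}^{CY}$ through the product rule, and careful bookkeeping of the powers of $z$, $\ft$, and the various ratios of $I$-functions is needed both to confirm that the off-diagonal entries really vanish at $z=0$ and to arrive at the closed-form diagonal. The analytic continuation of $L^{CY}$ (which supplies the crucial sign factor $L^{CY}/L^{LG}=-\ft/5$) and the $\ft\to 0$ behavior of $\widetilde I^{CY}_{m,m}/\ft$ are lower-complexity but necessary final details.
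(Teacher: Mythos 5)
Your proposal is correct and follows essentially the same route as the paper: the paper also runs the triangular induction with the Birkhoff operators $\mathbf M^\bullet$ starting from the twisted $I$-function correspondence of Theorem \ref{thm:equivlgcy} and the identification $\fq\frac{d}{d\fq}=-\frac{\ft}{5}\frac{d}{d\ft}$ (it just writes the CY $S$-operator in terms of the LG one rather than the reverse), and then reads off \eqref{eq:Mmatrixlinear} from Proposition \ref{prop:cob}. For \eqref{eq:Mmatrix} you appeal to uniqueness of Birkhoff factorization where the paper directly verifies $S^\lambda(-z)\widetilde S^{CY,\lambda}(\tau^C,z)^*\big|_{\ft=0}=1$ from the $\ft\to 0$ expansion of $\widetilde I^{CY,\lambda}$; these are equivalent, and both hinge on the same regularity of the analytically continued data at $\ft=0$ that you flag as a remaining detail.
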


\begin{proof}
The statement \eqref{eq:Mmatrixdef} follows from Theorem \ref{thm:equivlgcy} and general properties of Lagrangian cones. However, in order to prove \eqref{eq:Mmatrixlinear}, we first provide a more constructive proof of \eqref{eq:Mmatrixdef}. From Proposition \ref{prop:si} and Theorem \ref{thm:equivlgcy}, we compute
\begin{align*}
\widetilde S^{CY}(\tau^C,z)^*(\varphi_0)&=\frac{\widetilde I^{CY,\lambda}(\ft,z)}{\widetilde I^{CY}_{0,0}}=\frac{\ft}{5}\frac{I^{LG}_{0,0}}{\widetilde I^{CY}_{0,0}}\U^\lambda(-z)S^{LG,\lambda}(\tau^{LG},z)^*(\phi_0),\\
\widetilde S^{CY,\lambda}(\tau^C,z)^*(\varphi_1)&=\frac{z}{\widetilde I^{CY}_{1,1}}\left(-\frac{\ft}{5}\frac{d}{d\ft}\right)\widetilde S^{CY,\lambda}(\tau^C,z)^*(\varphi_0)\\
&=\cO(z)\U^\lambda(-z) S^{LG,\lambda}(\tau^{LG},z)^*(\phi_0)\\
&\hspace{.2cm}-\frac{\ft^2}{5^2}\frac{I^{LG}_{0,0}I^{LG}_{1,1}}{\widetilde I^{CY}_{0,0}\widetilde I^{CY}_{1,1}}\U^\lambda(-z)S^{LG,\lambda}(\tau^{LG},z)^*(\phi_1),\\
&\vdots\\
\widetilde S^{CY,\lambda}(\tau^C,z)^*(\varphi_4)&=\cO(z)\sum_{m=0}^3\U^\lambda(-z)\ft^{m+1}S^{LG,\lambda}(\tau^{LG},z)^*(\phi_m)\\
&\hspace{.2cm}+\frac{\ft^5}{5^5}\frac{I^{LG}_{0,0}I^{LG}_{1,1}I^{LG}_{2,2}I^{LG}_{3,3}I^{LG}_{4,4}}{\widetilde I^{CY}_{0,0}\widetilde I^{CY}_{1,1} \widetilde I^{CY}_{2,2} \widetilde I^{CY}_{3,3} \widetilde I^{CY}_{4,4}}\U^\lambda(-z)S^{LG,\lambda}(\tau^{LG},z)^*(\phi_4).
\end{align*}
The explicit formula for $M(\ft,0)$ in \eqref{eq:Mmatrixlinear} then follows from this computation and Proposition \ref{prop:cob}.

To prove \eqref{eq:Mmatrix}, multiply both sides of \eqref{eq:Mmatrixdef} by $S^\lambda(-z)$ to obtain
\[
R^\lambda(-z)\U_0^\lambda S^{LG,\lambda}(\tau^{LG},z)^*=S^\lambda(-z)\widetilde S^{CY,\lambda}(\tau^C,z)^*M(\ft,z).
\]
Since $S^{LG,\lambda}(\tau^{LG},z)^*\big|_{\ft=0}=1$, it suffices to prove that
\begin{equation}\label{eq:Mmatrix2}
S^\lambda(-z)\widetilde S^{CY,\lambda}(\tau^C,z)^*\big|_{\ft=0}=1.
\end{equation}
By Proposition \ref{prop:si} and the definition of the operator $\mathbf M^{CY}$, we compute, using the fact that $S^\lambda(-z)=1+\cO(z^{-1})$, that
\begin{align}\label{eq:SS}
\nonumber S^\lambda(-z)\widetilde S^{CY,\lambda}(\tau^C,z)^*(\varphi_k)&=S^\lambda(-z)\frac{\left(\mathbf M^{CY}\right)^k\left(\widetilde I^{CY,\lambda}(\ft,z)/z\right)}{[z^0\varphi_k]\left(\mathbf M^{CY}\right)^k\left(\widetilde I^{CY,\lambda}(\ft,z)/z\right)}\\
&=\frac{\left(\mathbf M^{CY}\right)^k\left(S^\lambda(-z)\widetilde I^{CY,\lambda}(\ft,z)/z\right)}{[z^0\varphi_k]\left(\mathbf M^{CY}\right)^k\left(S^\lambda(-z)\widetilde I^{CY,\lambda}(\ft,z)/z\right)}
\end{align}
Since $\U_0^\lambda$ is diagonal and $R^\lambda(-z)=1+\cO(z)$, we see that 
\begin{align}\label{eq:SS2}
\nonumber S^\lambda(-z)\widetilde I^{CY,\lambda}(\ft,z)/z&=\frac{\ft}{\sqrt{5}}R^\lambda(-z)\U_0^\lambda I^{LG,\lambda}(\ft,z)/z\\
&=\frac{\ft}{\sqrt{5}}\sum_{i=0}^4\left(\frac{(\U^\lambda_0)_{ii}\varphi_i}{i!}t^i+\cO(t^{i+1})\right)z^{-i}+\cO(t^6z^{-5})
\end{align}
Reinserting \eqref{eq:SS2} into \eqref{eq:SS}, one verifes \eqref{eq:Mmatrix2} from the definition of $\mathbf M^{CY}$.

\end{proof}

\section{Loop-Type Graphs Revisited}\label{sec:loop2}

We now return to the task of computing the residue in Equation \eqref{eq:residue}. We begin by reinterpreting the residue in terms of a non-equivariant limit of twisted invariants.

\begin{lemma}\label{lem:looplimit}
The loop-type contributions can be expressed as a non-equivariant limit:
\begin{align}\label{eq:eqresidue}
dF_1^C(\tau^C)_L=
&\nonumber \lim_{\lambda\rightarrow 0}\frac{1}{2}\Res_{w=0 \atop z=0}\;\bigg(d\frac{\sum_m \U^\lambda(-w)S^{LG,\lambda}(\tau^{LG},w)^*(\phi_m)\otimes \U^\lambda(-z)S^{LG,\lambda}(\tau^{LG},z)^*(\phi^m)}{w+z}, \\
& \hspace{5cm}\frac{\sum_m \widetilde S^{CY,\lambda}(\tau^C,-w)^*(\varphi_m)\otimes (z)\widetilde S^{CY,\lambda}(\tau^C,-z)^*(\varphi^m)}{-w-z}\bigg)^{CY}.
\end{align}
\end{lemma}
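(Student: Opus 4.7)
The plan is to interpret the identity as a statement that each ingredient in the residue of Lemma \ref{lem:looptail2} is the non-equivariant limit of its twisted counterpart, and that the residue operation commutes with this limit. Since the residue at $w=z=0$ extracts a finite-order Laurent coefficient in $w$ and $z$, the interchange of limit and residue will be routine once we verify convergence of the coefficients.

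First I would establish convergence of each building block as $\lambda \to 0$:
\begin{enumerate}
\item $\lim_{\lambda\to 0} S^{LG,\lambda}(\tau^{LG},z)^{*}(\phi_m) = S^{LG}(\tau^{LG},z)^{*}(\phi_m)$ for $m = 0,1,2,3$.
\item $\lim_{\lambda\to 0} \widetilde{S}^{CY,\lambda}(\tau^{C},z)^{*}(\varphi_m) = \widetilde{S}^{CY}(\tau^{C},z)^{*}(\varphi_m)$ for $m = 0,1,2,3$.
\item $\lim_{\lambda\to 0} \U^{\lambda}(-z)(\phi_m) = \U(-z)(\phi_m)$ for $m = 0,1,2,3$.
\end{enumerate}
Facts (1) and (2) follow from the standard principle that twisted invariants with insertions in the narrow (resp.\ ambient) sector reduce to the corresponding untwisted invariants in the non-equivariant limit, combined with the fact that the relevant coefficients are polynomial in $\lambda^{5}$. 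Fact (3) is an immediate computation: setting $\lambda = 0$ in the formula of Theorem \ref{thm:equivlgcy} collapses the product $\prod_{i=0}^{4}\Gamma(1+\varphi_1/z-\xi^{i}\lambda/z)/\Gamma(1-(m+1)/5-\xi^{i}\lambda/z)$ to the single quotient $\Gamma^{5}(1+\varphi_1/z)/\Gamma^{5}(1-(m+1)/5)$ appearing in Theorem \ref{thm:CR}. The analytic continuation from Theorem \ref{thm:equivlgcy} is compatible with that of Theorem \ref{thm:CR}, since both are obtained by the Mellin-Barnes contour deformation.

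The second and more delicate step is to handle the extra basis direction $\Phi_4$ present in the twisted state-space. The twisted sums $\sum_{m}\phi_m\otimes\phi^{m}$ and $\sum_{m}\varphi_m\otimes\varphi^{m}$ range over $m = 0,\ldots,4$, while the untwisted sums in Lemma \ref{lem:looptail2} range only over $m=0,\ldots,3$. We must show that the $m=4$ contributions to the residue vanish in the non-equivariant limit. This is essentially a weight-counting argument: the dual basis elements $\phi^{4}$ and $\varphi^{4}$ under the twisted pairings carry negative powers of $\lambda$, which must be cancelled by positive powers of $\lambda$ appearing in the twisted correlators whenever a $\phi_{4}$ or $\varphi_{4}$ insertion appears (as seen from the $\lambda^{5\lfloor a/5\rfloor}$ factors in the twisted $I$-functions and the explicit formulas from Section \ref{sec:twisted}). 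Collecting these powers shows that the $m=4$ summand is of order $\lambda^{k}$ with $k>0$, hence vanishes in the limit.

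The main obstacle is exactly this $m=4$ vanishing, which requires a careful bookkeeping of equivariant weights in the dual basis and in the twisted $S$-operators. Once this vanishing is established, combining (1)--(3) with the fact that the residue extracts only finitely many Laurent coefficients that are polynomial (hence continuous) in $\lambda^{5}$ lets us move the $\lambda\to 0$ limit outside the residue, identifying the expression in Lemma \ref{lem:looptail2} with the non-equivariant limit in \eqref{eq:eqresidue}.
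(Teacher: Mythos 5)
Your proposal is correct and follows essentially the same route as the paper: the only substantive point in both is the vanishing of the $m=4$ summands in the non-equivariant limit, established by tracking $\lambda$-weights --- the $\lambda^{-5}$ in $\Phi^4=\frac{1}{5\lambda^5}\Phi_4$ is cancelled by the order-$5$ zeros of twisted correlators carrying $\Phi_4$ insertions. One detail to fold into your weight bookkeeping: the paper also uses that $\U^\lambda(-z)(\phi_4)$ itself has a zero of order $5$ at $\lambda=0$, which is what kills the non-correlator leading term $\phi_4$ of $S^{LG,\lambda}(\tau^{LG},z)^*(\phi_4)$.
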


\begin{proof}
Recall that
\[
S^{\bullet,\lambda}(\tau,z)^*(\Phi)=\sum_m\left\langle\left\langle \Phi\;\frac{\Phi_m}{z-\psi} \right\rangle \right\rangle_{0,2}^{\bullet,\lambda}\Phi^m.
\]
Since
\[
\Phi^4 = \frac{1}{5\lambda^5}\Phi_4
\]
some care is required in taking the non-equivariant limit. It is not difficult to check that the LG correlators have a zero of order $5\times(\text{number of $\phi_4$ insertions})$ at $\lambda=0$. Along with the fact that $\U^\lambda(z)(\phi_4)$ has a zero of order $5$ at $\lambda=0$, we see that 
\[
\lim_{\lambda\rightarrow 0} \sum_m \U^\lambda(-w)S^{LG,\lambda}(\tau^{LG},w)^*(\phi_m)\otimes \U^\lambda(-z)S^{LG,\lambda}(\tau^{LG},z)^*(\phi^m)
\]
always exists, and it vanishes whenever there is a $\phi_4$ or $\phi^4$ insertion in either of the correlators. Similarly, it is not hard to see that the CY correlators have a zero of order $5$ at $\lambda=0$ whenever there is a $\varphi_4$ insertion, and therefore
\[
\lim_{\lambda\rightarrow 0} \left(\left(\sum_m \widetilde S^{CY,\lambda}(\tau^C,-w)^*(\varphi_m)\otimes\widetilde S^{CY,\lambda}(\tau^C,-z)^*(\varphi^m)\right)[\varphi^i\otimes\varphi^j]\right)
\]
always exists, and it vanishes whenever there is a $\varphi_4$ or $\varphi^4$ insertion in the correlators. Therefore, the limit of the pairing exists and vanishes whenever there is a $\Phi_4$ or $\Phi^4$ insertion in any of the correlators.
\end{proof}

Now that we understand the residue in terms of twisted $S$-matrices, we can rewrite it in terms of $R$-matrices, where the residue will become easy to compute. To do this, let $p_\alpha$ denote the equivariant cohomology class of the $\alpha$th $(\bC^*)^5$-fixed point of $\bP^4$, and set
\[
\tilde p_\alpha:=\frac{p_\alpha}{\sqrt{(p_\alpha,p_\alpha)^{CY}}}.
\]
The following result allows us to rewrite $S$-matrices in terms of $R$-matrices.

\begin{proposition}\label{prop:factorizations}
As a linear map from the basis $\{p_\alpha\}$ to the basis $\{\varphi_i\}$, the matrix series of the twisted CY fundamental solution 
\[
S^{CY,\lambda}(\tau^{CY},z)(\tilde p_\alpha)=\tilde p_\alpha+\sum_i\left\langle\left\langle\varphi_i\;\frac{\tilde p_\alpha}{z-\psi} \right\rangle\right\rangle_{0,2}^{CY,\lambda}\varphi^i
\]
factors canonically as
\[
(\Psi^{CY})^{-1}\underline R^{CY}(\fq,z)e^{U^{CY}/z}
\]
where $U^{CY}:=\diag(u^{CY,1},\dots,u^{CY,5})$ and $\underline R^{CY}(\fq,z)$ is an $R$-matrix of the Frobenius manifold associated to twisted GW invariants. In addition, the matrix series of the twisted $5$-spin fundamental solution
\[
S^{LG,\lambda}(\tau^{LG},z)\U^\lambda(-z)^*(\tilde p_\alpha)=\U^\lambda(-z)^*(\tilde p_\alpha)+\sum_i\left\langle\left\langle\phi_i\;\frac{\U^\lambda(-z)^*(\tilde p_\alpha)}{z-\psi} \right\rangle\right\rangle_{0,2}^{LG,\lambda}\phi^i
\]
factors canonically as
\[
-(\Psi^{LG})^{-1}\underline R^{LG}(\ft,z)e^{U^{CY}/z}
\]
where $\underline R^{LG}(\ft,z)$ is an $R$-matrix of the Frobenius manifold associated to the twisted $5$-spin invariants.

\end{proposition}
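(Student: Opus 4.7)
The plan is to deduce both factorizations from Teleman's classification of semisimple cohomological field theories applied to the twisted genus-zero theories of Section \ref{sec:twisted}, matching the two factorizations via the equivariant genus-zero LG/CY correspondence (Theorem \ref{thm:equivlgcy}).

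For the CY side, the twisted GW theory is a semisimple Frobenius manifold on $\overline H^{CY}\otimes\bC[\lambda^{\pm 5}]$. By standard results on the equivariant quantum cohomology of $\bP^4$, extended to the twisted setting, the normalized torus-fixed-point classes $\tilde p_\alpha$ diagonalize the twisted classical product at $\fq=0$ and deform to the normalized canonical idempotents $\widetilde E_\alpha^{CY}$ of the full quantum product after the shift by $\tau^{CY}\varphi_1$; this identification is encoded precisely in the change-of-basis matrix $\Psi^{CY}$ of Proposition \ref{prop:cob}. Teleman's reconstruction theorem then produces a canonical $R$-matrix $\underline R^{CY}(\fq,z)=1+O(z)$ solving the twisted quantum differential equation with
\[
\Psi^{CY}\, S^{CY,\lambda}(\tau^{CY},z)(\tilde p_\alpha)=\underline R^{CY}(\fq,z)\,e^{U^{CY}/z},
\]
where $U^{CY}=\diag(u^{CY,\alpha})$ and the asymptotic form $u^{CY,\alpha}=\xi^\alpha\lambda\log\fq+O(\fq)$ reflects the monodromy of the classical idempotent near $\fq=0$. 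Inverting $\Psi^{CY}$ yields the first claimed factorization.

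For the LG side, the twisted $5$-spin theory is likewise a semisimple Frobenius manifold, and Teleman's theorem gives an analogous factorization of $S^{LG,\lambda}(\tau^{LG},z)$ through normalized idempotents $\widetilde E_\alpha^{LG}$, canonical coordinates $u^{LG,\alpha}$, and an $R$-matrix $\underline R^{LG}(\ft,z)$. By Theorem \ref{thm:equivlgcy}, $\U^\lambda(-z)$ is a symplectic identification of the two Lagrangian cones, and Proposition \ref{prop:Mmatrix} refines this to the $S$-matrix-level statement
\[
\U^\lambda(-z)S^{LG,\lambda}(\tau^{LG},z)^*=\widetilde S^{CY,\lambda}(\tau^C,z)^*M(\ft,z),
\]
with $M(\ft,0)=-(\widetilde\Psi^{CY})^{-1}\Psi^{LG}$. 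Proposition \ref{prop:du} guarantees $du^{CY,\alpha}=du^{LG,\alpha}=\xi^\alpha\lambda\,du$, so the canonical coordinates match once the constants of integration are chosen compatibly, and the identification yields $e^{U^{LG}/z}=e^{U^{CY}/z}$. Substituting the LG Teleman factorization into Proposition \ref{prop:Mmatrix} and translating back through $\U^\lambda(-z)^*$ to the $\tilde p_\alpha$ basis yields the stated LG factorization, with the overall minus sign inherited from $M(\ft,0)$.

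The main obstacle lies not in Teleman's theorem itself but in carefully matching the canonical structures across $\U^\lambda$. On the CY side the identification of $\tilde p_\alpha$ with the canonical idempotents is standard localization, but on the LG side one must use Proposition \ref{prop:Mmatrix} to trace $\tilde p_\alpha$ through $\U^\lambda(-z)^*$ to the correct LG normalized basis, verify that the asymptotic behavior of the canonical coordinates is consistent (so that they literally agree, not merely up to additive constants), and confirm that the $R$-matrix appearing in the factorization is genuinely the canonical $R$-matrix predicted by Teleman rather than a gauge-equivalent variant. The interplay of signs between Propositions \ref{prop:Mmatrix} and \ref{prop:cob}, encapsulated in $M(\ft,0)=-(\widetilde\Psi^{CY})^{-1}\Psi^{LG}$, is what ultimately produces the leading minus sign in the LG factorization.
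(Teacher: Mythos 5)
Your proposal is essentially correct and rests on the same two pillars as the paper: the Givental localization/materialization factorization of $S^{CY,\lambda}$ through the fixed-point basis, and the transfer to the LG side via Proposition \ref{prop:Mmatrix}. The main difference is in how the LG factorization is organized. You posit an independent Teleman-type factorization of $S^{LG,\lambda}$ through LG canonical idempotents and then ``match'' it to the CY one across $\U^\lambda$; the paper avoids this entirely by simply \emph{defining}
\[
\underline R^{LG}(\ft,z):=-(\Psi^{LG})M(\ft,z)^*(\widetilde \Psi^{CY})^{-1}\widetilde{\underline R}^{CY}(\ft,z)
\]
and then verifying the three axioms of an $R$-calibration: that $U^{CY}$ consists of canonical coordinates for the $5$-spin Frobenius manifold (Proposition \ref{prop:du}), that $\underline R^{LG}=1+\cO(z)$ (from $M(\ft,0)=-(\widetilde\Psi^{CY})^{-1}\Psi^{LG}$ and $\Psi^\bullet(\Psi^\bullet)^*=1$), and that $\underline R^{LG}(\ft,z)\underline R^{LG}(\ft,-z)^*=1$ (from $M(\ft,z)M(\ft,-z)^*=1$). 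This is cleaner because the nontrivial content is precisely that the $z$-dependent basis change $\U^\lambda(-z)^*$ composed with $S^{LG,\lambda}$ lands in the required normal form, and that is exactly what the explicit definition plus the three checks establishes; your route would still owe a verification that the two a priori different factorizations agree. Finally, your listed obstacle of confirming that the $R$-matrix is ``genuinely the canonical $R$-matrix predicted by Teleman'' is a non-issue for this proposition: the statement only asserts that $\underline R^\bullet$ is \emph{an} $R$-matrix, and the paper's remark immediately after the proposition explicitly warns that $\underline R^\bullet$ need not equal the canonical $R^\bullet$ --- the discrepancy (an odd diagonal gauge ambiguity) is dealt with separately in the ``Comparing Constants'' subsection. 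Up to these organizational points and the verification of the symplectic condition, your argument contains all the needed ingredients.
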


\begin{remark}
The reader should note that it does \emph{not} follow from Proposition \ref{prop:factorizations} that $\underline R^\bullet = R^\bullet$. However, since they are both $R$-matrices of the same semi-simple Frobenius manifold, they differ, at most, by right multiplication by a matrix of the form $\diag\left(\sum_{k\geq 0} a_{2i+1}z^{2k+1}\right)$. 
\end{remark}

\begin{proof}
The factorization of $S^{CY,\lambda}(\tau^{CY},z)$ was proved by Givental, \cite{GiventalSemisimple}, using materialization. A detailed proof can be found in Chapter 7 of \cite{LP}. To prove the factorization of $S^{LG,\lambda}(\tau^{LG},z)\U^\lambda(-z)^*$, we apply proposition \ref{prop:Mmatrix} to see that 
\begin{align*}
S^{LG,\lambda}(\tau^{LG},z)\U^\lambda(-z)^*(\tilde p_\alpha)&=M(\ft,z)^*\widetilde S^{CY,\lambda}(\tau^{CY},z)(\tilde p_\alpha)\\
&=M(\ft,z)^*(\widetilde \Psi^{CY})^{-1}\widetilde{\underline R}^{CY}(\ft,z)e^{U^{CY}/z}(\tilde p_\alpha)\\
&=-(\Psi^{LG})^{-1}\underline R^{CY}(\ft,z)e^{U^{LG}/z}(\tilde p_\alpha),
\end{align*}
where
\[
\underline R^{LG}(\ft,z):=-(\Psi^{LG})M(\ft,z)^*(\widetilde \Psi^{CY})^{-1}\widetilde{\underline R}^{CY}(\ft,z).
\]
To verify that $\underline R^{LG}(\ft,z)$ is an $R$-calibration of the Frobenius manifold, the following properties must be checked:
\begin{enumerate}
\item $U^{CY}$ is a diagonal matrix of canonical coordinates for the twisted $5$-spin Frobenius manifold,
\item $\underline R^{LG}(\ft,z)=1+\cO(z)$, and
\item $\underline R^{LG}(\ft,z)\underline R^{LG}(\ft,-z)^*=1$.
\end{enumerate}
The first property follows from Proposition \ref{prop:du}. The second follows from the fact that $\underline R^{CY}(\fq,z)=1+\cO(z)$, along with the second part of Proposition \ref{prop:Mmatrix} and the observation that $(\Psi^\bullet)(\Psi^\bullet)^*=1$. The third property follows from the observation that $(\Psi^\bullet)(\Psi^\bullet)^*=1$ and $M(\ft,z)M(\ft,-z)^*=1$.
\end{proof}

We now compute the residue.

\begin{proposition}\label{prop:residue}
We have
\[
dF_1^C(\tau^C)_L=\lim_{\lambda\rightarrow 0}\frac{1}{2}\sum_\alpha\left(\widetilde{\underline R}_1^{CY}(\ft)^*_{\alpha\alpha}-\underline R_1^{LG}(\ft)^*_{\alpha\alpha}  \right)du^\alpha,
\]
where $\underline R^\bullet_1(\ft)_{\alpha\alpha}$ denotes the diagonal entries of the linear-in-$z$ part of $\underline R^\bullet$.
\end{proposition}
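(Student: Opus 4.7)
The plan is to substitute the canonical factorizations of Proposition~\ref{prop:factorizations} into the residue formula of Lemma~\ref{lem:looplimit}, diagonalize the resulting tensors in the common canonical coordinates, and then extract the iterated residue using standard techniques from the Givental--Teleman formalism.

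First, I would apply Proposition~\ref{prop:factorizations} to rewrite both sides of the pairing. Using the Casimir identity $\sum_m \Phi_m \otimes \Phi^m = \sum_\alpha \tilde p_\alpha \otimes \tilde p_\alpha$ (valid because $\{\tilde p_\alpha\}$ is orthonormal for the CY pairing) and the fact that the orthogonal $\Psi$-factors contract to identities (via $\Psi^\bullet (\Psi^\bullet)^* = 1$), the two tensors in Lemma~\ref{lem:looplimit} diagonalize in the canonical index:
\begin{align*}
\sum_m \widetilde S^{CY,\lambda}(-w)^*(\varphi_m) \otimes \widetilde S^{CY,\lambda}(-z)^*(\varphi^m) &= \sum_\alpha e^{-u^\alpha(\tfrac{1}{w}+\tfrac{1}{z})} \widetilde{\underline R}^{CY}(-w)^*(\tilde p_\alpha) \otimes \widetilde{\underline R}^{CY}(-z)^*(\tilde p_\alpha),\\
\sum_m \U^\lambda(-w)S^{LG,\lambda}(w)^*(\phi_m) \otimes \U^\lambda(-z)S^{LG,\lambda}(z)^*(\phi^m) &= \sum_\alpha e^{u^\alpha(\tfrac{1}{w}+\tfrac{1}{z})} \underline R^{LG}(w)^*(\tilde p_\alpha) \otimes \underline R^{LG}(z)^*(\tilde p_\alpha).
\end{align*}
Here I use the key fact that the two twisted Frobenius manifolds share the same canonical coordinates $U = U^{CY}$ (Proposition~\ref{prop:du} and Proposition~\ref{prop:factorizations}), so the exponential factors pair naturally.

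Second, I would take the pairing $(-,-)^{CY}$ to rewrite the integrand as
\[
\frac{-1}{(w+z)^2}\sum_{\alpha,\beta} e^{(u^\alpha-u^\beta)(\tfrac{1}{w}+\tfrac{1}{z})} C_{\alpha\beta}(w)\,C_{\alpha\beta}(z),
\]
where $C_{\alpha\beta}(w) := (\underline R^{LG}(w)\,\widetilde{\underline R}^{CY}(-w)^*)_{\alpha\beta}$. From the expansions $\underline R^{LG}(w) = 1 + \underline R_1^{LG} w + O(w^2)$ and $\widetilde{\underline R}^{CY}(-w)^* = 1 - \widetilde{\underline R}_1^{CY*} w + O(w^2)$, we have $C_{\alpha\beta}(0) = \delta_{\alpha\beta}$ and $C_{\alpha\alpha}(w) = 1 + w\cdot(\underline R_1^{LG*}_{\alpha\alpha} - \widetilde{\underline R}_1^{CY*}_{\alpha\alpha}) + O(w^2)$.

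Third, I would apply the exterior derivative $d$ in $\tau$ and compute the iterated residue $\Res_{w=0}\Res_{z=0}$. The $d$-operation produces three types of terms: those from differentiating $C(w)$, from differentiating $C(z)$, and from differentiating the exponential factor (which contributes $(du^\alpha - du^\beta)(\tfrac{1}{w}+\tfrac{1}{z})$). These residues are evaluated using formal identities such as $\Res_{w=0}\Res_{z=0} e^{a(\tfrac{1}{w}+\tfrac{1}{z})}/(w+z) = a$ and $\Res_{w=0}\Res_{z=0} wz\cdot e^{a(\tfrac{1}{w}+\tfrac{1}{z})}/(w+z)^2 = a^2/2$. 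The off-diagonal ($\alpha\neq\beta$) contributions are initially nontrivial but collapse to the claimed diagonal form by invoking the flatness equations satisfied by the canonical $R$-matrices of a semi-simple Frobenius manifold (part of Teleman's reconstruction), which link off-diagonal $\underline R_{\alpha\beta}$ to canonical coordinate differences $u^\alpha - u^\beta$. After this structural simplification, only the diagonal contribution $\tfrac{1}{2}\sum_\alpha(\widetilde{\underline R}_1^{CY*}_{\alpha\alpha} - \underline R_1^{LG*}_{\alpha\alpha})\,du^\alpha$ remains, and taking the non-equivariant limit $\lambda\to 0$ completes the proof.

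The main obstacle is the third step: verifying that the off-diagonal cross-term residue contributions---which a priori involve products of off-diagonal $R$-matrix entries and powers of $(u^\alpha - u^\beta)$---assemble into the claimed diagonal 1-form. This is the most delicate piece and requires careful bookkeeping using the Frobenius manifold flatness equations together with the specific structural properties ($R(0)=1$ and $R(z)R(-z)^* = 1$) of the canonical $R$-matrices.
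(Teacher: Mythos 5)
Your overall strategy coincides with the paper's: substitute the factorizations of Proposition \ref{prop:factorizations} into the residue of Lemma \ref{lem:looplimit} and exploit the fact that both twisted theories share the canonical coordinates $U^{CY}$. However, there is a genuine error in your first step, and it manufactures exactly the obstacle you flag in your third step. The factorization in Proposition \ref{prop:factorizations} is $S=\Psi^{-1}\underline{R}(z)e^{U/z}$ with $e^{U/z}$ on the \emph{right}, so the adjoint is $S(z)^{*}=e^{U/z}\underline{R}(z)^{*}\Psi$ and the exponential acts \emph{after} $\underline{R}(z)^{*}$, on a vector that is not a $U$-eigenvector. The correct diagonalization of the Casimir tensor is therefore $\sum_{\alpha}\bigl(e^{-U/w}\widetilde{\underline{R}}^{CY}(-w)^{*}\tilde p_\alpha\bigr)\otimes\bigl(e^{-U/z}\widetilde{\underline{R}}^{CY}(-z)^{*}\tilde p_\alpha\bigr)$, not $\sum_{\alpha}e^{-u^{\alpha}(\frac1w+\frac1z)}\,\widetilde{\underline{R}}^{CY}(-w)^{*}\tilde p_\alpha\otimes\widetilde{\underline{R}}^{CY}(-z)^{*}\tilde p_\alpha$. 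This matters decisively once you take the pairing: since $e^{U/w}$ is diagonal in the orthonormal basis $\{\tilde p_\alpha\}$, hence self-adjoint, and both factorizations carry the \emph{same} $U^{CY}$, one has $\bigl(e^{U/w}X,\,e^{-U/w}Y\bigr)^{CY}=(X,Y)^{CY}$. The exponentials cancel identically as operators, and no factors $e^{(u^\alpha-u^\beta)(\frac1w+\frac1z)}$ ever appear. The only surviving trace of the exponentials is the pair of terms $dU^{CY}/w$ and $dU^{CY}/z$ produced when $d$ hits them before they cancel; these diagonal matrices of one-forms are precisely the source of the $du^{\alpha}$ in the final answer.

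With the exponentials gone, the integrand has poles only of bounded order at $w=0$, $z=0$ and along $w+z=0$, the residue is computed by Taylor-expanding $1/(w+z)$ in $z/w$ or $w/z$, and the off-diagonal terms die for the elementary reason that $C_{\alpha\beta}(w):=\bigl(\underline{R}^{LG}(w)\widetilde{\underline{R}}^{CY}(-w)^{*}\bigr)_{\alpha\beta}=O(w)$ for $\alpha\neq\beta$, leaving nothing to compensate the at-most-simple poles in each variable. Your third step --- extracting residues of essential singularities $e^{a(\frac1w+\frac1z)}$, which are not even well defined without fixing an expansion convention, and then arguing that the off-diagonal contributions ``collapse'' via the flatness equations --- is a phantom problem created by the misplacement, and it is also the one step you leave unproved. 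As written the argument is incomplete precisely at its self-identified crux; repairing the diagonalization removes that crux entirely and reduces the proof to the short residue computation.
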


\begin{proof}
Inserting the factorizations of Proposition \ref{prop:factorizations} into the residue in Lemma \ref{lem:looplimit}, we obtain
\begin{align*}
dF_1^C(\tau^C)_L=\frac{1}{2}\lim_{\lambda\rightarrow 0} \Res_{w=0 \atop z=0}\Bigg(& d\frac{\sum_\alpha \underline R^{LG}(\ft,w)^*(e_\alpha)\otimes \underline R^{LG}(\ft,z)^*(e_\alpha)}{w+z}\\
&\hspace{1cm}+ \frac{\sum_\alpha dU^{CY}\underline R^{LG}(\ft,w)^*(e_\alpha)\otimes \underline R^{LG}(\ft,z)^*(e_\alpha) }{w(w+z)}\\
&\hspace{1cm}+\frac{\sum_\alpha \underline R^{LG}(\ft,w)^*(e_\alpha)\otimes dU^{CY}\underline R^{LG}(\ft,z)^*(e_\alpha) }{(w+z)z},\\
&\hspace{2cm}\frac{\sum_\alpha \widetilde{\underline R}^{CY}(\ft,-w)^*(\epsilon_\alpha)\otimes \widetilde{\underline R}^{CY}(\ft,-z)^*(\epsilon_\alpha)}{-w-z}\Bigg)^{CY}.
\end{align*}
Expanding the denominators as Taylor series in either $\frac{z}{w}$ or $\frac{w}{z}$, the residue is easily computed to yield
\[
dF_1^C(\tau^C)_L=\frac{1}{2}\lim_{\lambda\rightarrow 0} \sum_\alpha\left(\widetilde{\underline R}_1^{CY}(\ft)^*_{\alpha\alpha}-\underline R_1^{LG}(\ft)^*_{\alpha\alpha} \right)du^\alpha.
\]
The proposition follows from the fact that $\underline R^\bullet_1(x)^*=\underline R_1^\bullet(x)$, which follows easily from the properties that $R^\bullet(x,z)=1+\cO(z)$ and $\underline R^\bullet(x,z)\underline R^\bullet(x,-z)^*=1$.
\end{proof}

\subsection{Comparing Constants}

Since both $R_1^\bullet$ and $\underline{R}_1^\bullet$ are linear terms of R-matrices, they differ by at most an additive constant. The purpose of this subsection is to compare the constants in order to obtain the following improvement of Proposition \ref{prop:residue}.

\begin{proposition}\label{prop:residue2}
We have
\[
dF_1^C(\tau^C)_L=\lim_{\lambda\rightarrow 0}\frac{1}{2}\sum_\alpha\left(\widetilde{R}_1^{CY}(\ft)_{\alpha\alpha}-R_1^{LG}(\ft)_{\alpha\alpha}  \right)du^\alpha.
\]
\end{proposition}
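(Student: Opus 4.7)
\emph{Plan.} The strategy is to use the classification of $R$-matrices for semi-simple Frobenius manifolds to reduce Proposition \ref{prop:residue2} to Proposition \ref{prop:residue}, and then to check that the resulting correction vanishes in the non-equivariant limit. By the remark following Proposition \ref{prop:factorizations}, for each $\bullet \in \{CY, LG\}$ the matrices $\underline R^\bullet$ and $R^\bullet$ are $R$-matrices for the same semi-simple Frobenius manifold, so they differ by right multiplication by a diagonal matrix each of whose entries is a power series of the form $1 + \sum_{k \geq 0} a^{\bullet,\alpha}_{2k+1}\,z^{2k+1}$ with coefficients $a^{\bullet,\alpha}_{2k+1}$ constant in the deformation variable. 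Extracting the coefficient of $z$, the difference
\[
C^{\bullet,\alpha} := \underline R_1^\bullet(x)_{\alpha\alpha} - R_1^\bullet(x)_{\alpha\alpha}
\]
is a diagonal matrix of constants depending only on $\lambda$.

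Substituting this identity into Proposition \ref{prop:residue} and using $du^\alpha = \xi^\alpha\lambda\,du$ from Proposition \ref{prop:du}, the claim reduces to showing
\[
\lim_{\lambda \to 0}\lambda\sum_\alpha \bigl(\widetilde C^{CY,\alpha} - C^{LG,\alpha}\bigr)\xi^\alpha = 0.
\]
To access the constants, I would specialize the factorizations of Proposition \ref{prop:factorizations} at the reference point $\ft = 0$. On the LG side, $\tau^{LG}(0) = 0$ and $U^{LG}(0) = 0$ by the chosen normalization of canonical coordinates, so $S^{LG,\lambda}(0,z)$ is the identity and the factorization collapses to an explicit closed-form expression for $\underline R^{LG}(0,z)$ in terms of $\U^\lambda(-z)^{*}$ and $\Psi^{LG}(0)$; combined with the explicit formula for $R_1^{LG}$ from the earlier Proposition evaluated at $\ft = 0$ using $L^{LG}(0) = I_0^{LG}(0) = 1$, this determines $C^{LG,\alpha}$. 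A parallel computation on the CY side, after analytic continuation via Theorem \ref{thm:equivlgcy} and pushed through the intertwiner $M(\ft,z)$ of Proposition \ref{prop:Mmatrix} (whose $\ft = 0$ value $M(0,z) = R^\lambda(-z)\U_0^\lambda$ is explicit), determines $\widetilde C^{CY,\alpha}$.

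The main obstacle will be controlling the $\lambda$-asymptotics of these constants. I expect each $C^{\bullet,\alpha}$ to have at worst a simple pole at $\lambda = 0$, inherited from the Gamma-function asymptotics of $\U^\lambda(-z)$ and from the inverse powers of $\lambda$ in $\Psi^\bullet(0)$ predicted by Proposition \ref{prop:cob}. The $\mathbb{Z}/5$-symmetry cycling the canonical indices $\alpha$ (visible in both factorizations through the uniform $\xi^\alpha$-scaling of canonical coordinates) should force the polar parts of $\widetilde C^{CY,\alpha}$ and $C^{LG,\alpha}$ to coincide, so that $\widetilde C^{CY,\alpha} - C^{LG,\alpha}$ remains bounded as $\lambda \to 0$. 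The explicit factor of $\lambda$ in the displayed limit then annihilates the sum, completing the reduction to Proposition \ref{prop:residue}.
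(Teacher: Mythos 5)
Your reduction is correct and is in fact the paper's own strategy: since $\underline{R}^\bullet$ and $R^\bullet$ calibrate the same semisimple Frobenius manifold, the diagonal entries of their linear terms differ by constants $C^{\bullet,\alpha}$ in the deformation variable, and the proposition is equivalent to $\lim_{\lambda\to 0}\lambda\sum_\alpha(\widetilde C^{CY,\alpha}-C^{LG,\alpha})\xi^\alpha=0$. The gap is in your final step. The $\mathbb{Z}/5$-symmetry only constrains the $\alpha$-dependence \emph{within} each theory: it forces $C^{\bullet,\alpha}=c^\bullet/(\lambda\xi^\alpha)+O(1)$ for a single constant $c^\bullet$ per theory, so that $\lambda\sum_\alpha C^{\bullet,\alpha}\xi^\alpha\to 5c^\bullet$. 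It says nothing about whether $c^{CY}=c^{LG}$, and that equality is precisely the nontrivial content of the proposition: if the two residues differed, Propositions \ref{prop:residue} and \ref{prop:residue2} would differ by a nonzero multiple of $du$. So ``the polar parts should coincide by symmetry, hence the difference is bounded and killed by the factor of $\lambda$'' is not an argument; it assumes exactly what has to be proved. (Note also that boundedness of the difference is not the relevant mechanism anyway --- each individual sum $\lambda\sum_\alpha C^{\bullet,\alpha}\xi^\alpha$ tends to the \emph{nonzero} value $\tfrac{3}{4}$, and it is the cancellation of these two equal limits that makes the proposition true.)

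What is actually required is an explicit evaluation of both residues, and both come out to $c^\bullet=3/20$. On the CY side this is comparatively easy: at $\fq=0$ one checks $\underline{R}^{CY}(\fq=0,z)=1$ from the divisor equation and the asymptotics $u^{CY,\alpha}=\xi^\alpha\lambda\log\fq+O(\fq)$, while the closed formula for $R_1^{CY}$ gives $R_1^{CY}(\fq=0)_{\alpha\alpha}=-3/(20\lambda\xi^\alpha)$ from the $-\tfrac{3}{4}\log\fq$ term, whence $c^{CY}=3/20$. On the LG side one must write $\underline{R}^{LG}=-(\Psi^{LG})M(\ft,z)^*(\widetilde\Psi^{CY})^{-1}\widetilde{\underline R}^{CY}$, evaluate $\widetilde R_1^{CY}(\ft=0)_{\alpha\alpha}$ explicitly in terms of the Gamma-values entering $\U$ (this is the analytically continued object at the LG point, not the $\fq=0$ computation), and compute the trace contribution $(\U_+)_{01}+(\U_+)_{12}+(\U_+)_{23}$ from the Birkhoff factorization, using the reflection formula $\Gamma(x)\Gamma(1-x)=\pi/\sin(\pi x)$ to identify $(\U_+)_{23}=(\U_+)_{01}$; only after these cancellations does one land on $c^{LG}=3/20$. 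Your outline points at the right objects ($M(0,z)=R^\lambda(-z)\U_0^\lambda$, $\Psi^\bullet$ at the reference point, the $\ft=0$ specialization), but without carrying out these computations the proof is missing its essential step.
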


\begin{proof}
We prove Proposition \ref{prop:residue2} via a sequence of lemmas.

\begin{lemma}\label{lem:one}
We have
\[
\underline R^{CY}(\fq=0,z)=1,
\]
and
\[
\sum_\alpha \underline R^{CY}_1(\fq)_{\alpha\alpha}du^\alpha=\sum_\alpha R^{CY}_1(\fq)_{\alpha\alpha}du^\alpha+\frac{3}{4}du.
\]
\end{lemma}

\begin{proof}[Proof of Lemma]
By a standard application of the divisor equation and the localization isomorphism, we compute
\[
S^{CY,\lambda}(\tau^{CY},z)(\tilde p_\alpha)=\re^{\xi^\alpha\lambda\tau^{CY}/z}\tilde p_\alpha+\sum_{i \atop d>0}\re^{\tau^{CY}d}\left\langle\varphi_i\;\frac{\re^{\xi^\alpha\lambda\tau^{CY}/z}\tilde p_\alpha}{z-\psi}\right\rangle_{0,2,d}\varphi^i.
\]
Multiplying on the right by $\re^{-U^{CY}/z}$ and using the facts that $\tau^{CY}=\log(\fq)+\cO(\fq)$ and $u^{CY,\alpha}=\xi^{\alpha}\lambda\log(q)+\cO(q)$, we see that
\[
\left(S^{CY,\lambda}(\tau^{CY},z)\re^{-U^{CY}/z}\right)\bigg|_{\fq=0}(\tilde p_\alpha)=\tilde p_\alpha.\\
\]
Moreover, since $(\Psi^{CY})\big|_{\fq=0}$ is simply the change of basis from the $\{\varphi_i\}$ to $\{\tilde p_\alpha\}$, we see that, as a matrix,
\[
\underline R^{CY}(\fq=0,z)=\left(\Psi^{CY}S^{CY,\lambda}(\tau^{CY},z)\re^{-U^{CY}/z}\right)\bigg|_{\fq=0}=1.
\]
To prove the second statement, we note that $R^{CY}_1(\fq)$ and $\underline R^{CY}_1(\fq)$ are both linear terms of R-calibrations, so they only differ by an additive constant. Observing that 
\[
R^{CY}_1(\fq=0)_{\alpha\alpha}=-\frac{3}{20\lambda\xi^\alpha},
\] 
the computation above shows that
\begin{align*}
\sum_\alpha \underline R^{CY}_1(\fq)_{\alpha\alpha}du^\alpha&=\sum_\alpha \left(R^{CY}_1(\fq)_{\alpha\alpha}+\frac{3}{20\lambda\xi^\alpha}+1\right)du^\alpha\\
&=\left(\sum_\alpha \left(R^{CY}_1(\fq)_{\alpha\alpha}+\frac{3}{20\lambda\xi^\alpha}+1\right)\lambda\xi^\alpha\right) du\\
&=\sum_\alpha R^{CY}_1(\fq)_{\alpha\alpha}du^\alpha+\frac{3}{4}du.
\end{align*}
\end{proof}

\begin{lemma}\label{lem:two}
We have
\[
\widetilde{R}_1^{CY}(\ft=0)_{\alpha\alpha}=\frac{1}{5\lambda\xi^\alpha}\left(2\frac{\xi}{1+\xi}\frac{\Gamma^5\left(\frac{4}{5}\right)}{\Gamma^5\left(\frac{3}{5}\right)} +\frac{\xi(1+\xi)^3}{(1+\xi+\xi^2)^2}\frac{\Gamma^5\left(\frac{3}{5}\right)}{\Gamma^5\left(\frac{2}{5}\right)}\right)
\]
\end{lemma}

\begin{proof}[Proof of Lemma]
We know that
\[
\widetilde{R}_1^{CY}(\ft)_{\alpha\alpha}=\frac{1}{5\lambda\xi^\alpha}\frac{1}{L^{LG}}\frac{d}{d\ft}\left(-\frac{1}{4}\log(1-\ft^{-5}5^5)-4\log(\widetilde I_0^{CY}(\ft))-\log\left(-\frac{t}{5}\frac{d}{dt}\frac{\widetilde I_1^{CY}(\ft)}{\widetilde I_0^{CY}(\ft)}\right)+\frac{15}{4}\log(\ft) \right).
\]
Write
\[
\widetilde I_i^{CY}(\ft)=\frac{t}{5}\sum_{j=0}^3 b_{ij} I_j^{LG}(\ft)=\frac{t}{5}\left(b_{i0}+b_{i1}\ft+b_{i2}\frac{\ft^2}{2}+\cO(t^3) \right),
\]
where the coefficients $b_{ij}$ can be computed explicitly from the formula for $\U$ (see below, for example). Notice that the $\log(\ft)$ terms cancel and, disregarding constant terms in the derivative, we're left with
\begin{align*}
\widetilde{R}_1^{CY}(\ft)_{\alpha\alpha}=&\frac{1}{5\lambda\xi^\alpha}\frac{1}{L^{LG}}\frac{d}{d\ft}\bigg(-\frac{1}{4}\log(\ft^5-5^5)-4\log(b_{00}+b_{01}t+\dots)\\
&-\log\left(b_{00}(b_{00}b_{11}-b_{01}b_{10})+(b_{00}^2b_{12}-2b_{00}b_{01}b_{11}-b_{00}b_{02}b_{10}+2b_{01}^2b_{10})\ft+\dots \right)\bigg),
\end{align*}
where $+\dots$ denotes higher-order terms in $\ft$. Computing the derivative and setting $\ft=0$, we obtain
\[
\widetilde{R}_1^{CY}(\ft=0)_{\alpha\alpha}=\frac{1}{5\lambda\xi^\alpha}\left(\frac{b_{00}^2b_{12}+2b_{00}b_{01}b_{11}-b_{00}b_{02}b_{10}-2b_{01}^2b_{10}}{b_{00}(b_{01}b_{10}-b_{00}b_{11})} \right).
\]
Using the fact that, for $i=0,1$, 
\[
b_{ij}=\frac{(-1)^{j+1}(2\pi\ri)^{i+1}}{\Gamma^5\left(1-\frac{j+1}{5} \right)}\frac{\xi^{j+1}}{(1-\xi^{j+1})^{i+1}},
\]
the lemma follows by direct computation.
\end{proof}

\begin{lemma}\label{lem:three}
We have
\[
\lim_{\lambda\rightarrow 0}\sum_{\alpha}\lambda\xi^\alpha\underline  R_1^{LG}(\ft=0)_{\alpha\alpha}=\frac{3}{4}
\]
and
\[
\lim_{\lambda\rightarrow 0}\sum_\alpha \underline{R}^{LG}_1(\ft)_{\alpha\alpha}du^\alpha=\lim_{\lambda\rightarrow 0}\sum_{\alpha}R^{LG}_1(\ft)_{\alpha\alpha}du^\alpha+\frac{3}{4}du.
\]
\end{lemma}

\begin{proof}[Proof of Lemma]
Recall that
\[
\underline R^{LG}(\ft,z)=-(\Psi^{LG})M(\ft,z)^*(\widetilde \Psi^{CY})^{-1}\widetilde{\underline R}^{CY}(\ft,z), 
\]
so that
\[
\underline R_1^{LG}(\ft)_{\alpha\alpha}=-\left((\Psi^{LG})M_1(\ft)^*(\widetilde \Psi^{CY})^{-1}\right)_{\alpha\alpha}+\widetilde{\underline R}_1^{CY}(\ft)_{\alpha\alpha}.
\]
By the previous two lemmas, it suffices to prove that
\[
\lim_{\lambda\rightarrow 0}\sum_\alpha\lambda\xi^\alpha\left((\Psi^{LG})M_1(\ft)^*(\widetilde \Psi^{CY})^{-1}\right)_{\alpha\alpha}\bigg|_{\ft=0}=2\frac{\xi}{1+\xi}\frac{\Gamma^5\left(\frac{4}{5}\right)}{\Gamma^5\left(\frac{3}{5}\right)} +\frac{\xi(1+\xi)^3}{(1+\xi+\xi^2)^2}\frac{\Gamma^5\left(\frac{3}{5}\right)}{\Gamma^5\left(\frac{2}{5}\right)}.
\]
Let $\Psi_0^{LG}$ and $\widetilde\Psi_0^{CY}$ denote the specializations at $\ft=0$. Then by \eqref{eq:Mmatrixlinear} and \eqref{eq:Mmatrix}, we have
\begin{align*}
(\Psi^{LG}_0)M_1(0)^*(\Psi^{CY})^{-1}&=(\Psi_0^{LG})(R^\lambda(-z)\U_0^\lambda)^*_1(-\U_0^\lambda(\Psi_0^{LG})^{-1})\\
&=(\Psi_0^{LG})(\U_0^\lambda)^{-1}R^\lambda_1(\U_0^\lambda)(\Psi_0^{LG})^{-1}\\
&=(\Psi_0^{LG})(\U^\lambda_+)_1(\Psi_0^{LG})^{-1}.
\end{align*}
Therefore,
\begin{align*}
\sum_\alpha\lambda\xi^\alpha\left((\Psi_0^{LG})M_1(0)^*(\widetilde \Psi_0^{CY})^{-1}\right)_{\alpha\alpha}&=\mathrm{tr}\left(\diag(\lambda\xi^\alpha)(\Psi_0^{LG})(\U^\lambda_+)_1(\Psi_0^{LG})^{-1}\right)\\
&=\mathrm{tr}\left(\Lambda(\U^\lambda_+)_1 \right),
\end{align*}
where
\[
\Lambda:=(\Psi^{LG}_0)^{-1}\diag(\lambda\xi^\alpha)(\Psi_0^{LG})=\left(\begin{array}{ccccc}
0&0&0&0&\lambda^5\\
1&0&0&0&0\\
0&1&0&0&0\\
0&0&1&0&0\\
0&0&0&1&0
\end{array}
\right).
\]
Thus, the non-equivariant limit can be computed explicitly in terms of coefficients of $\U_+$:
\[
\lim_{\lambda\rightarrow 0}\sum_\alpha\lambda\xi^\alpha\left((\Psi_0^{LG})M_1(0)^*(\widetilde \Psi_0^{CY})^{-1}\right)_{\alpha\alpha}=(\U_+)_{01}+(\U_+)_{12}+(\U_+)_{23}.
\]
By expanding the gamma functions in terms of constants $C:=5/12$ and $E:=-40\zeta(3)/(2\pi\ri)^3$, Chiodo--Ruan \cite{CR} computed
\begin{equation}
\U(-z)=\begin{pmatrix}\
\frac{(-1)^{k+1}(2\pi \ri)}{\Gamma^5(1-\frac{k+1}{5})} \frac{\xi^{k+1}}{1-\xi^{k+1}} z^{k}& \\
\frac{(-1)^{k+1}(2\pi \ri)^2}{\Gamma^5(1-\frac{k+1}{5})} \frac{\xi^{k+1} }{(1-\xi^{k+1})^2} z^{k-1}& k=0,1,2,3\\
\frac{(-1)^{k+1}(2\pi \ri)^3}{\Gamma^5(1-\frac{k+1}{5})} \left(\frac{\xi^{k+1}(1+\xi^{k+1})}{2(1-\xi^{k+1})^3}+C\frac{\xi^{k+1}}{1-\xi^{k+1}}\right) z^{k-2}&
\\
\frac{(-1)^{k+1}(2\pi i)^4}{\Gamma^5(1-\frac{k+1}{5})} \left(\frac{\xi^{k+1}(1+4\xi^{k+1}+\xi^{2k+2})}{6(1-\xi^{k+1})^4}+C\frac{\xi^{k+1}}{(1-\xi^{k+1})^2}-E\frac{\xi^{k+1}}{1-\xi^{k+1}}\right)z^{k-3}&
\end{pmatrix}.
\end{equation}
By explicitly constructing $S(-z)$ in terms of elementary row operations, we have
\begin{align*}
S(-z)\U(-z)=&\left(\begin{matrix}
\frac{ -2\pi \ri   }{\Gamma^5(\frac{4}{5})} \frac{\xi}{1-\xi}  , &
\frac{ 2\pi \ri }{\Gamma^5(\frac{3}{5})} \frac{\xi^2}{1-\xi^2} z , &
\frac{  -2\pi \ri  }{\Gamma^5(\frac{2}{5})} \frac{\xi^3}{1-\xi^3} z^2 , &
\frac{ 2\pi \ri }{\Gamma^5(\frac{1}{5})}\frac{\xi^4}{1-\xi^4} z^3\\
  0&
\frac{ -(2\pi \ri)^2}{\Gamma^5(\frac{3}{5})} \frac{ \xi^3}{ (1-\xi^2)^2 }  , &
\frac{ (2\pi \ri)^2 }{\Gamma^5(\frac{2}{5})} \frac{\xi^4(1+\xi) }{(1-\xi^3)^2 } z  , &
\frac{- (2\pi \ri)^2 }{\Gamma^5(\frac{1}{5})} \frac{\xi^5 (1+\xi+\xi^2) }{ (1-\xi^4)^2 } z^2\\
  0&
 0&
\frac{ -(2\pi \ri)^3 }{\Gamma^5(\frac{2}{5})} \frac{\xi^6 }{ (1-\xi^3)^3 }    , &
\frac{ (2\pi \ri)^3 }{\Gamma^5(\frac{1}{5})} \frac{\xi^7 (1+\xi+\xi^2) }{ (1-\xi^4)^3 } z \\
  0&
  0&
 0&
\frac{ -(2\pi \ri)^4 }{\Gamma^5(\frac{1}{5})} \frac{\xi^{10}   }{ (1-\xi^4)^4 } \\
\end{matrix}\right),
\end{align*}
where, by definition, the right-hand side is $\U_0\U_+(-z)$. Therefore, we can compute
\[
(\U_+)_{01}=\frac{\xi}{1+\xi}\frac{\Gamma^5\left(\frac{4}{5}\right)}{\Gamma^5\left(\frac{3}{5}\right)},
\]
\[
(\U_+)_{12}=\frac{\xi(1+\xi)^3}{(1+\xi+\xi^2)^2}\frac{\Gamma^5\left(\frac{3}{5}\right)}{\Gamma^5\left(\frac{2}{5}\right)},
\]
and
\[
(\U_+)_{23}=\frac{\xi(1+\xi+\xi^2)(1-\xi^3)^3}{(1-\xi^4)^3}\frac{\Gamma^5\left(\frac{2}{5}\right)}{\Gamma^5\left(\frac{1}{5}\right)}.
\]
By applying the formula $\Gamma(x)\Gamma(1-x)=\pi/\sin(\pi x)$ and simplifying, it is straightforward to show that $(\U_+)_{23}=(\U_+)_{01}$, finishing the proof of the lemma.
\end{proof}

Proposition \ref{prop:residue2} now follows easily from Proposition \ref{prop:residue} and Lemmas \ref{lem:one} and \ref{lem:three}
\end{proof}

Combining Propositions \ref{prop:vertextail} and \ref{prop:residue2} with equations \eqref{eq:dfcy} and \eqref{eq:dflg}, we conclude that
\[
dF_1^C\left(\tau^C\right)= d\widetilde F_1^{CY}\left(\tau^C\right),
\]
which, by Corollary \ref{cor:reduction}, finishes the proof of Theorem \ref{thm:ellipticquantization}.

\bibliographystyle{alpha}
\bibliography{biblio}

\end{document}